\definecolor{Red}{rgb}{1,0,0}
\newtheorem{theorem}{Theorem}
\newtheorem{proposition}[theorem]{Proposition}
\newtheorem{cor}[theorem]{Corollary}
\newtheorem{lemma}[theorem]{Lemma}
\def \E {\mathbb{E}}
\def \P {\mathbb{P}}
\def \N  {\mathbb{N}}
\def \R  {\mathbb{R}}
\def \Z  {\mathbb{Z}}
\title[Velocity estimates for RWRE]{
	Velocity estimates for symmetric random walks 
	at  low ballistic disorder}
\author{Cl\'ement Laurent, Alejandro F. Ram\'\i rez, Christophe Sabot and Santiago Saglietti}
\email{clementelaurente@gmail.com, aramirez@mat.puc.cl, \newline \ \ \ \ \ sabot@math.univ-lyon1.fr, sasaglietti@mat.puc.cl}
\address{Institut Stanislas Cannes,
Facultad de Matem\'aticas, Pontificia Universidad Cat\'olica de Chile,
 Insitut Camille Jordan, Universit\'e de Lyon 1 \and
Facultad de Matem\'aticas, Pontificia Universidad Cat\'olica de Chile}
\thanks{Alejandro Ram\'\i rez and Santiago Saglietti have been
partially supported by Iniciativa Cient\'\i fica Milenio NC120062
and by Fondo Nacional de Desarrollo Cient\'\i fico
y Tecnol\'ogico grant 1141094. Cl\'ement Laurent has been 
partially supported by Fondo Nacional de Desarrollo Cient\'\i fico y
Tecnol\'ogico postdoctoral grant 3130353. Alejandro Ramirez and Christophe Sabot have been partially
suported by MathAmsud project ``Large scale behavior of stochastic systems''}
\date{\today}
\keywords{Random walk in random environment, Green function, asymptotic
expansion.}
\subjclass[2010]{60K37, 82D30, 82C41.}
\begin{document}

\begin{abstract}
	We derive asymptotic estimates for the velocity
	of random walks in random environments which are perturbations of the simple symmetric random walk but have a small local drift in a given direction. Our estimates 
complement previous results presented by Sznitman in \cite{Sz03} and are in the spirit of  expansions obtained by Sabot in \cite{Sa04}.
\end{abstract}
	
	\maketitle
	
\section{Introduction and Main Results}
	
	The mathematical derivation of explicit
	formulas for fundamental quantities of the model of random walk
	in a random environment is a challenging problem.
	For quantities like the velocity, the variance
	or the invariant measure of the environment seen from the random walk,
	few  results exist (see for example the review \cite{ST16} for the case
of Dirichlet environments, \cite{DR14} for one-dimensional
computations and also
	\cite{Sa04,CR16} for multidimensional expansions). In \cite{Sa04}, Sabot derived an asymptotic
	expansion for the velocity of the random walk at low disorder under
	the condition that the local drift of the perturbed
	random walk is linear in the perturbation parameter.
	As a corollary one can deduce that, in
	the case of perturbations of the simple symmetric random walk,
	the velocity is equal to the local drift with an error which
	is cubic in the perturbation parameter. In this article we
	explore up to which extent this expansion can be generalized
	to perturbations which are not necessarily linear in the perturbation
	parameter and we exhibit connections with previous results
	of Sznitman about ballistic behavior \cite{Sz03}.
	
	Fix an integer $d \geq 2$ and for $x=(x_1,\dots,x_d) \in \Z^d$ let $|x|:=|x_1|+\dots+|x_d|$ denote its $l^1$-norm. Let $V:=\{x\in\mathbb Z^d: |x|_1=1\}$ be the set of canonical vectors in $\Z^d$ and $\mathcal P$ denote the set of all probability vectors $\vec{p}=(p(e))_{e \in V}$ on $V$, i.e. such that $p(e)\geq 0$ for \mbox{all $e \in V$ and 
		also $\sum_{e\in V}p(e)=1$.} 
	Furthermore, let us consider the product space $\Omega:=\mathcal P^{\mathbb Z^d}$ 
	endowed with its Borel $\sigma$-algebra $\mathcal{B}(\Omega)$. We call any $\omega=(\omega(x))_{x \in \Z^d} \in \Omega$ an \textit{environment}. Notice
 that, for each $x\in\mathbb Z^d$, $\omega(x)$ is a probability vector on $V$, whose components we will denote by $\omega(x,e)$ for $e \in V$, i.e. $\omega(x)=(\omega(x,e))_{e \in V}$. 
	The {\it random walk in the environment} $\omega$
	starting from $x\in\Z^d$ is then defined as the Markov chain $(X_n)_{n \in \N_0}$ with
	state space $\Z^d$ which starts from $x$ and is given by the transition probabilities
	$$
	P_{x,\omega}(X_{n+1}=y+e|X_n=y)=\omega(y,e),
	$$
	for all $y\in\mathbb Z^d$ and $e\in V$. We will denote its law  by $P_{x,\omega}$.
	We assume throughout that the space of environments $\Omega$ is
	endowed with a probability measure $\P$, called the \mbox{{\it environmental law}.}
	We will call $P_{x,\omega}$  the {\it quenched law} of the random walk, and also refer to the semi-direct product \mbox{$P_x:= \P\otimes P_{x,\omega}$} defined on
	$\Omega\times{\mathbb Z}^{\mathbb N}$
	as the {\it averaged} or  {\it annealed law} of the random walk. In general, we will call the sequence $(X_n)_{n \in \N_0}$ under the annealed law a \textit{random walk in a random environment} (RWRE) with \mbox{environmental law $\P$.} Throughout the sequel, we will always assume that the random vectors $(\omega(x))_{x \in \Z^d}$ are \mbox{i.i.d. under $\P$.} Furthermore, we shall also assume that $\P$ is \textit{uniformly elliptic}, i.e. that there exits a constant $\kappa>0$ such that for all $x\in\mathbb Z^d$ and
	$e\in V$ one has
	$$
	\P(\omega(x,e)\ge\kappa)=1.
	$$
	
	Given $l\in\mathbb S^{d-1}$, we will say that our random walk $(X_n)_{n \in \N_0}$ is \textit{transient in direction
		$l$} 
	if
	$$
	\lim_{n\to\infty}X_n\cdot l=+\infty\quad P_0-a.s.,
	$$
	and say that it is {\it ballistic in direction $l$} if it satisfies the stronger condition
	$$
	\liminf_{n\to\infty}\frac{X_n\cdot l}{n}>0\qquad P_0-a.s.
	$$
	Any random walk which is ballistic with respect to some direction $l$ satisfies \mbox{a law of large numbers} (see \cite{DR14} for a proof of this fact), i.e. there exists a deterministic vector $\vec{v} \in \R^d$ with $\vec{v} \cdot l > 0$ such that
	$$
	\lim_{n \rightarrow +\infty} \frac{X_n}{n} = \vec{v} \qquad P_0-a.s..
	$$ This vector $\vec{v}$ is known as the \textit{velocity} of the random walk. 
	
	Throughout the following we will fix a certain direction, say $e_1:=(1,0,\dots,0) \in \mathbb{S}^{d-1}$ for example, and study transience/ballisticity only in this fixed direction. Thus, whenever we speak of transience or ballisticity of the RWRE it will be understood that it is with respect to this given direction $e_1$. However, we point out that all of our results can be adapted and still hold for any other direction. 
	
	For our main results, we will consider environmental laws $\P$ which are small
	perturbations of the simple symmetric random walk. More precisely, we will work with environmental laws $\P$ supported on the subset $\Omega_\epsilon \subseteq \Omega$ for $\epsilon > 0$ sufficiently small, where 
	\begin{equation}
	\label{epsilon-condition}
	\Omega_\epsilon:=
	\left\{\omega\in\Omega:\left|\omega(x,e)-\frac{1}{2d}\right|\le
	\frac{\epsilon}{4d} \,\text{ for all }x \in \Z^d \text{ and }e \in V\right\}.
	\end{equation}
	Notice that if $\P$ is supported on $\Omega_\epsilon$ for some $\epsilon \le 1$ then it is uniformly elliptic with constant
	\begin{equation}
	\label{eq:kappa}
	\kappa=\frac{1}{4d}.
	\end{equation} Since we wish to focus on RWREs for which there is ballisticity in direction $e_1$, it will be necessary to impose some further conditions on the environmental law $\P$. Indeed, 
 if for each $x \in \Z^d$ we define the {\it local drift of the RWRE at site $x$} as the random vector
	$$
	\vec{d}(x):=\sum_{e\in V}\omega(x,e)e
	$$ 
then, in order for the walk to be ballistic in direction $e_1$, one could expect that it is enough to have $\lambda:= \E(\vec{d}(0)) \cdot e_1 > 0$, where $\E$ here denotes the expectation with respect to the law $\P$ (notice that all local drift vectors $(\vec{d}(x))_{x \in \Z^d}$ are i.i.d. so that it suffices to consider only the local drift at $0$). However, as shown in \cite{BSZ03}, there are examples of environments for which there exists a direction in which the expectation of the local drift is positive but the velocity of the corresponding RWRE is negative. Therefore, we will need to impose stronger conditions on the local drift \mbox{to have ballisticity,} specifying exactly how small we allow $\lambda$ to be. In the sequel, we will consider two different conditions, the first of which is \textit{quadratic local drift condition}.
	
	\medskip
	\noindent \textbf{Quadratic local drift condition} (QLD). Given $\epsilon \in (0,1)$, we say that the environmental law $\P$ satisfies the quadratic local drift condition (QLD)$_{\epsilon}$ if $\P(\Omega_\epsilon)=1$ and, furthermore, 
	$$
	\lambda:=\E(\vec{d}(0)) \cdot e_1 \geq \epsilon^2.
	$$
	
	\medskip
	
	Our second condition, the \textit{local drift condition}, is weaker for dimensions $d \geq 3$.
	
	\medskip
	\noindent {\bf Local drift condition} (LD). Given $\eta,\epsilon \in (0,1)$, we say that
	an environmental law $\mathbb P$ satisfies 
	the local drift  condition (LD)$_{\eta,\epsilon}$ if  $\P(\Omega_\epsilon)=1$ and, furthermore, 
	
	\begin{equation}
	\label{lambda-definition}
	\lambda:=\E(\vec{d}(0))\cdot e_1 \ge
	\epsilon^{\alpha(d)-\eta},
	\end{equation}
	where
	\begin{equation}
	\label{epsilond}
	\alpha(d):=
	\begin{cases}
	2&{\rm if}\ d=2\\
	2.5&{\rm if}\ d=3\\
	3&{\rm if}\ d\ge 4.
	\end{cases}
	\end{equation}
	
	\medskip
	Observe that for $d=2$ and any $\epsilon \in (0,1)$ condition (LD)$_{\eta,\epsilon}$ implies (QLD)$_{\epsilon}$ for all $\eta \in (0,1)$, whereas if $d \geq 3$ and $\eta \in (0,\frac{1}{2})$ it is the other way round, (QLD)$_{\epsilon}$ implies (LD)$_{\eta,\epsilon}$. It is known that for every $\eta \in (0,1)$ there exists $\epsilon_0=\epsilon(d,\eta) > 0$ such that any RWRE with an environmental law $\P$ satisfying (LD)$_{\eta,\epsilon}$ for some $\epsilon \in (0,\epsilon_0)$ is ballistic. Indeed, for $d\ge 3$
	this was proved by Sznitman in \cite{Sz03} whereas the case $d=2$ was shown in \cite{R16} (and is also a consequence of Theorem \ref{theo:0} below). Therefore, any RWRE with an environmental law $\P$ which satisfies (LD)$_{\eta,\epsilon}$ for $\epsilon$ sufficiently small is such
	that $P_0$-a.s. the limit
	$$
	\vec{v}:=\lim_{n\to\infty}\frac{X_n}{n}
	$$
	exists and is different from $0$. Our first result is then the following.
	
\medskip
	\begin{theorem}
		\label{theorem1} Given any $\eta\in (0,1)$ and $\delta \in (0,\eta)$
		there exists some $\epsilon_0=\epsilon_0(d,\eta,\delta)\in (0,1)$ \mbox{such that}, for every $\epsilon \in (0,\epsilon_0)$ and any environmental law satisfying (LD)$_{\eta,\epsilon}$, the associated RWRE is ballistic with
		a velocity $\vec{v}$
		which verifies
		\begin{equation}
		\label{vel-expan}
		0 < \vec{v}\cdot e_1\le \lambda+ c_0\epsilon^{\alpha(d)-\delta}
		\end{equation} for some constant $c_0=c_0(d,\eta,\delta) > 0$. We abbreviate \eqref{vel-expan} by writing $0 < \vec{v} \cdot e_1 \leq \lambda + O_{d,\eta,\delta}(\epsilon^{\alpha(d)-\delta})$.
	\end{theorem}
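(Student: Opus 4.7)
The plan is to combine the regeneration-time structure of \cite{Sz03} with a perturbative analysis of the velocity around the simple symmetric random walk (SSRW), in the spirit of \cite{Sa04}. For $\epsilon$ small enough, $(\text{LD})_{\eta,\epsilon}$ places us in the ballistic regime of \cite{Sz03} (for $d\ge 3$) and of \cite{R16} (for $d=2$), so $\vec v$ exists with $\vec v\cdot e_1>0$, and the renewal structure in direction $e_1$ has sufficient integrability, yielding
\begin{equation*}
\vec v\cdot e_1 \;=\; \frac{\hat E_0[X_{\tau_1}\cdot e_1]}{\hat E_0[\tau_1]},
\end{equation*}
with $\tau_1$ the first regeneration time and $\hat E_0$ the annealed law conditioned on a regeneration at time $0$.

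To extract the main term $\lambda$, I would use the martingale $M_n:= X_n - \sum_{k<n}\vec d(X_k,\omega)$ under the quenched law. Optional stopping at $\tau_1$ and averaging give
\begin{equation*}
\hat E_0[X_{\tau_1}\cdot e_1] - \lambda\, \hat E_0[\tau_1] \;=\; \hat E_0\!\left[\sum_{k=0}^{\tau_1-1}\xi(X_k)\right] \;=\; \sum_{x\in\Z^d}\hat E_0\bigl[\xi(x)L_x\bigr],
\end{equation*}
with centered i.i.d.\ $\xi(x):= \vec d(x,\omega)\cdot e_1 - \lambda$ of uniform size $O(\epsilon)$ and $L_x:=\#\{k<\tau_1:\,X_k=x\}$ the local time at $x$. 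The theorem then reduces to establishing $\sum_x \hat E_0[\xi(x)L_x] \le c_0\,\epsilon^{\alpha(d)-\delta}\,\hat E_0[\tau_1]$.

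The core difficulty is the positive correlation between $\xi(x)$ and $L_x$: the walk tends to spend more time at sites whose local drift points towards $e_1$. I would handle this by a resampling/tilting argument in the spirit of Sabot: resample $\omega(x)$ alone while freezing the environment elsewhere, and differentiate the conditional expectation of $L_x$ to first order in $\xi(x)$. The linear sensitivity is captured by an SSRW Green-function factor at $x$ inside the regeneration slab, leading to a bound of the shape $\hat E_0[\xi(x)L_x] \lesssim \epsilon^2 \cdot (\text{Green-function factor at }x)$. Summing over $x$ in the slab of depth $L\sim 1/\lambda$, the resulting Green-function sum is logarithmic for $d=2$, of order $L^{1/2}$ for $d=3$, and uniformly bounded for $d\ge 4$, which matches the three values of $\alpha(d)$ through the relation $\lambda\ge \epsilon^{\alpha(d)-\eta}$.

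The hard step is the quantitative, dimension-sensitive Green-function analysis arising from the resampling argument, together with its propagation up to the stopping time $\tau_1$. One needs polynomial tail control on $\tau_1$ (with arbitrarily large degree for $\epsilon$ small), and uniform moment control of $X_{\tau_1}\cdot e_1$; both follow from the strong ballisticity consequences of $(\text{LD})_{\eta,\epsilon}$ (via Sznitman's condition $(T')$ in $d\ge 3$ and the analogous statements of \cite{R16} in $d=2$). These in turn allow one to absorb the $\epsilon^{-\eta}$ loss coming from $\lambda^{-1}$ and any residual logarithmic factors into the $\epsilon^{-\delta}$ slack present in the statement, thus delivering the bound $\vec v\cdot e_1 \le \lambda + c_0\epsilon^{\alpha(d)-\delta}$.
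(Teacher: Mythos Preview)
Your outline is a genuinely different strategy from the paper's and, as written, has a real gap at its core step. The paper never touches regeneration times for the upper bound. Instead it proves
\[
\liminf_{n\to\infty}\frac{E_0(T_n)}{n}\ \ge\ \frac{1}{\lambda}+\frac{1}{\lambda^2}\,O_{d,\eta,\delta}\bigl(\epsilon^{\alpha(d)-\delta}\bigr)
\]
via a multi-scale renormalization: a seed estimate on boxes of side $NL$ with $L=2[\theta\epsilon^{-1}]$ and $N=L^3$, where Sznitman's Green-function bounds in the slab $U_L$ (Propositions~3.1--3.2 of \cite{Sz03}) control $G_U[\vec d\cdot e_1](0,\omega)$ and $E_{0,\omega}(T_U)$, and then a hierarchy of good/bad boxes propagates these estimates to all scales. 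The crucial scale is $L\sim\epsilon^{-1}$, not $1/\lambda$; at this scale the environment is still a perturbation of SSRW and the variance of the Green-function martingale is exactly what produces the three values of $\alpha(d)$. Combining with $\lim_n E_0(T_n)/n=1/(\vec v\cdot e_1)$ (which follows from $(P)_K$, itself derived from $(\text{LD})_{\eta,\epsilon}$ via the effective criterion) and inverting yields the theorem.

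The gap in your proposal is the resampling step. You want $\hat E_0[\xi(x)L_x]\lesssim \epsilon^2\cdot(\text{Green factor})$ by differentiating $L_x$ in $\omega(x)$ with the rest of the environment frozen. But $L_x=\#\{k<\tau_1:X_k=x\}$ is a local time up to the \emph{regeneration time}, and $\tau_1$ depends on $\omega(x)$ through the entire history of the walk past $x$: changing $\omega(x)$ can destroy or create regeneration levels, so the sensitivity of $L_x$ is not a clean Green-function in any fixed slab. Your heuristic ``slab of depth $L\sim 1/\lambda$'' is neither the true regeneration scale nor the scale at which the perturbative SSRW comparison is valid; under $(\text{LD})_{\eta,\epsilon}$ one has $1/\lambda\le \epsilon^{-(\alpha(d)-\eta)}$, which for $d\ge 3$ is much larger than $\epsilon^{-1}$, and at that width the SSRW Green-function bounds you invoke are no longer available (this is precisely why the paper works at scale $\epsilon^{-1}$ and then renormalizes). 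Finally, the appeal to $(T')$ gives polynomial tails for $\tau_1$, but the constants in those tails are not shown to be uniform in $\epsilon$ in the way you need to absorb the $\lambda^{-1}$ losses. Without a concrete mechanism replacing the renormalization---either a fixed-geometry Kalikow/Green-function argument as in \cite{Sa04}, or a quantitative control of how $\tau_1$ varies under single-site resampling---the inequality $\sum_x\hat E_0[\xi(x)L_x]\le c_0\epsilon^{\alpha(d)-\delta}\hat E_0[\tau_1]$ remains unproved.
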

	
\medskip
	
	Our second result is concerned with RWREs with an environmental law satisfying (QLD).

\medskip
	\begin{theorem}\label{theo:0}
		There exists $\epsilon_0 \in (0,1)$ depending only on the dimension $d$ such that for all $\epsilon \in (0,\epsilon_0)$ and any environmental law satisfying (QLD)$_\epsilon$, the associated RWRE is ballistic with a velocity $\vec{v}$ which verifies 
		$$
		|\vec{v} \cdot e_1 - \lambda| \leq \frac{\epsilon^2}{d}.
		$$
	\end{theorem}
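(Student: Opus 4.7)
The plan combines the ballisticity supplied by Theorem~\ref{theorem1} (where applicable) with a perturbative analysis of the annealed expected displacement in terms of the local drift.

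For ballisticity, in dimension $d \geq 3$, since $\alpha(d) \geq 2.5$ and $\lambda \geq \epsilon^2$, condition (QLD)$_\epsilon$ implies (LD)$_{\eta,\epsilon}$ for any $\eta \in (0, \alpha(d)-2)$, so Theorem~\ref{theorem1} applies directly and yields ballisticity together with the estimate $\vec v \cdot e_1 \leq \lambda + c_0 \epsilon^{\alpha(d)-\delta}$; choosing $\delta$ small enough bounds this by $\lambda + \epsilon^2/d$ for all $\epsilon$ below a threshold, establishing the upper half of the claim in these dimensions. In dimension $d=2$, however, (QLD)$_\epsilon$ is strictly weaker than (LD)$_{\eta,\epsilon}$ and Theorem~\ref{theorem1} does not apply, so ballisticity must be established within the proof itself, consistent with the introduction's remark that Theorem~\ref{theo:0} provides an independent ballisticity proof when $d=2$.

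The core identity, valid in all dimensions, comes from the quenched martingale $M_n := X_n \cdot e_1 - \sum_{k=0}^{n-1} \vec d(X_k)\cdot e_1$, which has bounded increments. Taking annealed expectation and passing to the time-averaged limit (using ballisticity) yields
\[
\vec v \cdot e_1 = \lim_{n\to\infty}\frac{1}{n}\sum_{k=0}^{n-1} E_0[\vec d(X_k)\cdot e_1],
\]
so it suffices to show $|E_0[\vec d(X_k)\cdot e_1] - \lambda| \leq \epsilon^2/d$ uniformly in $k$. I would obtain this by splitting the annealed probability $P_0(X_k = y)$ according to whether $y$ is being visited for the first time at step $k$ or earlier: on the first-visit event, the walk's trajectory up to time $k$ is independent of $\omega(y)$, so integrating out $\omega(y)$ produces exactly $\lambda$ after summation in $y$; the contribution from trajectories that revisit $y$ is then bounded by a product of the perturbation amplitude $\epsilon/(4d)$, the local drift scale $|\vec d(y)\cdot e_1| \leq \epsilon/(2d)$, and suitable self-intersection estimates derived from the Green function of the unperturbed SSRW.

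The main obstacle will be extracting the precise constant $1/d$ rather than an unspecified $c_0$, which will require carefully exploiting the symmetry of the SSRW together with the explicit bounds $|\omega(x,e)-1/(2d)| \leq \epsilon/(4d)$ and $\kappa = 1/(4d)$ to turn loose inequalities into exact cancellations. A secondary obstacle is the case $d=2$, where the SSRW Green function is infinite: the self-intersection bound must be implemented with a truncation scheme (for instance via regeneration times), whose existence will itself need to be established in the course of proving ballisticity in that dimension.
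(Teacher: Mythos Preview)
Your route is genuinely different from the paper's and, as stated, has real gaps.

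The paper never touches first-visit decompositions or SSRW Green functions. Instead it works through Kalikow's auxiliary walk: for any connected $B\subsetneq\Z^d$ and $x\in B$ one defines $\omega_B^x(y,e)=\E[g_B(x,y,\omega)\omega(y,e)]/\E[g_B(x,y,\omega)]$, and the key computation (Proposition~\ref{prop:kalibound}) shows directly that the Kalikow drift satisfies $|\vec d_{B,x}(y)\cdot e_1-\lambda|\le \epsilon^2/d$ for every $y\in B$. The trick is to rewrite $\E[g_B(x,y,\omega)]$ via the ratio $f_{B,x}(y,y+e,\omega)=P_{y+e,\omega}(T_B\le H_y)/P_{x,\omega}(H_y<T_B)$, which is \emph{independent of $\omega(y)$}; then one replaces the prefactors $\omega(y,e)$ by $\tfrac{1}{2d}(1\pm\tfrac{\epsilon}{2})$, factors the independent piece out of the expectation, and the constant $\epsilon^2/d$ drops out of the arithmetic combined with $|\vec d\cdot e_1|\le\epsilon/(2d)$. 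This single estimate immediately yields Kalikow's condition $\varepsilon_{\mathcal K}\ge\lambda-\epsilon^2/d>0$, hence $(T)$ and ballisticity in \emph{every} dimension $d\ge 2$ with no separate treatment of $d=2$ and no Green-function transience required. The velocity bound then follows by applying optional stopping to the Kalikow walk on half-spaces and the LLN for $E_0(T_n)/n$.

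Your proposal, by contrast, leaves two substantive holes. First, the $d=2$ ballisticity is simply deferred (``must be established within the proof itself''), and invoking regeneration times is circular since their existence is what you are trying to prove; the paper's Kalikow-condition argument handles this cleanly and you are missing that idea. Second, your claim that the revisit contribution factors as $(\epsilon/4d)\times(\epsilon/2d)\times(\text{self-intersections})$ is not justified: what you actually need to control is $\sum_y\mathrm{Cov}\bigl(\vec d(y)\cdot e_1,\,P_{0,\omega}(X_k=y)\bigr)$, and the sensitivity of the return probability $P_{y,\omega}(X_m=y)$ to $\omega(y)$ is of order $\epsilon$ times the \emph{number of returns}, not a single factor of $\epsilon$. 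Even in $d\ge 3$ this gives $C(d)\epsilon^2$ with a dimension-dependent Green-function constant, not $\epsilon^2/d$; in $d=2$ the sum diverges. So the precise constant $1/d$, which is the whole content of the theorem beyond ballisticity, does not emerge from your decomposition without an additional idea --- and that idea is exactly the Kalikow representation.
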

	
\medskip

	 Combining both results we immediately obtain the following corollary. 

\medskip
	
	\begin{cor}\label{cor:main3} Given $\delta \in (0,1)$ there exists some $\epsilon_0=\epsilon_0(d,\delta)\in (0,1)$ such that, for all $\epsilon \in (0,\epsilon_0)$ and any environmental law satisfying (QLD)$_{\epsilon}$, the associated RWRE is ballistic with
		a velocity $\vec{v}$
		which verifies
		$$
		\lambda - \frac{\epsilon^2}{d} \leq \vec{v} \cdot e_1 \leq \lambda + O_{d,\delta}(\epsilon^{\alpha(d)-\delta}).
		$$
	\end{cor}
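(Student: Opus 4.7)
The plan is to obtain the corollary by combining the two theorems already proved in the paper, with a little case analysis to handle the regimes in which Theorem \ref{theorem1} does not directly apply. The lower bound $\lambda - \epsilon^2/d \le \vec v \cdot e_1$ is an immediate consequence of the inequality $|\vec v \cdot e_1 - \lambda|\le \epsilon^2/d$ in Theorem \ref{theo:0}, which is available under (QLD)$_\epsilon$ alone. So the only real work is in deducing the upper bound $\vec v\cdot e_1 \le \lambda + O_{d,\delta}(\epsilon^{\alpha(d)-\delta})$ from (QLD)$_\epsilon$.

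For the upper bound, the strategy is to feed Theorem \ref{theorem1} with a hypothesis of the form (LD)$_{\eta,\epsilon}$ for some $\eta\in(\delta,1)$, which requires that (QLD)$_\epsilon$ imply (LD)$_{\eta,\epsilon}$ for such an $\eta$. Since the implication requires $\lambda\ge\epsilon^{\alpha(d)-\eta}$ and (QLD)$_\epsilon$ gives $\lambda\ge\epsilon^2$, a sufficient condition is $\alpha(d)-\eta\ge 2$, i.e.\ $\eta\le\alpha(d)-2$. I would therefore split into cases depending on the dimension and the size of $\delta$:

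\smallskip
\noindent\textbf{Case 1: $d=2$, or $d\ge 3$ with $\delta\ge\alpha(d)-2$.} Here $\alpha(d)-\delta\le 2$, so $\epsilon^{\alpha(d)-\delta}\ge\epsilon^2$ for $\epsilon\in(0,1)$, and the bound $\vec v\cdot e_1-\lambda\le\epsilon^2/d$ from Theorem \ref{theo:0} is already stronger than what the corollary demands (the factor $1/d$ can be absorbed into the $O_{d,\delta}$-constant). No appeal to Theorem \ref{theorem1} is needed.

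\smallskip
\noindent\textbf{Case 2: $d\ge 3$ with $\delta<\alpha(d)-2$.} Pick any $\eta$ in the non-empty interval $(\delta,\alpha(d)-2)$; for such $\eta$ we have $\alpha(d)-\eta>2$, and (QLD)$_\epsilon$ yields $\lambda\ge\epsilon^2\ge\epsilon^{\alpha(d)-\eta}$, i.e.\ (LD)$_{\eta,\epsilon}$ holds. Applying Theorem \ref{theorem1} to this $(\eta,\delta)$ produces exactly the desired upper bound on $\vec v\cdot e_1$, with a constant $c_0=c_0(d,\eta,\delta)$ that, having fixed the choice of $\eta$ as a function of $d$ and $\delta$, becomes a constant depending only on $d$ and $\delta$.

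\smallskip
Taking $\epsilon_0(d,\delta)$ to be the minimum of the thresholds supplied by Theorem \ref{theo:0} and by Theorem \ref{theorem1} (evaluated at the $\eta$ chosen above) finishes the proof. I do not anticipate any real obstacle: the argument is entirely a matter of bookkeeping, its only content being the observation that for the exponents $\alpha(d)$ in \eqref{epsilond} the quadratic assumption $\lambda\ge\epsilon^2$ automatically dominates the weaker drift condition (LD)$_{\eta,\epsilon}$ whenever $\alpha(d)-\eta\ge 2$, and that in the complementary regime the inequality one has to prove is already implied by the sharper bound of Theorem \ref{theo:0}.
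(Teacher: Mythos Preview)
Your proposal is correct and follows exactly the route the paper indicates: the paper simply states that the corollary is obtained ``immediately'' by combining Theorems~\ref{theorem1} and~\ref{theo:0}, and your case analysis is precisely the bookkeeping needed to make that combination rigorous. The only detail the paper makes explicit elsewhere (in the paragraph following the statements of the theorems) is the observation that for $d\ge 3$ and $\eta\in(0,\tfrac12)$ condition (QLD)$_\epsilon$ implies (LD)$_{\eta,\epsilon}$, which is the same fact you invoke in Case~2.
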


\medskip
	
	Observe that for dimension $d=2$ all the information given by Theorem \ref{theorem1} and Corollary \ref{cor:main3} is already contained in \mbox{Theorem \ref{theo:0},} whereas this is not so for dimensions $d \geq 3$. To 
 understand better the meaning of our results, let us give some background. First, 
	for $x \in \Z^d$ and $e \in V$ let us rewrite our weights $\omega(x,e)$ as
	\begin{equation}\label{eq:omega}
	\omega(x,e)=\frac{1}{2d} + \epsilon \xi_\epsilon(x,e),
	\end{equation} where
	$$
	\xi_\epsilon(x,e):=\frac{1}{\epsilon}\left(\omega(x,e)-\frac{1}{2d}\right).
	$$ Notice that if $\P(\Omega_\epsilon)=1$ then $\P$-almost surely we have $|\xi_\epsilon(x,e)|\leq \frac{1}{4d}$ for all $x \in \Z^d$ and $e \in V$. 
In \cite{Sa04}, Sabot considers a fixed environment $p_0 \in \Omega$ together with an i.i.d. sequence of bounded random vectors $\xi=(\xi(x))_{x \in \Z^d} \subseteq [-1,1]^V$ where each $\xi(x)=(\xi(x,e))_{e \in V}$ satisfies $\sum_{e\in V}\xi(x,e)=0$. Then, he defines for
	each $\epsilon>0$ the
	random environment $\omega$ on any $x\in\mathbb Z^d$ and $e\in V$ as
	$$
	\omega(x,e):=p_0(e)+\epsilon\xi(x,e).
	$$ In the notation of \eqref{eq:omega}, this
	corresponds to choosing $p_0(e)=\frac{1}{2d}$ and  $\xi_\epsilon(x,e):=\xi(x,e)$
	 not depending on
	$\epsilon$. 
	Under the assumption that the local drift associated to
	this RWRE does not vanish, it satisfies Kalikow's
	condition \cite{K81} and thus it has a non-zero velocity $\vec{v}$.
	Sabot then proves that this velocity
	satisfies the following expansion: for any small
	$\delta>0$ there exists some $\epsilon_0=\epsilon_0(d,\delta)>0$ such that for any $\epsilon \in (0,\epsilon_0)$
	one has that
	
\begin{equation}
\label{sabotexp2}
	\vec{v}= \vec{d}_0+
	\epsilon \vec{d}_1+\epsilon^2 \vec{d}_2+O_{d,\delta}\left(\epsilon^{3-\delta}\right),
\end{equation}
	where
	$$
	\vec{d}_0:=\sum_{e\in V}p_0(e)e,\qquad \vec{d}_1:=\sum_{e\in V}\mathbb E[\xi(0,e)]e,
	$$
	and
	$$
	\vec{d}_2:=\sum_{e \in V}\left(\sum_{e'\in V}C_{e,e'}J_{e'}\right) e,
	$$
	with
	$$
	C_{e,e'}:=\text{Cov}(\xi(0,e),\xi(0,e'))\qquad{\rm and}\qquad
	J_e:=g_{p_0}(e,0)-g_{p_0}(0,0).
	$$
	Here $g_{p_0}(x,y)$ denotes the Green's function of a random
	walk with jump kernel $p_0$. It \mbox{turns
		out that} for the particular case in which $p_0$
	is the jump kernel of a simple symmetric random walk (which is
	the choice we make in this article), we have that $\vec{d}_0=0$
	and also $\vec{d}_2=0$. In particular, for this case we have $\lambda= \epsilon \vec{d}_1 \cdot e_1=O(\epsilon)$ and 
	\begin{equation}
\label{sabotexp}
	\vec{v} \cdot e_1 = \lambda + O_{d,\delta}\left(\epsilon^{3-\delta}\right).
	\end{equation}
	Even though this expansion was only shown valid in the regime $\lambda = O(\epsilon)$, from it one can guess that, at least at a formal level, the random walk should be ballistic whenever $\lambda\ge \epsilon^{3-\eta}$ for\mbox{ any $\eta>\delta$.} 
	This was established previously by Sznitman from \cite{Sz03} for dimensions $d\ge 4$, but remains open for dimensions $d=2$ and $d=3$. In this context, our results show that
	under the drift condition (LD), which is always weaker than the $\lambda=O(\epsilon)$ assumption in \cite{Sa04}, for $d=2$ the random walk is indeed ballistic and the expansion \eqref{sabotexp} is still valid up to the second order (Theorem \ref{theo:0}), whereas for $d \geq 3$ we show that at least an upper estimate 
compatible with the right-hand side of \eqref{sabotexp}
 holds for the velocity (Theorem \ref{theorem1}).
	
	The proof of Theorem \ref{theorem1} is rather different from
the proof of the velocity expansion (\ref{sabotexp2}) of \cite{Sa04}, and is based on a mixture of
	renormalization methods together with Green's functions estimates,
	inspired in methods presented in \cite{Sz03,BDR14}.
	As a first step, one shows that the \mbox{averaged velocity} of the
	random walk at distances of order $\epsilon^{-4}$ is precisely
	equal to the average of the \mbox{local drift} with an error of order
	$\epsilon^{\alpha(d)-\delta}$. To do this, essentially we show
	that a right approximation for the behavior of the random walk
	at distances $\epsilon^{-1}$ is that of a simple symmetric random walk,
	so that one has to find a good estimate for the probability to move
	to the left or to the right of a rescaled random walk moving on a
	grid of size $\epsilon^{-1}$. This last estimate is obtained
	through a careful approximation of the Green's function
	of the random walk, which involves comparing it with its average
by using a martingale method. This is a crucial step which explains
	the fact that one loses precision in the error of the velocity in
	dimensions $d=2$ and $d=3$ compared with $d\ge 4$. As a final
	result of these computations, we obtain that the polynomial
	condition of \cite{BDR14} holds. In the second step, we
	use a renormalization method to derive the upper bound for the velocity, using the polynomial condition proved in the first step as a seed estimate.
	The proof of Theorem \ref{theo:0} is somewhat simpler, and is based
	on a generalization of Kalikow's formula proved in \cite{Sa04} and a careful application of Kalikow's criteria for ballisticity.
	
	The article is organized as follows. In Section 2 we introduce the general notation and
	establish some preliminary facts about the RWRE model, including some useful
	Green's function estimates. In Section 3, we prove Theorem \ref{theo:0}.
	In Section 4, we obtain the velocity estimates for distances of
	order $\epsilon^{-4}$ which is the first step in the proof
	of Theorem \ref{theorem1}. Finally, in Section 5 we finish
	the proof of Theorem \ref{theorem1} through the renormalization argument
	described above.
	
\section{Preliminaries}\label{section2}

In this section we introduce the general notation to be used throughout the article and also review some basic facts about RWREs which we shall need later.

\subsection{General notation}\label{sec:GN}
Given any subset $A\subset\mathbb Z^d$, we define
its (outer) boundary as
$$
\partial A:=\{x \in \Z^d - A : |x-y|=1 \text{ for some }y \in A\}.
$$
Also, we define the first exit
time of the random walk from $A$ as
$$
T_A:=\inf\{n\ge 0: X_n\notin A\}.
$$ In the particular case in which $A=\{b\} \times \Z^{d-1}$ for some $b \in \Z$, we will write $T_b$ instead of $T_A$, i.e.
$$
T_b:=\inf \{ n \geq 0 : X_n \cdot e_1 = b\}.
$$
Throughout the rest of this paper $\epsilon>0$ will be 
treated as a fixed variable. Also, we will denote generic constants by $c_1,c_2,\dots$.
However, whenever we wish to highlight the dependence of any of
these constants on the dimension $d$ or on $\eta$,
we will write for example $c_1(d)$ or $c_1(\eta,d)$ \mbox{instead
	of $c_1$.}
Furthermore, for the sequel we will fix a constant $\theta \in (0,1)$ to be determined later and define
\begin{equation}
\label{defL}
L:=2[\theta \epsilon^{-1}]
\end{equation}
where $[\cdot]$ denotes the (lower) integer part and also
\begin{equation}
\label{defN}
N:=L^3,
\end{equation}
which will be used as length quantifiers. 
In the sequel we will often work with slabs and \mbox{boxes in $\Z^d$,} which we introduce now. For each $M \in \N$, $x \in \Z^d$ and $l\in\mathbb S^{d-1}$ we define the slab 
\begin{equation}\label{eq:slab}
U_{l,M}(x):=\left\{y\in\mathbb Z^d: -M \leq (y-x)\cdot l < M
\right\}.
\end{equation} 
Whenever $l=e_1$ we will suppress $l$ from the notation and write $U_M(x)$ instead. Similarly, whenever $x=0$ we shall write $U_M$ instead of $U_M(0)$ and abbreviate $U_L(0)$ simply as $U$ for $L$ as defined \eqref{defL}.
Also, for each $M \in \N$ and $x\in\mathbb Z^d$, we define the box
\begin{equation}
\label{beeme}
B_{M}(x):=\left\{y\in\mathbb Z^d: - \frac{M}{2} < (y-x) \cdot e_1 < M \text{ and }
|(y-x)\cdot e_i| < 25M^3 \text{ for } 2\le i\le d\right\}
\end{equation}
together with its {\it frontal side}
$$
\partial_+B_{M}(x):=\left\{y\in \partial B_{M,M'}(x): (y-x)\cdot e_1 \geq M\right\},
$$ its \textit{back side}
$$
\partial_- B_M(x):= \left\{y\in \partial B_{M,M'}(x): (y-x)\cdot e_1 \leq -\frac{M}{2}\right\},
$$ its \textit{lateral side}
$$
\partial_l B_M(x):= \left\{y\in \partial B_{M,M'}(x): |(y-x)\cdot e_i| \geq 25M^3 \text{ for some }2 \leq i \leq d\right\},
$$ and, finally, its \textit{middle-frontal part}
$$
B^*_M(x):= \left\{ y \in B_{M}(x) : \frac{M}{2} \leq (y-x) \cdot e_1 < M\,,\,|(y-x) \cdot e_i| < M^3 \text{ for }2 \leq i \leq d\right\}
$$ together with its corresponding \textit{back side}
$$
\partial_- B^*_M(x):=\left\{ y \in B^*_M(x) : (y-x)\cdot e_1 = \frac{M}{2}\right\}.
$$
As in the case of slabs, we will use the simplified notation 
$B_{M}:=B_{M}(0)$ and also $\partial_i B_M:=\partial_i B_M(0)$ for $i=+,-,l$, with the analogous simplifications for $B_M^*(0)$ and its back side. 

\subsection{Ballisticity conditions} \label{sec:bal}
For the development of the proof of our results, it will be important to recall a few ballisticity conditions, namely, Sznitman's $(T)$ and $(T')$ conditions introduced
in \cite{Sz01,Sz02} and also the
polynomial condition presented in \cite{BDR14}. We do this now, considering only ballisticity in direction $e_1$ for simplicity.

\medskip
\noindent \textbf{Conditions $(T)$ and $(T')$}. Given $\gamma\in (0,1]$ we say that condition $(T)_\gamma$ is satisfied (in direction $e_1$) if there exists a neighborhood $V$ of $e_1$ in $\mathbb S^{d-1}$ such that for every $l'\in V$
one has that
\begin{equation}\label{eq:condtgamma}
\limsup_{M\to+\infty}\frac{1}{M^\gamma}\log P_0\left(X_{T_{U_{l',M}}}\cdot l'<0\right)<0.
\end{equation}
As a matter of fact, Sznitman originally introduced a condition $(T)_{\gamma}$ which is slightly different from the one presented here, involving an asymmetric version of the slab $U_{l',M}$ in \eqref{eq:condtgamma} and an additional parameter $b > 0$ which modulates the asymmetry of this slab. However, it is straightforward to check that Sznitman's original definition is equivalent to ours, so we omit it for simplicity.

Having defined the conditions $(T)_\gamma$ for all $\gamma \in (0,1]$, we will say that:
\begin{itemize}
	\item $(T)$ is satisfied (in direction $e_1$) if $(T)_1$ holds,
    \item $(T')$ is satisfied (in direction $e_1$) if $(T)_\gamma$ holds for all $\gamma\in (0,1)$. 
\end{itemize}
It is clear that $(T)$ implies $(T')$, although it is not yet known whether the other implication holds.

\medskip
\noindent \textbf{Condition $(P)_K$}. Given $K \in \N$ we say that the polynomial condition $(P)_K$ holds (in direction $e_1$) if for some $M \ge M_0$ one has that
$$
\sup_{x \in B^*_{M}}P_x\left(X_{T_{B_{M}}}\notin\partial_+ B_{M}\right)\le\frac{1}{M^K},
$$ where 
\begin{equation}\label{eq:defM0}
M_0:=\exp\left\{ 100 +4d (\log \kappa)^2\right\}
\end{equation} where $\kappa$ is the uniform ellipticity constant, which in our present case can be taken as $\kappa=\frac{1}{4d}$\mbox{, see \eqref{eq:kappa}.} It is well-known that both $(T')$ and $(P)_K$ imply ballisticity in direction $e_1$, see \cite{Sz02,BDR14}. Furthermore, in \cite{BDR14} it is shown that
$$
(P)_K \text{ holds for some }K \geq 15d+5 \Longleftrightarrow (T') \text{ holds } \Longleftrightarrow (T)_{\gamma} \text{ holds for some }\gamma \in (0,1).
$$

\subsection{Green's functions and operators}
Let us now introduce some notation we shall use related to the Green's functions of the RWRE and of the simple symmetric random walk (SSRW). 

Given a subset $B \subseteq \Z^d$, the Green's functions of the RWRE and SSRW killed upon exiting $B$ are respectively defined for $x,y \in B \cup \partial B$ as
$$
g_B(x,y,\omega):= E_{x,\omega}\left(\sum_{n=0}^{T_{B}} \mathbbm{1}_{\{X_n=y\}}\right)\hspace{1cm}\text{ and }\hspace{1cm}g_{0,B}(x,y):=g_{B}(x,y,\omega_0),
$$ where $\omega_0$ is the corresponding weight of the SSRW, given for all $x \in \Z^d$ and $e \in V$ by
$$
\omega_0(x,e)=\frac{1}{2d}.
$$ Furthermore, if $\omega \in \Omega$ is such that $E_{x,\omega}(T_B) < +\infty$ for all $x \in B$, we can define the corresponding Green's operator on $L^\infty(B)$ by the formula
$$
G_B[f](x,\omega):= \sum_{y \in B} g_B(x,y,\omega)f(y).
$$ Notice that $g_B$, and therefore also $G_B$, depends on $\omega$ only though its restriction $\omega|_B$ to $B$. Finally, it is straightforward to check that if $B$ is a slab as defined in \eqref{eq:slab} then both $g_B$ and $G_B$ are well-defined for all environments $\omega \in \Omega_\epsilon$ with $\epsilon \in (0,1)$.

\section{Proof of Theorem \ref{theo:0}}

The proof of Theorem \ref{theo:0} has several steps. We begin by establishing a law of large numbers for the sequence of hitting times $(T_n)_{n \in \N}$.

\subsection{Law of large numbers for hitting times}\label{sec:LLN}

We now show that, under the condition $(P)_K$, the sequence of hitting times $(T_n)_{n \in \N}$ satisfies a law of large numbers with the inverse of the velocity in direction $e_1$ as its limit.

\begin{proposition}
\label{prop-time} If $(P)_K$ is satisfied for some $K \geq 15d+5$ then $P_0$-a.s. we have that
\begin{equation}
\label{time-limitt}
\lim_{n\to\infty}\frac{E_0(T_n)}{n}
=\lim_{n\to\infty}\frac{T_n}{n}=\frac{1}{\vec{v}\cdot e_1} > 0,
\end{equation}
where $\vec{v}$ is the velocity of the corresponding RWRE.
\end{proposition}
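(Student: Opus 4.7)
\medskip

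\noindent\textbf{Proof proposal.} The plan is to reduce the statement to the ballistic law of large numbers for $X_n$ that follows from $(P)_K$, and then upgrade the almost-sure convergence for $T_n/n$ to convergence of expectations via a uniform integrability argument based on the $(T')$ regeneration structure.

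\emph{Step 1 (ballistic LLN).} By the equivalence recalled at the end of Section~\ref{sec:bal}, the hypothesis $(P)_K$ with $K\ge 15d+5$ implies that condition $(T')$ holds in direction $e_1$. This in turn yields the existence of a deterministic vector $\vec v$ with $\vec v\cdot e_1>0$ such that $X_n/n\to\vec v$ holds $P_0$-a.s., and in particular $X_n\cdot e_1\to\infty$ $P_0$-a.s.

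\emph{Step 2 (a.s.\ limit for $T_n/n$).} Because the walk is nearest-neighbor, the first time the first coordinate equals $n$ satisfies $X_{T_n}\cdot e_1=n$ exactly, and $T_n<\infty$ with $T_n\uparrow\infty$ on the event $\{X_n\cdot e_1\to\infty\}$. Evaluating the ballistic LLN along the random subsequence $k=T_n$ gives
$$
\vec v\cdot e_1 \;=\; \lim_{n\to\infty} \frac{X_{T_n}\cdot e_1}{T_n} \;=\; \lim_{n\to\infty}\frac{n}{T_n}\qquad P_0\text{-a.s.},
$$
which rearranges to the desired $T_n/n\to 1/(\vec v\cdot e_1)$ $P_0$-a.s.

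\emph{Step 3 (convergence of expectations).} To pass from the a.s.\ limit to the limit of $E_0(T_n)/n$, it suffices to show that the family $(T_n/n)_{n\ge 1}$ is uniformly integrable under $P_0$. This is where $(T')$ really enters: by Sznitman's regeneration structure in the sense of $(T')$, there exist $P_0$-a.s.\ finite regeneration times $0=\tau_0<\tau_1<\tau_2<\cdots$ such that the blocks $(\tau_{k+1}-\tau_k,\, X_{\tau_{k+1}}-X_{\tau_k})_{k\ge 1}$ are i.i.d.\ and $E_0((\tau_2-\tau_1)^p)<\infty$ for every $p\ge 1$, while $E_0((X_{\tau_2}-X_{\tau_1})\cdot e_1)\in(0,\infty)$. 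Writing $N(n):=\inf\{k\ge 1: X_{\tau_k}\cdot e_1\ge n\}$, one has $T_n\le \tau_{N(n)}$, and the elementary renewal bound combined with a Cauchy--Schwarz/Burkholder estimate on the i.i.d.\ block sums yields $\sup_{n\ge 1} E_0((T_n/n)^2)<\infty$, which is more than enough for uniform integrability. Combined with Step~2 this gives $E_0(T_n)/n\to 1/(\vec v\cdot e_1)$.

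\emph{Main obstacle.} Steps 1 and 2 are essentially soft consequences of known facts (ballisticity under $(T')$ plus the nearest-neighbor property). The substantive part is Step~3: one must import the moment estimates on the $(T')$-regeneration times, since neither the excerpt nor a direct tail estimate on $T_n$ produces $L^1$-convergence on its own. Any technical subtlety will sit in checking that the moment bound on $\tau_2-\tau_1$ indeed transfers to a uniform-in-$n$ bound on $E_0((T_n/n)^2)$, for which one uses the almost-sure inequality $T_n\le\tau_{N(n)}$ together with the fact that $N(n)/n$ converges both almost surely and in $L^p$ for all $p\ge 1$ by the LLN for the i.i.d.\ block displacements.
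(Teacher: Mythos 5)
Your proposal is correct, but it follows a genuinely different route from the paper at both of its nontrivial points. For the almost-sure statement, the paper does not evaluate the law of large numbers along the random subsequence $T_n$; it bounds $P_0\left(\left|n/T_n-\vec{v}\cdot e_1\right|\ge\varepsilon\right)$ by $\sum_{k\ge n}P_0\left(\left|X_k\cdot e_1/k-\vec{v}\cdot e_1\right|\ge\varepsilon\right)$ and invokes Sznitman's annealed slowdown estimates together with Borel--Cantelli; your subsequence argument, resting only on the nearest-neighbor identity $X_{T_n}\cdot e_1=n$ and $T_n\uparrow\infty$, is more elementary and equally valid. For the passage to expectations, the paper proves uniform integrability of $(T_n/n)_{n\in\N}$ from a direct tail estimate on $T_n/n$ of the form $c^{-1}\exp\{-c((\log(a-1/(\vec{v}\cdot e_1)))^{(2d-1)/2}+(\log n)^{(2d-1)/2})\}$, extracted from the proof of Theorem 3.4 in \cite{Sz02} (this is Lemma \ref{slowdowns} and Corollary \ref{uniform-integrability}); you instead import the $(T')$-regeneration structure and the fact that $\tau_1$ and $\tau_2-\tau_1$ have all polynomial moments under $(T')$, which is indeed available from the same circle of Sznitman's estimates, so this step is legitimate provided you cite those moment bounds explicitly. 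Your renewal step can moreover be simplified: since each regeneration block advances the $e_1$-coordinate by at least one unit, $N(n)\le n$ deterministically, hence $T_n\le\tau_{N(n)}\le\sum_{k=1}^{n}(\tau_k-\tau_{k-1})$ and Minkowski's inequality already gives $\sup_{n}E_0((T_n/n)^2)<\infty$, with no need for Burkholder or for $L^p$ control of $N(n)/n$. In short, both proofs lean on Sznitman's $(T')$ machinery for the uniform integrability, but they import different statements (a tail bound on $T_n$ itself versus moment bounds on regeneration times), and your treatment of the almost-sure limit is softer than the paper's.
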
 

To prove Proposition \ref{prop-time}, we will
require the following lemma and its subsequent corollary.

\begin{lemma}
\label{slowdowns} If $(P)_K$ holds for some $K \ge 15d+5$ then there exists $c_1>0$ such that for each $n \in \N$ and all $a>\frac{1}{v\cdot e_1}$ one has that
\begin{equation}
\label{ub-time}
P_0\left(\frac{T_n}{n}\ge a\right)\le\frac{1}{c_1}
\exp\left\{-c_1\left(\left(\log\left(a-\frac{1}{\vec{v}\cdot e_1}\right)\right)^{\frac{2d-1}{2}}+
\left(\log(n)\right)^{\frac{2d-1}{2}}\right)\right\}.
\end{equation}
\end{lemma}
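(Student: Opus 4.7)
The plan is to derive \eqref{ub-time} from the atypical slowdown estimates available under $(T')$, after first upgrading the hypothesis from $(P)_K$ to $(T')$. I would begin by invoking the equivalence recalled in Section \ref{sec:bal} (proved in \cite{BDR14}): since $K \ge 15d+5$, condition $(P)_K$ is equivalent to $(T')$, so $(T)_\gamma$ holds for every $\gamma \in (0,1)$. In particular, the law of large numbers with limiting velocity $\vec v$ is in force and $v:=\vec v\cdot e_1>0$, so that $a - 1/v$ is a well-defined positive quantity in the regime of the lemma.

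Next, I would translate the hitting-time slowdown into a walk slowdown. Setting $u := \lfloor an \rfloor$, the event $\{T_n \ge an\}$ is contained in $\{X_u \cdot e_1 < n\}$. Since $u \le an$ and $a > 1/v$, for $n$ sufficiently large one has $v - n/u \ge c_0(a - 1/v)$ for some constant $c_0 > 0$ (for smaller $n$ the bound in the statement is trivial after adjusting $c_1$). Thus $\{T_n \ge an\}$ is contained in an atypical slowdown event for $X_u$ with deviation gap proportional to $a - 1/v$.

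At this point I would apply Sznitman's slowdown estimate under $(T')$ from \cite{Sz02,Sz03}, which for $m$ large and $\delta > 0$ yields a bound of the form
$$
P_0\bigl(X_m \cdot e_1 < m(v-\delta)\bigr) \le C\exp\!\Bigl(-c(\log m)^{(2d-1)/2}\Bigr).
$$
A careful reading of the renormalization scheme in \cite{Sz02,Sz03} gives in fact a joint bound
$$
P_0(T_n \ge an) \le C \exp\!\Bigl(-c\bigl(\log(n(a-1/v))\bigr)^{(2d-1)/2}\Bigr),
$$
where the explicit dependence on $a-1/v$ arises from initializing the renormalization at a scale of order $(a-1/v)^{-1}$. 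Finally, applying the elementary inequality $(x+y)^p \ge x^p + y^p$, valid for $x,y \ge 0$ and $p \ge 1$, with $p = (2d-1)/2 \ge 3/2$, to $x = (\log n)_+$ and $y = (\log(a-1/v))_+$ (understood as zero when either log is negative, so that the statement is interpreted with positive parts), yields \eqref{ub-time} after absorbing constants into $c_1$.

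The main obstacle is to extract from the renormalization scheme of \cite{Sz02,Sz03,BDR14} the joint dependence on $n$ and on $a-1/v$ in the exponent, rather than merely a bound in $n$ with constants depending on $a-1/v$. This requires choosing the initial renormalization scale proportional to $(a-1/v)^{-1}$ so that the seed estimate accounts for the $(\log(a-1/v))^{(2d-1)/2}$ contribution, while iterating up to the target scale $n$ generates the $(\log n)^{(2d-1)/2}$ contribution. Carefully tracking both contributions through the multi-scale argument is the delicate step in the proof.
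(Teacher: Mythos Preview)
Your proposal is correct and follows essentially the same route as the paper: first upgrade $(P)_K$ to $(T')$ via \cite{BDR14}, then appeal to Sznitman's slowdown estimates. The paper's own proof is in fact just the two-line citation ``since $(P)_K$ holds for $K\ge 15d+5$, necessarily $(T')$ must also hold; a careful examination of the proof of Theorem~3.4 in \cite{Sz02} shows that the upper bound in \eqref{ub-time} is satisfied,'' so your sketch is already more detailed than what the paper provides, while resting on the same black box.
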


\begin{proof} 
By Berger, Drewitz and Ram\'irez \cite{BDR14},
we know that since $(P)_K$ holds for $K\ge 15d+5$,
necessarily $(T')$ must also hold. Now,
a careful examination of the proof of Theorem 3.4 in \cite{Sz02} shows that the upper bound in \eqref{ub-time} is satisfied.
\end{proof}

\begin{cor}
\label{uniform-integrability} If $(P)_K$ holds for some $K \geq 15d+5$ then $\left(\frac{T_n}{n}\right)_{n \in \N}$ is uniformly $P_0$-integrable.
\end{cor}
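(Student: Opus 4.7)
The plan is to verify the standard characterization of uniform integrability, namely that
\[
\lim_{M\to\infty}\sup_{n\in\N}E_0\!\left[\frac{T_n}{n}\mathbbm{1}_{\{T_n/n\ge M\}}\right]=0,
\]
by integrating the uniform-in-$n$ tail bound \eqref{ub-time} supplied by Lemma \ref{slowdowns}.

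First I would fix some $M>1+\tfrac{1}{\vec{v}\cdot e_1}$ and apply the layer cake representation to write
\[
E_0\!\left[\frac{T_n}{n}\mathbbm{1}_{\{T_n/n\ge M\}}\right]=M\cdot P_0\!\left(\frac{T_n}{n}\ge M\right)+\int_M^{\infty}P_0\!\left(\frac{T_n}{n}\ge a\right)\,da.
\]
Then, discarding the harmless factor $\exp(-c_1(\log n)^{(2d-1)/2})\le 1$ appearing on the right hand side of \eqref{ub-time}, both summands can be bounded uniformly in $n$ by a constant multiple of
\[
M\,\psi(M)+\int_M^{\infty}\psi(a)\,da,\qquad\text{where}\quad \psi(a):=\exp\!\Big(\!-c_1\big(\log(a-\tfrac{1}{\vec{v}\cdot e_1})\big)^{(2d-1)/2}\Big),
\]
so the only remaining task is to show that this quantity tends to $0$ as $M\to\infty$.

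The key observation is that, since $d\ge 2$, one has $\tfrac{2d-1}{2}\ge\tfrac{3}{2}>1$; hence for every $k\ge 1$ the inequality $(\log a)^{(2d-1)/2}\ge (k+2)\log a$ holds for all $a$ sufficiently large, which yields $\psi(a)=o(a^{-k})$ as $a\to\infty$. In particular $\psi$ is integrable at infinity, and both $M\,\psi(M)$ and $\int_M^\infty \psi(a)\,da$ vanish as $M\to\infty$. No substantial obstacle is expected here: the argument reduces to the super-polynomial decay of $\psi$, and the only mild subtlety is ensuring that $M$ is taken above the threshold $1+\tfrac{1}{\vec{v}\cdot e_1}$ so that the logarithm appearing in \eqref{ub-time} is positive throughout $[M,\infty)$.
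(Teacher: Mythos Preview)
Your proposal is correct and follows essentially the same approach as the paper: both arguments bound $E_0[(T_n/n)\mathbbm{1}_{\{T_n/n\ge M\}}]$ uniformly in $n$ by integrating the tail estimate of Lemma~\ref{slowdowns}, and both hinge on the observation that $(2d-1)/2>1$ for $d\ge 2$ gives super-polynomial decay of $\psi$. The only cosmetic difference is that the paper uses the discrete bound $\sum_{k=K}^\infty(k+1)P_0(T_n/n\ge k)$ in place of your continuous layer-cake formula.
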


\begin{proof} Note that, by Lemma \ref{slowdowns}, for $K>\frac{1}{v\cdot e_1}$ and $n\ge 2$ we have that

$$
\int_{\{ \frac{T_n}{n} \ge K\}} \frac{T_n}{n}dP_0
\le \sum_{k=K}^\infty (k+1) P_0\left(\frac{T_n}{n}\ge k\right)
\le
\frac{1}{c_1}
\sum_{k=K}^\infty (k+1)
e^{-c_1\left(\log\left(k-\frac{1}{\vec{v}\cdot e_1}\right)\right)^{
\frac{2d-1}{2}}-
c_1\left(\log(2)\right)^{\frac{2d-1}{2}}}.
$$
From here it is clear that, since $d \geq 2$, we have
$$
\lim_{K\to\infty} \left[\sup_{n\ge 1}\int_{\{ \frac{T_n}{n} \geq K\}} \frac{T_n}{n}dP_0\right]=0
$$ which shows the uniform $P_0$-integrability.
\end{proof}

Let us now see how to obtain Proposition \ref{prop-time} from Corollary \ref{uniform-integrability}. Since $(P)_K$ holds for $K \geq 15d+5$, by Berger, Drewitz and Ram\'\i rez \cite{BDR14} we know that the position of the random walk satisfies a law of large numbers with a velocity $\vec{v}$ 
such that $\vec{v}\cdot e_1>0$. 
Now, note that for any $\varepsilon>0$ one has
\begin{align*}
P_0\left(\left|\frac{n}{T_n}-\vec{v}\cdot e_1\right|\ge\varepsilon\right)
&=
P_0\left(\left|\frac{X_{T_n}\cdot e_1}{T_n}-\vec{v}\cdot e_1\right|
\ge\varepsilon\right)
\\
& \le
\sum_{k=n}^\infty P_0\left(\left|\frac{X_{k}\cdot e_1}{k}-\vec{v}\cdot e_1\right|\ge\varepsilon\right)\\
& \le\sum_{k=n}^\infty e^{-C(\log k)^{\frac{2d-1}{2}}},
\end{align*}
where in the last inequality we have used the slowdown estimates
for RWREs satisfying $(T')$
proved by Sznitman in \cite{Sz02} (see also the improved result
of Berger in \cite{B12}). Hence, by Borel-Cantelli we
conclude that $P_0$-a.s.
$$
\lim_{n\to\infty}\frac{n}{T_n}=\vec{v}\cdot e_1, 
$$
from where the second equality of \eqref{time-limitt} immediately follows. The first one is now a direct consequence of the uniform integrability provided by Corollary \ref{uniform-integrability}.

\subsection{Introducing Kalikow's walk} Given a nonempty connected strict subset $B \subsetneq \Z^d$, for $x \in B$ we define \textit{Kalikow's walk} on $B$ (starting from $x$) as the random walk starting from $x$ which is killed upon exiting $B$ and has transition probabilities determined by the environment $\omega_B \in \mathcal{P}^B$ given by 
\begin{equation}
\label{eq:defkalb}
\omega_B^x(y,e):=\frac{ \E( g_B(x,y,\omega)\omega(y,e))}{\E(g_B(x,y,\omega))}.
\end{equation} It is straightforward to check that by the uniform ellipticity of $\P$ we have $0< \E(g_B(x,y,\omega)) <+\infty$ for all $y \in B$, so that the environment $\omega_B^x$ is well-defined. In accordance with our present notation, we will denote the law of Kalikow's walk on $B$ by $P_{x,\omega_B^x}$ and its Green's function by $g_B(x,\cdot,\omega_B^x)$. The importance of Kalikow's walk, named after S. Kalikow who originally introduced \mbox{it in \cite{K81},} lies in the following result which is a slight generalization of Kalikow's formula proved in
\cite{K81} and of the statement of it given in \cite{Sa04}.

\begin{proposition} \label{prop:kali} If $B \subsetneq \Z^d$ is connected then for any $x \in B$ with $P_{x,\omega_B^x}(T_B < +\infty) = 1$ we have
	\begin{equation}\label{eq:sabkal}
	\E(g_B(x,y)) = g_B(x,y,\omega_B^x)
	\end{equation} for all $y \in B \cup \partial B$.
\end{proposition}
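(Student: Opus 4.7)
The plan is to show that, as functions of $y\in B\cup\partial B$, both sides of \eqref{eq:sabkal} satisfy the same linear recursion, and then to close with a uniqueness argument powered by the assumption $P_{x,\omega_B^x}(T_B<+\infty)=1$.

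First I would run a last-step analysis on the RWRE Green's function. Singling out the $n=0$ contribution to $\sum_{n=0}^{T_B}\mathbbm{1}_{\{X_n=y\}}$ and, for $n\ge 1$, conditioning on the predecessor $X_{n-1}=y-e\in B$, the Markov property yields the backward identity
\[
g_B(x,y,\omega)=\mathbbm{1}_{\{x=y\}}+\sum_{\substack{e\in V\\y-e\in B}}g_B(x,y-e,\omega)\,\omega(y-e,e),\qquad y\in B\cup\partial B.
\]
Averaging over $\P$ and invoking the very definition \eqref{eq:defkalb} of $\omega_B^x$, which recasts $\E[g_B(x,y-e,\omega)\,\omega(y-e,e)]$ as $\omega_B^x(y-e,e)\,\E[g_B(x,y-e,\omega)]$, the function $G(y):=\E(g_B(x,y,\omega))$ is seen to satisfy
\[
G(y)=\mathbbm{1}_{\{x=y\}}+\sum_{\substack{e\in V\\y-e\in B}}\omega_B^x(y-e,e)\,G(y-e).
\]
Applying the same last-step decomposition to Kalikow's walk, which lives in the deterministic environment $\omega_B^x$ and is therefore an ordinary Markov chain, shows that $H(y):=g_B(x,y,\omega_B^x)$ satisfies the identical recursion.

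For the final step I would argue that this recursion admits a unique solution, so that $G\equiv H$. The difference $F:=G-H$ obeys the homogeneous identity $F(y)=\sum_{e,\,y-e\in B}\omega_B^x(y-e,e)\,F(y-e)$; iterating this $n$ times expresses $F(y)$ as a weighted sum, over $n$-step trajectories of the Kalikow walk that have not yet exited $B$, of the values of $F$ at the starting endpoints. Because by hypothesis the Kalikow walk exits $B$ almost surely, the total mass carried by such non-exiting trajectories tends to zero as $n\to\infty$, and --- combined with the pointwise finiteness of $G$ and $H$ inherited from the uniform ellipticity of $\omega_B^x$ --- this forces $F\equiv 0$.

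The main obstacle lies in this final uniqueness step when $B$ is infinite: the finite-$B$ case reduces immediately to the invertibility of $I-P$ on a finite-dimensional space, but for general $B\subsetneq\Z^d$ some care is required to make the iterated tails really vanish. My fallback plan would be to approximate $B$ by an increasing exhaustion $B_n\uparrow B$ of finite subdomains (for instance $B_n:=B\cap[-n,n]^d$), to establish the proposition on each $B_n$ by the finite-dimensional argument, and to pass to the limit via monotone convergence for both the annealed and the Kalikow Green's functions, with the hypothesis $P_{x,\omega_B^x}(T_B<+\infty)=1$ being precisely what ensures that the two sides of \eqref{eq:sabkal} still coincide in the limit.
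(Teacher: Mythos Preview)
Your derivation of the common recursion for $G(y):=\E(g_B(x,y))$ and $H(y):=g_B(x,y,\omega_B^x)$ matches the paper exactly. The divergence is in how you close the argument, and here there is a genuine gap.

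When you iterate the homogeneous relation for $F=G-H$, you obtain
\[
F(y)=\sum_{z\in B}q^{(n)}(z,y)\,F(z),\qquad q^{(n)}(z,y)=P_{z,\omega_B^x}(X_n=y,\ T_B>n).
\]
The weights $q^{(n)}(z,y)$ are probabilities for the \emph{forward} Kalikow walk started from arbitrary points $z$, whereas the hypothesis $P_{x,\omega_B^x}(T_B<\infty)=1$ concerns only the starting point $x$; even if it extended to all $z$, it would control $\sum_y q^{(n)}(z,y)$, not the transpose sum $\sum_z q^{(n)}(z,y)$ appearing above. Without reversibility (which the Kalikow environment does not have) there is no reason for this backward mass to vanish, and $F$ need not be bounded on an infinite $B$, so the remainder cannot be dismissed as you suggest. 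Your exhaustion fallback has a separate issue: the Kalikow environment $\omega_{B_n}^x$ depends on $B_n$, so the identity on $B_n$ involves a different environment than $\omega_B^x$, and passing to the limit would require proving that $g_{B_n}(x,\cdot,\omega_{B_n}^x)\to g_B(x,\cdot,\omega_B^x)$, which is at least as delicate as the original statement.

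The paper sidesteps all of this. From the common recursion it first derives the one-sided inequality $H\le G$ (via the truncations $g_B^{(k)}$, equivalently by iterating the recursion for $G$ and dropping the nonnegative remainder). It then sums this inequality over $y\in\partial B$: since $\sum_{y\in\partial B}H(y)=P_{x,\omega_B^x}(T_B<\infty)=1$ by hypothesis and $\sum_{y\in\partial B}G(y)=P_x(T_B<\infty)\le 1$, the inequality forces $G=H$ on $\partial B$. Finally, the recursion itself propagates equality inward: if $G(y)=H(y)$ at some $y$, then $\sum_{e:\,y-e\in B}\omega_B^x(y-e,e)\,(G-H)(y-e)=0$ with all summands nonnegative, hence $G=H$ at every neighbour of $y$ in $B$; connectedness then gives equality throughout $B$. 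This boundary-to-interior argument is the key idea you are missing.
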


\begin{proof} The proof is similar to that of \cite[Proposition 1]{Sa04}, but we include it here for completeness. First, let us observe that for any $\omega \in \Omega_\epsilon$ and $y \in B \cup \partial B$ we have by the Markov property 
	\begin{align*}
	g_B(x,y,\omega)&= E_{x,\omega}\left( \sum_{n=0}^{T_B} \mathbbm{1}_{\{X_n=y\}}\right)\\
	& = \sum_{n=0}^{\infty} P_{x,\omega}\left( X_n = y , T_B \geq n \right)\\
	& = \mathbbm{1}_{\{x\}}(y) + \sum_{n=1}^{\infty} \sum_{e \in V} P_{x,\omega}\left(X_{n-1} = y-e,  X_n = y, T_B > n - 1\right)\\
	& = \mathbbm{1}_{\{x\}}(y) + \sum_{e \in V} \sum_{n=1}^{\infty}  P_{x,\omega}\left(X_{n-1} = y-e, T_B > n - 1\right)\omega(y-e,e)\\
	& = \mathbbm{1}_{\{x\}}(y) + \sum_{e \in V} \mathbbm{1}_{B}(y-e) g_B(x,y-e,\omega)\omega(y-e,e),
	\end{align*} so that 
	$$
	\E(g_B(x,y))= \mathbbm{1}_{\{x\}}(y) + \sum_{e \in V\,:\,y-e \in B} \E(g_B(x,y-e))\omega_B^x(y-e,e).
	$$ Similarly, if for each $k \in \N_0$ we define 
	$$
	g_B^{(k)}(x,y,\omega_B^x):=E_{x,\omega_B^x}\left( \sum_{n=0}^{T_B \wedge k} \mathbbm{1}_{\{X_n=y\}}\right)
	$$ then by the same reasoning as above we obtain 
	\begin{equation}
	\label{eq:kali3}
	g_B^{(k+1)}(x,y,\omega_B^x)= \mathbbm{1}_{\{x\}}(y) + \sum_{e \in V\,:\,y-e \in B}g^{(k)}_B(x,y-e,\omega_B^x)\omega_B^x(y-e,e).
	\end{equation} In particular, we see that for all $k \in \N_0$
	$$
	\E(g_B(x,y)) - g_B^{(k+1)}(x,y,\omega_B^x) = \sum_{e \in V\,:\,y-e \in B}\left(\E(g_B(x,y-e)) - g_B^{(k)}(x,y-e,\omega_B^x)\right)\omega_B^x(y-e,e)
	$$ which, since $\omega_B^x$ is nonnegative and also $g_B^{(0)}(x,y,\omega_B^x) = \mathbbm{1}_{\{x\}}(y) \leq \E(g_B(x,y))$ for every $y \in B \cup \partial B$, by induction implies that $g_B^{(k)}(x,y,\omega_B^x) \leq \E(g_B(x,y))$ for all $k \in \N_0$. Therefore, by letting $k \rightarrow +\infty$ in this last inequality we obtain 
	\begin{equation}\label{eq:kali1}
	g_B(x,y,\omega_B^x) \leq \E(g_B(x,y))
	\end{equation}
	for all $y \in B \cup \partial B$. In particular, this implies that 
	\begin{equation}\label{eq:kali2}
	P_{x,\omega_B^x}(T_B < +\infty) = \sum_{y \in \partial B} g_B(x,y,\omega_B^x) \leq \sum_{y \in \partial B} \E(g_B(x,y)) = P_x(T_B < +\infty) \leq 1.
	\end{equation} Thus, if $P_{x,\omega_B^x}(T_B<+\infty)=1$ then both sums on \eqref{eq:kali2} are in fact equal which, together with \eqref{eq:kali1}, implies that 
	$$
	g_B(x,y,\omega_B^x) = \E(g_B(x,y))
	$$ for all $y \in \partial B$. Finally, to check that this equality also holds for every $y \in B$, we first notice that for any $y \in B \cup \partial B$ we have by \eqref{eq:kali3} that
	$$
	g_B(x,y,\omega_B^x) = \mathbbm{1}_{\{x\}}(y) + \sum_{e \in V\,:\,y-e \in B}g_B(x,y-e,\omega_B^x)\omega_B^x(y-e,e)
	$$ so that if $y \in B \cup \partial B$ is such that $\E(g_B(x,y))=g_B(x,y,\omega_B^x)$ then
	$$
	0 =  \sum_{e \in V\,:\,y-e \in B}º\left(\E(g_B(x,y-e))-g_B(x,y-e,\omega_B^x)\right)\omega_B^x(y-e,e).
	$$ Hence, by the nonnegativity of $\omega_B^x$ and \eqref{eq:kali1} we conclude that if $y \in B \cup \partial B$ is such that \eqref{eq:sabkal} holds then \eqref{eq:sabkal} also holds for all $z \in B$ of the form $z=y-e$ for some $e \in V$. Since we already have that \eqref{eq:sabkal} holds for all $y \in \partial B$ and $B$ is connected, by induction one can obtain \eqref{eq:sabkal} for all $y \in B$.
	\end{proof}  

As a consequence of this result, we obtain the following useful corollary,
which is the original formulation of Kalikow's formula \cite{K81}.

\begin{cor}\label{cor:kali} If $B \subsetneq \Z^d$ is connected then for any $x \in B$ such that $P_{x,\omega_B^x}(T_B < +\infty) = 1$ we have
	$$
	E_x(T_B)=E_{x,\omega_B^x}(T_B)
	$$ and 
	$$
	P_{x}(X_{T_B} = y) = P_{x,\omega_B^x}(X_{T_B}= y )
	$$ for all $y \in \partial B$.
\end{cor}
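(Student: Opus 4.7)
The plan is to deduce both identities as direct bookkeeping consequences of Proposition \ref{prop:kali}, by expressing the quantities $E_x(T_B)$ and $P_x(X_{T_B}=y)$ in terms of the quenched Green's function $g_B(x,\cdot,\omega)$, averaging over $\omega$, and then applying the identity $\E(g_B(x,\cdot))=g_B(x,\cdot,\omega_B^x)$ to recognise the corresponding quantities for Kalikow's walk.

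First I would use the assumption $P_{x,\omega_B^x}(T_B<+\infty)=1$ to conclude that $T_B<+\infty$ holds $P_x$-a.s.\ and in fact $P_{x,\omega}$-a.s.\ for $\P$-a.e.\ $\omega$. This is obtained by revisiting inequality \eqref{eq:kali2} in the proof of Proposition \ref{prop:kali}: since its left-hand side equals $1$, the chain of inequalities there must collapse to equalities, yielding $P_x(T_B<+\infty)=\E(P_{x,\omega}(T_B<+\infty))=1$, from which the $\P$-a.s.\ statement follows because $P_{x,\omega}(T_B<+\infty)\le 1$.

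Next, for $y\in\partial B$ the decisive observation is that $X_n\in B$ for every $n<T_B$ (by definition of $T_B$) and $X_{T_B}\in\partial B$ on $\{T_B<+\infty\}$, so the walk visits the boundary point $y$ at most once, and only at the terminal time $T_B$. Consequently $g_B(x,y,\omega)=P_{x,\omega}(X_{T_B}=y)$ for $\P$-a.e.\ $\omega$, and the very same identity holds for Kalikow's walk thanks to the assumption. Averaging and invoking Proposition \ref{prop:kali} then gives
$$
P_x(X_{T_B}=y)=\E(g_B(x,y))=g_B(x,y,\omega_B^x)=P_{x,\omega_B^x}(X_{T_B}=y).
$$

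For the expected exit time I would use the complementary observation that, since $X_{T_B}\notin B$, for each $y\in B$ the indicator $\mathbbm{1}_{\{X_{T_B}=y\}}$ vanishes, so summing the defining series of $g_B(x,y,\omega)$ over $y\in B$ collapses to $E_{x,\omega}\bigl(\sum_{n=0}^{T_B-1} 1\bigr)=E_{x,\omega}(T_B)$ (finite $\P$-a.s.\ by the first step). Taking expectations, exchanging the finite sum and $\E$, and applying Proposition \ref{prop:kali} yields
$$
E_x(T_B)=\sum_{y\in B}\E(g_B(x,y))=\sum_{y\in B}g_B(x,y,\omega_B^x)=E_{x,\omega_B^x}(T_B),
$$
where the last equality again uses the fact that, under $P_{x,\omega_B^x}$, the sum $\sum_{y\in B}g_B(x,y,\omega_B^x)$ coincides with the expected value of $T_B$. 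No real obstacle is expected: Proposition \ref{prop:kali} has already done the analytic work, and the corollary amounts to identifying the correct linear functionals of the Green's function.
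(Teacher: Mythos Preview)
Your proposal is correct and follows essentially the same approach as the paper: both identities are read off directly from Proposition~\ref{prop:kali} by recognising $P_x(X_{T_B}=y)=\E(g_B(x,y))$ for $y\in\partial B$ and $E_x(T_B)=\sum_{y\in B}\E(g_B(x,y))$. One small wording issue: the sum over $y\in B$ need not be finite (e.g.\ when $B$ is a slab), but the interchange of $\E$ and $\sum$ is still justified by Tonelli since all summands are nonnegative.
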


\begin{proof} This follows immediately from Proposition \ref{prop:kali} upon noticing that, by definition of $g_B$, we have on the one hand
	$$
	E_x(T_B)=\sum_{y \in B} \E(g_B(0,y)) = \sum_{y \in B} g_B(x,y,\omega_B^x) = E_{x,\omega_B^x}(T_B)
	$$ and, on the other hand, for any $y \in \partial B$ 
	$$
	P_{x}(X_{T_B} = y )= \E(g_B(x,y))=g_B(x,y,\omega_B^x)=P_{x,\omega_B^x}(X_{T_B}=y).
	$$
\end{proof}

Proposition \ref{prop-time} shows that in order to obtain bounds on $\vec{v}\cdot e_1$, the velocity in direction $e_1$, it might be useful to understand the behavior of the expectation $E_0(T_n)$ as $n$ tends to infinity, provided that the polynomial condition $(P)_K$ indeed holds for $K$ sufficiently large. As it turns out, Corollary \ref{cor:kali} will provide a way in which to verify the polynomial condition together with the desired bounds for $E_0(T_n)$ by means of studying the killing times of certain auxiliary Kalikow's walks. To this end, the following lemma will play an important role.

\begin{lemma}\label{prop:kalidrift}
	If given a connected subset $B \subsetneq \Z^d$ and $x \in B$ we define for each $y \in B$ the drift at $y$ of the Kalikow's walk on $B$ starting from $x$ as 
	$$
	\vec{d}_{B,x}(y) := \sum_{e \in V} \omega_B^x(y,e) e
	$$ where $\omega_B^x$ is the environment defined in \eqref{eq:defkalb}, then 
	$$
	\vec{d}_{B,x}(y)= \frac{\E\left( \frac{\vec{d}(y,\omega)}{\sum_{e \in V}\omega(x,e)f_{B,x}(y,y+e,\omega)}\right)}{\E\left( \frac{1}{\sum_{e \in V} \omega(x,x+e)f_{B,x}(y,y+e,\omega)}\right)}.
	$$ where $f_{B,x}$ is given by
	$$
	f_{B,x}(y,z,\omega):=\frac{P_{z,\omega}(T_B \leq H_y)}{P_{x,\omega}(H_y < T_B)}
	$$ and $H_y:=\inf\{n \in \N_0 : X_n = y\}$ denotes the hitting time of $y$.
\end{lemma}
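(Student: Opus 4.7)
My approach is to trace the definition of Kalikow's drift back to the Green's function and then express the quenched Green's function through hitting probabilities, turning the abstract ratio of expectations into the concrete expression stated.

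I would begin with the observation that, by linearity and the identity $\sum_{e\in V}\omega(y,e)e = \vec{d}(y,\omega)$,
\[
\vec{d}_{B,x}(y) = \sum_{e\in V} \frac{\E(g_B(x,y,\omega)\omega(y,e))}{\E(g_B(x,y,\omega))}\,e = \frac{\E(\vec{d}(y,\omega)\,g_B(x,y,\omega))}{\E(g_B(x,y,\omega))}.
\]
Thus the whole lemma will follow once I prove the quenched identity
\[
g_B(x,y,\omega) = \frac{1}{\sum_{e\in V}\omega(y,e)\,f_{B,x}(y,y+e,\omega)}
\]
for every $\omega \in \Omega_\epsilon$; substituting this into the numerator and denominator of the ratio above then immediately produces the stated formula.

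To establish this identity, I would use a standard two-stage decomposition of the Green's function. First, applying the strong Markov property at the hitting time $H_y$ gives
\[
g_B(x,y,\omega) = P_{x,\omega}(H_y < T_B)\,g_B(y,y,\omega),
\]
since every visit to $y$ before $T_B$ must be preceded by a first visit. Second, the number of visits to $y$ by a walk started at $y$ is geometric with success probability the probability of exiting $B$ before returning to $y$, so
\[
g_B(y,y,\omega) = \frac{1}{P_{y,\omega}(T_B < H_y^+)},
\]
where $H_y^+$ denotes the first return time to $y$. A one-step Markov decomposition then yields
\[
P_{y,\omega}(T_B < H_y^+) = \sum_{e\in V}\omega(y,e)\,P_{y+e,\omega}(T_B \le H_y),
\]
where $\{T_B \le H_y\}$ is interpreted as automatic whenever $y+e \notin B$. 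Combining these three pieces, and then absorbing the common factor $P_{x,\omega}(H_y < T_B)$ in the denominator into each summand so as to reproduce $f_{B,x}(y,y+e,\omega)$, gives the desired pointwise identity.

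I do not expect a serious obstacle: the argument is a chain of standard Markov chain manipulations. The only care needed is with the boundary case $x = y$ (where $H_y = 0$ and $P_{x,\omega}(H_y < T_B) = 1$ trivially) and with the convention that $P_{y+e,\omega}(T_B \le H_y) = 1$ when $y+e \notin B$, neither of which creates any actual difficulty.
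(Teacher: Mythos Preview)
Your proposal is correct and takes essentially the same approach as the paper: both arguments express $g_B(x,y,\omega)$ via the strong Markov property at $H_y$, recognize the number of visits to $y$ as geometric, and perform a one-step decomposition of the escape probability to recover the $f_{B,x}$ factors. The only cosmetic difference is that you isolate the quenched identity $g_B(x,y,\omega)=\bigl(\sum_{e}\omega(y,e)f_{B,x}(y,y+e,\omega)\bigr)^{-1}$ as an explicit intermediate step, whereas the paper carries the expectation throughout, but the substance is identical.
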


\begin{proof} Observe that if for $y,z \in B \cup \partial B$ and $\omega \in \Omega$ we define 
	$$
	g(y,z,\omega):=P_{z,\omega}( H_y < T_B )
	$$ then by the strong Markov property we have for any $y \in B$
	$$
	\E(g_B(x,y,\omega)) = \E\left( E_{x,\omega}\left(\sum_{n=0}^{T_B} \mathbbm{1}_{\{X_n = y\}}\right)\right) = \E\left( g(y,x,\omega)E_{y,\omega}\left(\sum_{n=0}^{T_B} \mathbbm{1}_{\{X_n = y\}}\right)\right).
	$$
	Now, under the law $P_{y,\omega}$, the total number of times $n \in \N_0$ in which the random walk $X$ is at $y$ before exiting $B$ is a geometric random variable with success probability
	$$
	p:=\sum_{e \in V} \omega(y,y+e)(1-g(y,y+e,\omega)),
	$$ so that
	$$
	E_{y,\omega}\left(\sum_{n=0}^{T_B} \mathbbm{1}_{\{X_n = y\}}\right) = \frac{1}{\sum_{e \in V}\omega(y,y+e)(1-g(y,y+e,\omega))}.
	$$ It follows that
	$$
	\E(g_B(x,y,\omega))=\E\left( \frac{1}{\sum_{e \in V} \omega(y,y+e)f_{B,x}(y,y+e,\omega)}\right)
	$$ where $f_{B,x}$ is defined as
	$$
	f_{B,x}(y,z,\omega):=\frac{1-g(y,z,\omega)}{g(y,x,\omega)}=\frac{P_{z,\omega}(T_B \leq H_y)}{P_{x,\omega}(H_y < T_B)}.
	$$ By proceeding in the same manner, we also obtain
	$$
	\E(g_B(x,y,\omega)\omega(y,e))= \E\left( \frac{\omega(y,e)}{\sum_{e \in V} \omega(y,y+e)f_{B,x}(y,y+e,\omega)}\right), 
	$$ so that 
	$$
	\vec{d}_{B,x}(y)= \frac{\E\left( \frac{\vec{d}(y,\omega)}{\sum_{e \in V}\omega(y,y+e)f_{B,x}(y,y+e,\omega)}\right)}{\E\left( \frac{1}{\sum_{e \in V} \omega(y,y+e)f_{B,x}(y,y+e,\omega)}\right)}.
	$$
\end{proof}

As a consequence of Lemma \ref{prop:kalidrift}, we obtain the following key estimates on the drift of \mbox{Kalikow's walk.} 

\begin{proposition}\label{prop:kalibound} If $\P$ satisfies (QLD)$_{\epsilon}$ for some $\epsilon \in (0,1)$ then for any connected subset $B \subsetneq \Z^d$ and $x \in B$ we have
	$$
	\sup_{y \in B} \left[|\vec{d}_{B,x}
	(y) \cdot e_1 - \lambda|\right] \leq \frac{\epsilon^2}{d}.
	$$
\end{proposition}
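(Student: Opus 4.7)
The strategy is to exploit independence: although the formula in Lemma~\ref{prop:kalidrift} involves the full environment, the weight $f_{B,x}(y,y+e,\omega)$ depends only on $\omega$ away from the site $y$. Indeed, on the event $\{T_B \leq H_y\}$ (respectively $\{H_y < T_B\}$) the walk starting from $y+e$ (respectively from $x$) never visits $y$ before the relevant stopping time, so both $P_{y+e,\omega}(T_B\le H_y)$ and $P_{x,\omega}(H_y<T_B)$ are measurable with respect to $\mathcal{F}_y:=\sigma(\omega(z,\cdot)\,:\,z\neq y)$. Since $\mathcal{F}_y$ is independent of $\omega(y,\cdot)$ by the i.i.d.\ assumption, applying the tower property in the formula of Lemma~\ref{prop:kalidrift} and using that $\omega(y,\cdot)$ has the same law as $\omega(0,\cdot)$ gives
\[
\vec{d}_{B,x}(y)\cdot e_1 \;=\; \frac{\E[\phi(f(\omega))]}{\E[\psi(f(\omega))]},
\]
where for deterministic $f=(f_e)_{e\in V}\in[0,\infty)^V$ with $F:=\sum_e f_e>0$ one sets
\[
\phi(f):=\E_{0}\!\left[\frac{\vec d(0)\cdot e_1}{\sum_e f_e\,\omega(0,e)}\right],\qquad \psi(f):=\E_{0}\!\left[\frac{1}{\sum_e f_e\,\omega(0,e)}\right],
\]
with $\E_0$ integrating over $\omega(0,\cdot)$ only.

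Next I would reduce matters to the pointwise estimate $|\phi(f)/\psi(f)-\lambda|\le\epsilon^2/d$ for every admissible $f$; this indeed suffices because $\E[\phi(f)]-\lambda\,\E[\psi(f)]=\E[\psi(f)(\phi(f)/\psi(f)-\lambda)]$ and $\psi>0$. To prove the pointwise estimate, plug in the parametrization $\omega(0,e)=\tfrac{1}{2d}+\epsilon\,\xi_\epsilon(0,e)$ with $|\xi_\epsilon(0,e)|\le 1/(4d)$, and set $q_e:=f_e/F$ and $\tilde\eta:=2d\sum_e q_e\xi_\epsilon(0,e)$, which satisfies $|\tilde\eta|\le 1/2$. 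The denominator factorizes as $\sum_e f_e\omega(0,e)=(F/(2d))(1+\epsilon\tilde\eta)$. Writing $\mu_0:=\xi_\epsilon(0,e_1)-\xi_\epsilon(0,-e_1)$ one has $\vec d(0)\cdot e_1=\epsilon\mu_0$, $|\mu_0|\le 1/(2d)$ and $\lambda=\epsilon\,\E[\mu_0]$, and a short computation using the identity $1/(1+\epsilon\tilde\eta)=1-\epsilon\tilde\eta/(1+\epsilon\tilde\eta)$ yields
\[
\frac{\phi(f)}{\psi(f)}-\lambda \;=\; -\,\epsilon^2\;\frac{\mathrm{Cov}\bigl(\mu_0,\;\tilde\eta/(1+\epsilon\tilde\eta)\bigr)}{\E[1/(1+\epsilon\tilde\eta)]}.
\]

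All that is left is to bound the right-hand side. Since $\mu_0$ takes values in an interval of length $1/d$ and the map $x\mapsto x/(1+\epsilon x)$ is monotone on $[-1/2,1/2]$ with range $4/(4-\epsilon^2)$, Popoviciu's inequality gives $|\mathrm{Cov}(\mu_0,\tilde\eta/(1+\epsilon\tilde\eta))|\le 1/(d(4-\epsilon^2))$; meanwhile, convexity of $x\mapsto 1/(1+x)$ gives $\E[1/(1+\epsilon\tilde\eta)]\ge 1/(1+\epsilon/2)$. Combining these yields $|\phi(f)/\psi(f)-\lambda|\le \epsilon^2/(2d(2-\epsilon))\le\epsilon^2/d$ for all $\epsilon\in(0,1)$. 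The main obstacle is not conceptual but rather the final bookkeeping: because the constant $1/d$ in the target is sharp, one must extract both powers of $\epsilon$ (via $1/(1+\epsilon\tilde\eta)=1-\epsilon\tilde\eta/(1+\epsilon\tilde\eta)$) and then use a tight covariance estimate such as Popoviciu's, since a naive $L^\infty$ bound would produce a constant noticeably larger than $1/d$.
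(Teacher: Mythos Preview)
Your proof is correct. Both your argument and the paper's rest on the same independence observation from Lemma~\ref{prop:kalidrift}, namely that $f_{B,x}(y,y+e,\omega)$ is $\sigma(\omega(z):z\neq y)$-measurable; the difference lies in how the final estimate is extracted. The paper proceeds more directly: it sandwiches the denominator $\sum_e \omega(y,e)f_e$ between $(1\pm\tfrac{\epsilon}{2})\cdot\tfrac{1}{2d}\sum_e f_e$, splits the drift into positive and negative parts, and uses independence to factor the resulting expectations, arriving after some bookkeeping at a bound of order $\tfrac{2}{3d}\epsilon^2$ plus lower-order terms. You instead condition on $\mathcal{F}_y$ to reduce to deterministic $f$, then recognize the difference $\phi(f)/\psi(f)-\lambda$ as $\epsilon^2$ times a normalized covariance, and close with the Popoviciu-type bound $|\mathrm{Cov}(X,Y)|\le\tfrac14\,\mathrm{range}(X)\,\mathrm{range}(Y)$. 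Your route is more structural, yields the slightly sharper constant $\tfrac{1}{2d(2-\epsilon)}$, and makes transparent \emph{why} the error is $O(\epsilon^2)$: the first-order term vanishes because it is a covariance between $\mu_0$ and the constant~$1$. One minor remark: your appeal to convexity for the lower bound on $\E[1/(1+\epsilon\tilde\eta)]$ is unnecessary, since the pointwise bound $\tilde\eta\le 1/2$ already gives $1/(1+\epsilon\tilde\eta)\ge 1/(1+\epsilon/2)$ directly.
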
 

\begin{proof} First, let us decompose
	$$
	\E\left( \frac{\vec{d}(y,\omega) \cdot e_1}{\sum_{e \in V}\omega(y,e)f_{B,x}(y,y+e,\omega)}\right) = \E\left( \frac{(\vec{d}(y,\omega) \cdot e_1)_+-(\vec{d}(y,\omega) \cdot e_1)_-}{\sum_{e \in V}\omega(y,e)f_{B,x}(y,y+e,\omega)}\right).
	$$ Now, notice that since $\P(\Omega_\epsilon)=1$ we have
	$$
	\frac{1}{\sum_{e \in V}\omega(y,e)f_{B,x}(y,y+e,\omega)} \leq \frac{1}{\sum_{e \in V}\left(\frac{1}{2d} -\frac{\epsilon}{4d}\right)f_{B,x}(y,y+e,\omega)}\\
	\leq \frac{2d}{\sum_{e \in V}f_{B,x}(y,y+e,\omega)} \cdot \frac{1}{1-\frac{\epsilon}{2}}
	$$ and also 
	$$
	\frac{1}{\sum_{e \in V}\omega(y,e)f_{B,x}(y,y+e,\omega)} \geq \frac{2d}{\sum_{e \in V}f_{B,x}(y,y+e,\omega)} \cdot \frac{1}{1+\frac{\epsilon}{2}}.
	$$ In particular, we obtain that
	$$
	\E\left( \frac{(\vec{d}(y,\omega) \cdot e_1)_\pm}{\sum_{e \in V}\omega(y,e)f_{B,x}(y,y+e,\omega)}\right) \leq \frac{2d}{1-\frac{\epsilon}{2}} \cdot \E\left( \frac{1}{\sum_{e \in V}f_{B,x}(y,y+e,\omega)}\cdot (\vec{d}(y,\omega)\cdot e_1)_\pm\right)
	$$
	and 
	$$
	\E\left( \frac{(\vec{d}(y,\omega) \cdot e_1)_\pm}{\sum_{e \in V}\omega(y,e)f_{B,x}(y,y+e,\omega)}\right) \geq \frac{2d}{1+\frac{\epsilon}{2}} \cdot \E\left( \frac{1}{\sum_{e \in V}f_{B,x}(y,y+e,\omega)}\cdot (\vec{d}(y,\omega)\cdot e_1)_\pm\right)
	$$ Furthermore, since $\sum_{e \in V}f_{B,x}(y,y+e,\omega)$ is independent of $\omega(y)$ it follows that
	$$
	\E\left( \frac{1}{\sum_{e \in V}f_{B,x}(y,y+e,\omega)}\cdot (\vec{d}(y,\omega)\cdot e_1)_\pm\right) = \E\left( \frac{1}{\sum_{e \in V}f_{B,x}(y,y+e,\omega)}\right) \cdot \E\left( (\vec{d}(y,\omega)\cdot e_1)_\pm\right).
	$$ Finally, by combining this with the previous estimates, a straightforward calculation yields 
	$$
	\lambda - \frac{4}{4-\epsilon^2} \cdot \epsilon \cdot \E(|\vec{d}(y,\omega)\cdot e_1|) + \frac{2\epsilon^2}{4-\epsilon^2}\cdot \lambda \leq \vec{d}_{B,x}(y) \leq  \lambda + \frac{4}{4-\epsilon^2} \cdot \epsilon \cdot \E(|\vec{d}(y,\omega)\cdot e_1|) + \frac{2\epsilon^2}{4-\epsilon^2} \cdot \lambda 
	$$ Since $\P(\Omega_\epsilon)=1$ implies that $|\lambda|\leq \E(|\vec{d}(y,\omega)\cdot e_1|) \leq \frac{\epsilon}{2d}$, by recalling that $\epsilon < 1$ we conclude that if (QLD)$_\epsilon$ is satisfied then
	$$
	\lambda - \frac{2}{3d}\cdot \epsilon^2 + \frac{2}{3}\epsilon^4 \leq \vec{d}_{B,x}(x) \leq \lambda + \frac{2}{3d}\cdot \epsilon^2 + \frac{1}{3d} \cdot \epsilon^3
	$$ from where the result immediately follows.
\end{proof}

\subsection{(QLD) implies $(P)_K$}

Having the estimates from the previous section, we are now ready to prove the following result.

\begin{proposition}
	\label{prop:qldpk} If $\P$ verifies (QLD)$_\epsilon$ for some $\epsilon \in (0,1)$ then $(P)_K$ is satisfied for any $K\geq 15d+5$.
\end{proposition}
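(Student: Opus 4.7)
The plan is to apply Corollary \ref{cor:kali} to replace the RWRE exit probability appearing in $(P)_K$ by the corresponding exit probability of Kalikow's walk on $B_M$, and then to exploit the uniform drift lower bound provided by Proposition \ref{prop:kalibound}. Fix $x\in B_M^*$. Since $B_M$ is finite and $\omega_{B_M}^x$ is uniformly elliptic with the same constant $\kappa=1/(4d)$ (being a convex combination of uniformly elliptic kernels), Kalikow's walk on $B_M$ reaches $\partial B_M$ in finitely many steps $P_{x,\omega_{B_M}^x}$-a.s., so Corollary \ref{cor:kali} applies and gives
$$
P_x(X_{T_{B_M}}\notin \partial_+ B_M)\;=\;P_{x,\omega_{B_M}^x}(X_{T_{B_M}}\notin\partial_+ B_M).
$$
Moreover, Proposition \ref{prop:kalibound} yields $\vec{d}_{B_M,x}(y)\cdot e_1 \ge \lambda-\epsilon^2/d$ for every $y\in B_M$, which under (QLD)$_\epsilon$ is bounded below by $\bar c:=\epsilon^2(d-1)/d \ge \epsilon^2/2$. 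Furthermore, the a.s.\ bound $|\vec{d}(y,\omega)\cdot e_i|\le \epsilon/(2d)$ for $i\ge 2$ passes trivially through the convex combination defining $\omega_{B_M}^x$, so the lateral Kalikow drift is also at most $\epsilon/(2d)$ in absolute value.

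I would then choose $M:=[\epsilon^{-3}]$ (which exceeds $M_0$ once $\epsilon$ is small enough) and decompose $\partial B_M\setminus\partial_+B_M = \partial_-B_M\cup\partial_l B_M$. For the back-side exit, let $Z_n:=X_n\cdot e_1$. Since $x\in B_M^*$ gives $Z_0\ge M/2$ and the one-step increments $Z_{n+1}-Z_n\in\{-1,0,+1\}$ have conditional mean at least $\bar c$ under $P_{x,\omega_{B_M}^x}$, a standard exponential supermartingale argument (Taylor expand $E[e^{-\beta(Z_{n+1}-Z_n)}\mid\mathcal F_n]$ and take $\beta=\bar c/2$) combined with optional stopping at the first hit of $\{Z_n\le -M/2\}$ yields
$$
P_{x,\omega_{B_M}^x}(X_{T_{B_M}}\in\partial_-B_M)\;\le\; e^{-\bar c\, M/2}\;\le\; e^{-c_1\epsilon^{-1}}
$$
for some constant $c_1=c_1(d)>0$.

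For the lateral exit, I would first bound the exit time: while the walk remains in $B_M$ we have $Z_n\le M$, so $\{T_{B_M}\ge n\}$ forces $Z_n-Z_0\le M/2$; but $Z_n-Z_0$ equals a unit-increment martingale plus an accumulated drift of at least $n\bar c$, so Azuma--Hoeffding gives $P_{x,\omega_{B_M}^x}(T_{B_M}\ge n_0)\le e^{-c_2\epsilon^{-1}}$ for $n_0:=\lceil 2M/\bar c\rceil=O(\epsilon^{-5})$. For each $i\ge 2$, the process $X_n\cdot e_i - \sum_{k<n}\vec{d}_{B_M,x}(X_k)\cdot e_i$ is a martingale with unit-bounded increments, and its accumulated drift up to time $n_0$ is at most $n_0\,\epsilon/(2d)=O(\epsilon^{-4})$, which is negligible compared to $M^3=\epsilon^{-9}$. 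Azuma's maximal inequality then gives
$$
P_{x,\omega_{B_M}^x}\!\left(\sup_{n\le n_0}|X_n\cdot e_i|\ge 25M^3\right)\;\le\; 2\,e^{-c_3 M^6/n_0}\;\le\; e^{-c_4\epsilon^{-13}}.
$$
Summing over $i$ and combining with the back-exit and slow-exit estimates, the total probability of not exiting through $\partial_+B_M$ is at most a constant multiple of $e^{-c\epsilon^{-1}}$, which for $\epsilon$ small enough is far below $M^{-K}=\epsilon^{3K}$ for any fixed $K\ge 15d+5$, establishing $(P)_K$.

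The main delicate point is the choice of $M$: it must be large enough that the drift $\bar c\sim\epsilon^2$ carries the walk across the slab, forcing $M\bar c\gg 1$ (i.e.\ $M\gg\epsilon^{-2}$) so that the back-exit estimate beats $M^{-K}$, but not so large that the diffusive lateral fluctuations $\sqrt{M/\bar c}$ approach the lateral half-width $M^3$ (forcing $M\gg \epsilon^{-2/5}$). The scale $M\sim \epsilon^{-3}$ satisfies both constraints comfortably. The heart of the matter is Proposition \ref{prop:kalibound}, which guarantees that Kalikow's walk behaves as a genuinely biased walk in direction $e_1$ at every site despite being defined by averaging over the environment; once this uniform drift is in hand, the proof reduces to standard hitting-time and concentration estimates for a driftful walk in a box.
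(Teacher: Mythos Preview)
Your argument is correct, but it takes a substantially longer route than the paper's primary proof. The paper simply observes that Proposition~\ref{prop:kalibound} gives $\inf_{B,y}\vec d_{B,0}(y)\cdot e_1 \ge \lambda-\epsilon^2/d\ge \epsilon^2(d-1)/d>0$, i.e.\ \emph{Kalikow's condition} holds, and then quotes the known chain of implications Kalikow $\Rightarrow (T)\Rightarrow (T')\Rightarrow (P)_K$ from \cite{SZ99,Sz00,Sz02,BDR14}. This is essentially a two-line proof, and it works for every $\epsilon\in(0,1)$ as stated.

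The paper does also sketch an alternative \emph{direct} verification of $(P)_K$ via Kalikow's walk, and your approach is close in spirit to that one: both use Corollary~\ref{cor:kali} to pass to $P_{x,\omega_{B_M}^x}$ and Proposition~\ref{prop:kalibound} for the uniform drift. The paper's alternative then handles the back exit via the exact gambler's-ruin formula for the embedded $\pm e_1$ walk, and the lateral exit by counting $\pm e_1$ steps and coupling with binomials plus Chernoff; your exponential-supermartingale and Azuma arguments are legitimate substitutes. One small point: your specific choice $M=[\epsilon^{-3}]$ only yields $M\ge M_0$ when $\epsilon$ is small, whereas the proposition is stated for arbitrary $\epsilon\in(0,1)$. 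Since all of your exit estimates are exponentially small in $M$ with rates depending only on $\bar c=\bar c(\epsilon)$, the fix is simply to take $M$ sufficiently large depending on $K$ and $\epsilon$ (as the paper's alternative lemma does), rather than tying $M$ to $\epsilon$.
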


Proposition \ref{prop:qldpk} follows from the validity under (QLD) of the so-called \textit{Kalikow's condition}. Indeed, if we define the coefficient
$$
\varepsilon_{\mathcal{K}}:= \inf\{ \vec{d}_{B,0}(y) \cdot e_1 : B \subsetneq \Z^d \text{ connected with }0 \in B\,,\,y \in B\}.
$$ then Kalikow's condition is said to hold whenever $\varepsilon_{\mathcal{K}} > 0$. It follows from \cite[Theorem 2.3]{SZ99}, \cite[Proposition 1.4]{Sz00} and \cite[Corollary 1.5]{Sz02} that Kalikow's condition implies condition $(T)$. On the other hand, from the discussion in Section \ref{sec:bal} we know that $(T)$ implies $(P)_K$ for $K \geq 15d+5$ so that, in order to prove Proposition \ref{prop:qldpk}, it will suffice to check the validity of Kalikow's condition. But it follows from  Proposition \ref{prop:kalibound} that, under (QLD)$_\epsilon$ for some $\epsilon \in (0,1)$, for each connected $B \subsetneq \Z^d$ and $y \in B$ we have
$$
\vec{d}_{B,0}(y) \geq \lambda - \frac{\epsilon^2}{d} \geq \frac{d-1}{d}\epsilon^2 > 0
$$ so that Kalikow's condition is immediately satisfied and thus Proposition \ref{prop:qldpk} is proved.

Alternatively, one could show Proposition \ref{prop:qldpk} by checking the polynomial condition directly by means of Kalikow's walk. Indeed, if $\kappa$ denotes the uniform ellipticity constant of $\P$ then $\omega^x_{B}(y,e) \geq \kappa$ for all connected subsets $B \subsetneq \Z^d$, $x,y \in B$ and $e \in V$. In particular, it follows from this that $P_{x,\omega^x_{B_M}}(T_{B_M}<+\infty)=1$ for all $x \in B_M$ and boxes $B_M$ as in Section \ref{section2}. Corollary \ref{cor:kali} then \mbox{shows that,} in order to obtain Proposition \ref{prop:qldpk}, it will suffice to prove the following lemma.

\begin{lemma} If $\P$ verifies (QLD)$_\epsilon$ for some $\epsilon \in (0,\frac{1}{\sqrt{2(d-1)}})$ then for each $K \in \N$ we have  
	$$ 
\sup_{x \in B^*_M} P_{x,\omega^x_{B_M}}\left( X_{T_{B_M}} \notin \partial_+ B_M\right) \leq \frac{1}{M^K}
$$ if $M \in \N$ is taken sufficiently large (depending on $K$).
\end{lemma}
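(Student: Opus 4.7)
Since the Kalikow environment $\omega^x_{B_M}$ is uniformly elliptic with constant $\kappa=1/(4d)$, the walk $P_{x,\omega^x_{B_M}}$ exits $B_M$ almost surely, so Corollary \ref{cor:kali} applies. The main input will be Proposition \ref{prop:kalibound}, which under (QLD)$_\epsilon$ gives the uniform forward drift
$$
\vec{d}_{B_M,x}(y)\cdot e_1\;\ge\;\lambda-\frac{\epsilon^2}{d}\;\ge\;\delta\;:=\;\frac{d-1}{d}\,\epsilon^2 \;>\;0
$$
at every $y\in B_M$. The strategy is to reduce the lemma to three standard martingale estimates for a uniformly elliptic walk with constant positive drift in the $e_1$-direction, noting that all error probabilities we will obtain are exponentially small in a positive power of $M$ (with $\epsilon$ fixed), hence eventually smaller than $M^{-K}$.

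\textbf{Step 1 (back exit).} Set $S_n:=(X_n-x)\cdot e_1$. A first-order expansion of $e^{\pm\mu}$ together with the drift bound shows that for $\mu=c_1(d)\,\delta$ with $c_1(d)$ small enough, $(e^{-\mu S_n})_{n\ge 0}$ is a $P_{x,\omega^x_{B_M}}$-supermartingale. Since $x\cdot e_1\ge M/2$ on $B_M^*$, on the event $\{X_{T_{B_M}}\in\partial_- B_M\}$ we have $S_{T_{B_M}}\le -M$. Optional stopping then gives
$$
P_{x,\omega^x_{B_M}}\!\left(X_{T_{B_M}}\in\partial_- B_M\right)\;\le\;e^{-\mu M}\;\le\;e^{-c_2(d)\,\epsilon^2 M}.
$$
\textbf{Step 2 (exit-time tail).} By refining the previous calculation one finds constants $c_3(d),c_4(d)>0$ such that $F_n:=\exp(-\mu S_n+c_4\delta^2 n)$ is also a supermartingale for $\mu=c_1\delta$. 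Since $|S_n|\le M$ on $\{T_{B_M}>n\}$, optional stopping at $T_{B_M}\wedge n$ yields
$$
P_{x,\omega^x_{B_M}}(T_{B_M}>R)\;\le\;\exp(\mu M-c_4\delta^2 R),
$$
and choosing $R:=R(M,\epsilon,d):=C(d)\,M/\delta$ with $C(d)$ large enough gives $P_{x,\omega^x_{B_M}}(T_{B_M}>R)\le e^{-c_5(d)\,\epsilon^2 M}$. \textbf{Step 3 (lateral exit).} For each $i\in\{2,\dots,d\}$ the compensated process
$$
D^{(i)}_n\;:=\;(X_n-x)\cdot e_i-\sum_{k=0}^{n-1}\vec{d}_{B_M,x}(X_k)\cdot e_i
$$
is a martingale with increments bounded by $2$. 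Because $|x\cdot e_i|<M^3$ for $x\in B_M^*$ while $\partial_l B_M$ requires $|\,\cdot e_i|\ge 25M^3$ and the trivial bound $|\vec{d}_{B_M,x}\cdot e_i|\le 1$ makes the drift contribution over time $R$ at most $R\ll M^3$ for $M$ sufficiently large, a lateral exit within time $R$ forces $\sup_{k\le R}|D^{(i)}_k|\ge 23 M^3$. Azuma-Hoeffding combined with Doob's maximal inequality then gives
$$
P_{x,\omega^x_{B_M}}(\text{lateral exit},\;T_{B_M}\le R)\;\le\;2(d-1)\exp\!\left(-\frac{(23M^3)^2}{8R}\right)\;\le\;e^{-c_6(d,\epsilon)\,M^5}.
$$

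Combining Steps 1--3 via the decomposition
$$
P_{x,\omega^x_{B_M}}(X_{T_{B_M}}\notin\partial_+ B_M)\;\le\;P(\partial_- \text{ exit})+P(T_{B_M}>R)+P(\text{lateral},\;T_{B_M}\le R),
$$
each summand is bounded by $e^{-c(d,\epsilon) M^{\gamma}}$ for some $\gamma>0$, and for any prescribed $K$ this is at most $M^{-K}$ once $M\ge M_0(K,d,\epsilon)$. The main obstacle is the tuning of the two exponential supermartingales in Steps 1--2: one must choose $\mu$ and the exponent $c_4\delta^2$ compatibly so that the supermartingale property survives the quadratic correction in the Taylor expansion of $e^{\pm\mu}$ while the resulting exponent $\alpha R$ still dominates $\mu M$. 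The role of the hypothesis $\epsilon<1/\sqrt{2(d-1)}$ is precisely to keep the second-order term in that expansion strictly smaller than the drift term, making this tuning possible with explicit dimensional constants.
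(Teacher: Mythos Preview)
Your proof is correct but proceeds along a different line than the paper's. The paper splits into only two pieces, lateral versus back exit. For the back exit it extracts the embedded one-dimensional walk of the $e_1$-increments, shows via Proposition~\ref{prop:kalibound} that its probability of jumping right is at least $\tfrac12+(d-1)\epsilon^2$, and applies the exact gambler's-ruin formula. For the lateral exit it observes that from $B_M^*$ a lateral exit forces $T_{B_M}\ge 24M^3$ together with $n_+-n_-\le M/2$, and then couples with binomials and uses Chernoff. Your route instead builds an exponential supermartingale $e^{-\mu S_n}$ for the back side, a tilted version $e^{-\mu S_n+c\delta^2 n}$ for the exit-time tail, and Azuma on the transverse martingale for the lateral side. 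Both arguments yield exponential decay in a positive power of $M$ and hence beat $M^{-K}$; yours is slightly more streamlined in that it avoids the explicit binomial coupling, while the paper's is more explicit about constants.

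One remark: your closing sentence misidentifies the role of the hypothesis $\epsilon<1/\sqrt{2(d-1)}$. In your argument it is not actually needed---you choose $\mu=c_1(d)\delta$ with $c_1$ small, and then the quadratic remainder $O(\mu^2)$ is dominated by the linear term $\mu\delta$ for any $\epsilon\in(0,1)$. In the paper's proof, by contrast, this hypothesis is exactly what guarantees that the jump parameter $\tfrac12+(d-1)\epsilon^2$ of the coupled binomial lies in $(0,1)$. Also, a small slip: on $\{T_{B_M}>n\}$ one has $S_n\in(-\tfrac32 M,\tfrac12 M)$ rather than $|S_n|\le M$, but only the upper bound $S_n<\tfrac12 M$ is used and the argument goes through unchanged.
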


\begin{proof} Notice that for each $x \in B^*_M$ we have	\begin{equation}\label{eq:qldpk}
	P_{x,\omega^x_{B_M}}\left( X_{T_{B_M}} \notin \partial_+ B_M\right) \leq P_{x,\omega^x_{B_M}}\left( X_{T_{B_M}}  \in \partial_l B_M\right) + P_{x,\omega^x_{B_M}}\left( X_{T_{B_M}} \in \partial_- B_M\right)
	\end{equation} so that it will suffice to bound each term on the right-hand side of \eqref{eq:qldpk} uniformly in $B^*_M$.

To bound the first term, we define the quantities
$$
n_+:=\#\{ n \in \{1,\dots,T_{B_M}\} : X_n - X_{n-1} = e_1 \} \text{ and }n_-:=\#\{ n \in \{1,\dots,T_{B_M}\} : X_n - X_{n-1} = -e_1\}.
$$ and notice that on the event $\{X_{T_{B_M}}  \in \partial_l B_M\}$ we must have $n_+ - n_l \leq \frac{M}{2}$ since otherwise $X$ would reach $\partial_+ B_M$ before $\partial_l B_M$. Furthermore, on this event we also have \mbox{that $T_{B_M} \geq 24M^3$ since,} \mbox{by definition of $B^*_M$,} starting from any $x \in B^*_M$ it takes $X$ at least $24M^3+1$ steps to reach $\partial_l B_M$. It then follows that
\begin{equation}
\label{eq:qdl1}
P_{x,\omega^x_{B_M}}\left( X_{T_{B_M}}  \in \partial_l B_M\right)  =  \sum_{n=24M^3}^{\infty} P_{x,\omega^x_{B_M}}\left( n_+-n_- \leq \frac{M}{2} ,T_{B_M}=n\right).
\end{equation} Now, observe that the right-hand side of \eqref{eq:qdl1} can be bounded from above by
$$
\sum_{n=24M^3} \left[P_{x,\omega^x_{B_M}}\left( n_+ + n_- \leq \kappa N ,T_{B_M}=n\right) + 
P_{x,\omega^x_{B_M}}\left( n_+ + n_- > \kappa n , n_+ - n_- \leq \frac{M}{2} , T_{B_M}=n\right)\right].
$$ But since for all $y \in B_M$ and $e \in V$ we have $\omega^x_{B_M}(y,e) \geq \kappa=\frac{1}{4d}$ by the uniform ellipticity of $\P$ and, furthermore, by Proposition \ref{prop:kalibound} 
\begin{equation}
\label{eq:kali4}
\frac{\omega^x_{B_M}(y,e_1)-\omega^x_{B_M}(y,-e_1)}{\omega^x_{B_M}(y,e_1)+\omega^x_{B_M}(y,-e_1)}=\frac{\vec{d}_{{B_M},x}(y)\cdot e_1}{\omega^x_{B_M}(y,e_1)+\omega^x_{B_M}(y,-e_1)} \geq \frac{\lambda - \frac{\epsilon^2}{d}}{2\kappa} \geq 2(d-1)\epsilon^2 > 0,
\end{equation} it follows by coupling with a suitable random walk (with i.i.d. steps) that for \mbox{$n \geq 24M^3$ and $\epsilon < \frac{1}{\sqrt{2(d-1)}}$} (so as to guarantee that $\frac{1}{2}+(d-1)\epsilon^2 \in (0,1))$ we have
$$
P_{x,\omega^x_{B_M}}\left( n_+ + n_- \leq \kappa n ,T_{B_M}=n\right) \leq F(\kappa n ; n,2\kappa)
$$ and 
$$
P_{x,\omega^x_{B_M}}\left( n_+ + n_- > \kappa n , n_+ - n_- \leq \frac{M}{2} , T_{B_M}=n\right) \leq F\left(\frac{\kappa}{2}n +\sqrt[3]{n};\kappa n , \frac{1}{2}+(d-1)\epsilon^2\right) 
$$ where $F(t;k,p)$ denotes the cumulative distribution function of a $(k,p)$-Binomial random variable evaluated at $t \in \R$. By using Chernoff's bound which states that for $t \leq np$
$$
F(t;n,p) \leq \exp\left\{ -\frac{1}{2p}\cdot\frac{(np-t)^2}{n}\right\}
$$ we may now obtain the desired polynomial decay for this term, provided that $M$ is large enough (as a matter of fact, we get an exponential decay in $M$, with a rate which depends on $\kappa$ and $\epsilon$).

To deal with second term in the right-hand side of \eqref{eq:qldpk}, we define the sequence of stopping times $(\tau_n)_{n \in \N_0}$ by setting
$$
\left\{\begin{array}{ll} \tau_0:= 0 \\ \tau_{n+1}:= \inf \{ n > \tau_n : (X_n - X_{n-1}) \cdot e_1 \neq 0\} \wedge T_{B_M}\end{array}\right.
$$ and consider the auxiliary chain $Y=(Y_k)_{k \in \N_0}$ given by
$$
Y_k:= X_{\tau_k} \cdot e_1
$$ It follows from its definition and \eqref{eq:kali4} that $Y$ is a one-dimensional random walk with a probability of jumping right from any position which is at least $\frac{1}{2} +(d-1)\epsilon^2$. Now recall that, for any random walk on $\Z$ starting from $0$ with nearest-neighbor jumps which has a probability $p \neq \frac{1}{2}$ of jumping right from any position, given $a,b \in \N$ the probability $E(-a,b,p)$ of this walk exiting the interval $[-a,b]$ through $-a$ is exactly 
$$
E(-a,b,p)=\frac{1-\left(\frac{1-p}{p}\right)^b}{1-\left(\frac{1-p}{p}\right)^{a+b}}\cdot \left(\frac{1-p}{p}\right)^a.
$$ Thus, we obtain that 
$$
P_{x,\omega^x_{B_M}}\left( X_{T_{B_M}}  \in \partial_- B_M\right) \leq E\left(-M,\frac{M}{2},\frac{1}{2}+(d-1)\epsilon^2\right) =\frac{1-\left(\frac{1-2(d-1)\epsilon^2}{1+2(d-1)\epsilon^2}\right)^{\frac{M}{2}}}
{1-\left(\frac{1-2(d-1)\epsilon^2}{1+2(d-1)\epsilon^2}\right)^{\frac{3}{2}M}}\cdot \left(\frac{1-2(d-1)\epsilon^2}{1+2(d-1)\epsilon^2}\right)^M
$$ from where the desired polynomial decay (in fact, exponential) for this second term now follows.$  $ This concludes the proof.
\end{proof}

\subsection{Finishing the proof of Theorem \ref{theo:0}} We now show how to conclude the proof of \mbox{Theorem \ref{theo:0}}.

First, we observe that by Proposition \ref{prop:qldpk} the polynomial condition $(P)_K$ holds for all $K \geq 15d+5$ if (QLD)$_\epsilon$ is satisfied for $\epsilon$ sufficiently small, so that by Proposition \ref{prop-time} we have in this case that
$$
\lim_{n \rightarrow +\infty}\frac{E_0(T_n)}{n}=\frac{1}{\vec{v}\cdot e_1}.
$$ On the other hand, it follows from Proposition \ref{prop:kalibound} that if for each $n \in \N$ we define the hyperplane 
$$
B_n:= \{x \in \Z^d : x \cdot e_1 \leq n\},
$$ then $P_{0,B_n^0}(T_{B_n}<+\infty)=1$. Indeed, Proposition \ref{prop:kali} yields that $\inf_{y \in B_n} \vec{d}_{B_n,0}(y) \cdot e_1 > \frac{1}{d}\epsilon^2 > 0$ which, for example by suitably coupling Kalikow's walk with a one-dimensional walk with i.i.d. steps and a drift $\frac{1}{d}\epsilon^2$ to the right, yields our claim. Hence, by Corollary \ref{cor:kali} we obtain that
\begin{equation}
\label{eq:lastlim}
\lim_{n \rightarrow +\infty}\frac{E_{0,\omega^0_{B_n}}(T_n)}{n}=\frac{1}{\vec{v}\cdot e_1}.
\end{equation} Now, noting that for each $n\in \N$ the stopped process $M^{(n)}=(M^{(n)}_k)_{k \in \N_0}$ defined as
$$
M^{(n)}_k:= X_{k \wedge T_{n}} - \sum_{j=1}^{k \wedge T_n} \vec{d}_{S_n,0}(X_{j-1})
$$ is a mean-zero martingale under $P_{0,\omega^0_{S_n}}$, by Proposition \ref{prop:kalibound} and the \mbox{optional stopping theorem for $M^{(n)}$,} we conclude that
$$
n=E_{0,\omega^0_{S_n}}(X_{T_{n}}\cdot e_1)  \leq \left(\lambda + \frac{\epsilon^2}{d}\right)E_{0,\omega^0_{S_n}}(T_n)
$$ and analogously that
$$
n=E_{0,\omega^0_{S_n}}(X_{T_{n}}\cdot e_1) \geq \left( \lambda - \frac{\epsilon^2}{d}\right) E_{0,\omega^0_{S_n}}(T_{n}).
$$ Together with \eqref{eq:lastlim}, these inequalities imply that
$$
|\vec{v}\cdot e_1 - \lambda| \leq \frac{\epsilon^2}{d}
$$ from where the result now follows.

\section{Proof of Theorem \ref{theorem1} (Part I): seed estimates}
\label{sers}

We now turn to the proof of Theorem \ref{theorem1}. Let us observe that, having already proven Theorem \ref{theo:0} which is a stronger statement for dimension $d=2$, it suffices to show Theorem \ref{theorem1} only for $d \geq 3$. The main element in the proof of this result will be a renormalization argument, to be carried out in Section \ref{parti}. In this section, we establish two important estimates which will serve as the input for this renormalization scheme. More precisely, this section is devoted to proving the following result. As noted earlier, we assume throughout that $d \geq 3$. 

\begin{theorem}
	\label{polynomial-satisfied} If $d \geq 3$ then for any $\eta \in (0,1)$ and $\delta \in (0,\eta)$ there exist $c_{2},c_3,c_4> 0$ and $\theta_0 \in (0,1)$ depending only on $d,\eta$ and $\delta$ such that if:
	\begin{enumerate}
		\item [i.] The constant $\theta$ from \eqref{defL} is chosen smaller than $\theta_0$,
		\item [ii.] (LD)$_{\eta,\epsilon}$ is satisfied for $\epsilon$ sufficiently small depending only on $d,\eta,\delta$ and $\theta$,
	\end{enumerate}
	then 
	\begin{equation}\label{eq:poly}
	\sup_{x \in  B^*_{NL}}P_x\left(X_{T_{B_{NL}}} \notin\partial_+B_{NL}\right)\le e^{-c_{2}\epsilon^{-1}}
	\end{equation} and
	\begin{equation}\label{eq:teo6}
	\mathbb P\left(\left\{ \omega \in \Omega :\sup_{x \in \partial_- B^*_{NL}}\left|\frac{E_{x,\omega}\left(T_{B_{NL}}\right)}{NL/2}- \frac{1}{\lambda}
	\right|>
	\frac{c_{4}}{\lambda^2}\epsilon^{\alpha(d)-\delta}
	\right\}\right)\le e^{-c_{3}\epsilon^{-\delta}}.
	\end{equation}
	\end{theorem}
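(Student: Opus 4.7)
The plan is to work at two nested scales: the microscopic scale $L = 2[\theta\epsilon^{-1}]$ in the direction $e_1$, and the macroscopic scale $NL = L^4 \sim \epsilon^{-4}$ set by $N = L^3$. At scale $L$ one analyses the quenched exit of the walk from a slab of width $L$ and shows that both its exit-side distribution and its mean exit time are SSRW-like quantities corrected by the small drift $\lambda$. At scale $NL$ one concatenates $N$ such slab crossings and exploits concentration to convert the per-slab drift into both the exponential exit estimate \eqref{eq:poly} and the sharp hitting-time estimate \eqref{eq:teo6}.

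For the microscopic step I would perturb around the SSRW. Writing $\omega(x,e) = \frac{1}{2d} + \epsilon\xi_\epsilon(x,e)$ and using the resolvent identity between $g_U$ and the SSRW Green's function $g_{0,U}$ on the slab $U = U_L$, one obtains
$$
g_U(x,y,\omega) = g_{0,U}(x,y) + \epsilon\sum_{z \in U}g_{0,U}(x,z)\sum_{e \in V}\xi_\epsilon(z,e)\bigl(g_U(z+e,y,\omega) - g_U(z,y,\omega)\bigr),
$$
which one iterates once more. Taking $\P$-expectation, the first-order term captures the mean drift $\lambda$, while the second-order remainder is controlled by $\ell^2$-type sums of $g_{0,U}$ that scale like $L^{4-d}$ for $d = 3$ and like $L$ for $d\geq 4$; combined with the $\epsilon^2$ prefactor this is what produces the dimension-dependent exponent $\alpha(d)$ of \eqref{epsilond}. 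The same expansion applied to $P_{x,\omega}(X_{T_U}\cdot e_1 \geq L)$ and to $E_{x,\omega}(T_U)$ yields annealed estimates with error of order $\epsilon^{\alpha(d)-\delta}$.

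The second and most delicate step is to upgrade these annealed estimates to quenched deviation bounds of rate $e^{-c\epsilon^{-\delta}}$. The natural tool, as flagged in the introduction, is a Doob martingale: order the sites of $U$ and successively reveal the coordinates $\omega(x_1),\omega(x_2),\ldots$. The bounded differences of this martingale are controlled via the resolvent expansion by sensitivities essentially proportional to $g_{0,U}(\cdot,x_k)$, so that the quadratic variation is bounded by $\sum_z g_{0,U}(\cdot,z)^2$ and an Azuma--Hoeffding bound yields the claimed sub-Gaussian rate. The careful matching between this $\ell^2$-norm of the SSRW Green's function and the perturbation size $\epsilon$ is what forces the dimension split $\alpha(d)=2.5$ for $d=3$ versus $\alpha(d)=3$ for $d\geq 4$, and is the heart of the argument; I expect this to be the main obstacle, especially at the borderline dimension $d=3$.

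Finally, at the macroscopic scale I would define a renormalised walk on $L\Z$ that tracks successive slab-exits of the RWRE. On the $\P$-good event of probability $\geq 1 - e^{-c\epsilon^{-\delta}}$ produced above, its increments are close to those of a biased one-dimensional walk with per-slab bias of order $\lambda L$. A gambler-ruin estimate for this biased macroscopic walk (exit-left probability $\lesssim e^{-2\lambda L N}$ with $\lambda L N \gtrsim \epsilon^{\alpha(d)-\eta-4} \gtrsim \epsilon^{-1}$) then yields \eqref{eq:poly} with the claimed rate $e^{-c_2\epsilon^{-1}}$, once $\theta$ is fixed small enough. For \eqref{eq:teo6} I would apply optional stopping to the martingale $X_n\cdot e_1 - \sum_{k<n}\vec{d}(X_k)\cdot e_1$, which reduces $E_{x,\omega}(T_{B_{NL}})$ to the control of the random weighted sum $\sum_{y}g_{B_{NL}}(x,y,\omega)\bigl(\vec d(y)\cdot e_1 - \lambda\bigr)$; this last sum concentrates around zero by a similar martingale-in-the-environment argument, now applied to $B_{NL}$ in place of $U$.
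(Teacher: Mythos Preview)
Your scale-$L$ programme is essentially the paper's: expand the slab Green's function around the SSRW, control the annealed mean via Proposition~\ref{green-estimates}, and upgrade to quenched concentration by the Doob martingale of Propositions~\ref{control}--\ref{control2}. That part is fine.

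The first gap is in how you obtain \eqref{eq:poly}. Gambler's ruin for the renormalised walk on $L\Z$ controls only the event $\{X_{T_{B_{NL}}}\in\partial_-B_{NL}\}$, not the lateral exit $\{X_{T_{B_{NL}}}\in\partial_lB_{NL}\}$, so it cannot by itself deliver \eqref{eq:poly}. More importantly, in the paper \eqref{eq:poly} is proved \emph{first} (Section~\ref{sec:poly}, by plugging into Sznitman's effective-criterion estimate $\E(\sqrt{\rho_B})$ from \cite{Sz03}) and then \emph{used as input} to the macroscopic analysis: Proposition~\ref{exit-time-z} needs \eqref{eq:poly} in \eqref{pomega} to kill the event $\{\mathcal T^Z_{B'_{NL}}<\mathcal T^Y_{NL}\}$. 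Deriving \eqref{eq:poly} from the renormalised walk would therefore be circular unless you first supply an independent control on the lateral exit.

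The more serious gap is your last step for \eqref{eq:teo6}. You propose to run the martingale-in-the-environment argument directly on $B_{NL}$ to show that $\sum_y g_{B_{NL}}(x,y,\omega)(\vec d(y)\cdot e_1-\lambda)$ concentrates. This fails for two reasons. First, $g_{B_{NL}}(x,\cdot,\omega)$ is itself $\omega$-dependent through \emph{every} site of $B_{NL}$, so revealing $\omega(z)$ perturbs the entire sum, not just the $y=z$ term; the bounded differences are no longer governed by a single SSRW Green's function value. Second, and decisively, at width $NL\sim\epsilon^{-4}$ the walk is \emph{not} a perturbation of the SSRW: one has $\lambda(NL)^2\sim\epsilon^{\alpha(d)-\eta-8}\to\infty$, so there is no deterministic kernel to compare $g_{B_{NL}}$ to, and the $\ell^2$ bound that drove the scale-$L$ Azuma inequality has no analogue here. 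The paper avoids this by never doing concentration at scale $NL$: instead Lemma~\ref{step1} writes
\[
E_{x,\omega}(T_{B_{NL}})=\sum_{z\in\mathcal Z} g_Z(x,z,\omega)\,E_{z,\omega}\bigl(T_{U_L(z)}\wedge T_{B_{NL}}\bigr),
\]
so that the only quenched fluctuations to control are those of the slab quantities $E_{z,\omega}(T_{U_L(z)})$ and $G_{U_L(z)}[\vec d\cdot e_1](z,\omega)$, handled at scale $L$ by Propositions~\ref{control}--\ref{control2}, while the macroscopic factor $\sum_z g_Z(x,z,\omega)=E_{x,\omega}(\mathcal T^Z_{B_{NL}})$ is bounded deterministically on the good event by coupling $Y^{(e_1)}$ with the biased one-dimensional walks $y^{\pm}$ (Proposition~\ref{exit-time-z}). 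You should replace the direct martingale on $B_{NL}$ by this two-factor decomposition.
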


We divide the proof of this result into a number of steps, each occupying a separate subsection.

\subsection{(LD) implies $(P)_K$}\label{sec:poly} The first step in the proof will be to show \eqref{eq:poly}. Notice that, in particular, \eqref{eq:poly} tells us that for any $K \ge 1$ the polynomial condition $(P)_K$ is satisfied if $\epsilon$ is sufficiently small. This fact will also be important later on. The general strategy to prove \eqref{eq:poly} is basically to exploit the estimates obtained in \cite{Sz03} to establish the validity of the so-called effective criterion. First, let us consider the box $B$ given by 
\begin{equation}\label{eq:defb}
B:=(-NL,NL) \times \left(-\frac{1}{4}(NL)^3,\frac{1}{4}(NL)^3\right)^{d-1}
\end{equation} and define all its different boundaries $\partial_i B$ for $i=+,-,l$ by analogy with Section \ref{sec:GN}. Observe that \mbox{if for $x \in B^*_{NL}$} we consider $B(x):=B+x$, i.e the translate of $B$ centered at $x$, then by choice of $B$ we have that for any $\omega \in \Omega$
\begin{equation}\label{eq:pcb}
P_{x,\omega}\left( X_{T_{B_{NL}}} \notin \partial_+ B_{NL}\right) \leq P_{x,\omega}\left( X_{T_{B(x)}} \notin \partial_+ B(x)\right).
\end{equation} Thus, from the translation \mbox{invariance of $\P$} it follows that to obtain \eqref{eq:poly} it will suffice to show that 
\begin{equation}\label{eq:pc1}
P_0\left( X_{T_B} \notin \partial_+ B\right) \leq e^{-c_{2}\epsilon^{-1}}
\end{equation}for some constant $c_{2}=c_{2}(d,\eta)> 0$ if $\epsilon$ is sufficiently small. To do this, we will exploit the results developed in \cite[Section 4]{Sz03}. Indeed, if for $\omega \in \Omega$ we define
$$
q_B(\omega):=P_{0,\omega}\left( X_{T_B} \notin \partial_+ B\right) \hspace{1cm}\text{ and }\hspace{1cm}\rho_B(\omega):=\frac{q_B(\omega)}{1-q_B(\omega)}
$$ then observe that 
$$
P_0(X_{T_B} \notin \partial_+ B) = \E(q_B) \leq \E(\sqrt{q_B}) \leq \E(\sqrt{\rho_B}).
$$ But the results from \cite[Section 4]{Sz03} show that there exists a constant $c > 0$ and $\theta_0(d), \tilde{\epsilon}_0(d,\eta)>0$ such that if (LD)$_{\eta,\epsilon}$ is satisfied for $\epsilon \in (0,\tilde{\epsilon}_0)$ and $L$ from \eqref{defL} is given by $L=2[\theta \epsilon^{-1}]$ with $\theta \in (0,\theta_0)$ then 
$$
\E(\sqrt{\rho_B}) \leq \frac{80}{c\epsilon^{\alpha(d)-\eta}L}\exp\left(-\frac{c}{20}\epsilon^{\alpha(d)-\eta}NL\right) + 2d\exp\left(NL \left[\frac{\log 4d}{2}- 50\frac{\log 4d}{\log 2}\left(\frac{3}{4}-\frac{7}{100}\right)^2\right]\right).
$$ However, since for $\epsilon < \frac{\theta}{2}$ we have that
$$
\epsilon^{\alpha(d)-\eta}NL \geq 16\epsilon^{\alpha(d)-\eta}(\theta\epsilon^{-1}-1)^4 \geq \theta^4 \epsilon^{\alpha(d)-\eta-4} \geq \theta^4 \epsilon^{-(1+\eta)}
$$ together with
$$
\epsilon^{\alpha(d)-\eta}L \geq \theta \epsilon^{\alpha(d)-\eta-1} \geq \theta \epsilon
$$ and
$$
\frac{\log 4d}{2}- 50\frac{\log 4d}{\log 2}\left(\frac{3}{4}-\frac{7}{100}\right)^2 < 0,
$$ it is straightforward to check that if $\epsilon < \epsilon_0(d,\eta,\theta)$ then \eqref{eq:pc1} is satisfied.

\subsection{Exit measure from small slabs}
The second step is to obtain a control on the probability
that the random walk exits the slab $U$ ``to the right''.
For this we will follow to some extent Section 3 of Sznitman
\cite{Sz03}. 
We begin by giving two estimates: first, a bound for
the (annealed) expectation of $G_U(\vec{d}(0)\cdot e_1)$ in terms of the annealed expectation of $T_U$, 
and then a bound in $\P$-probability for the fluctuations of $E_{0,\omega}(T_U)$ around its mean $E_0(T_U)$.

\begin{proposition}
\label{green-estimates} If $d\ge 3$ and $\epsilon \in (0,\frac{1}{8d})$ then there exist positive constants $c_5,c_6,c_7$ and $c_8$ \mbox{such that} if $\epsilon,\theta \in (0,1)$ are such that $L \geq 2$ and $\epsilon L \leq c_5$ then one has 
\begin{equation}
\label{green-expectation}
\left|\mathbb E\left(G_{U}[\vec{d}\cdot e_1](0)\right)-
\lambda E_{0}(T_{U})\right|\le
c_6\epsilon \log L,
\end{equation}
and also 
\begin{equation}
\label{exit-time-estimate}
c_7L^2\le E_0\left(T_{U}\right)\le c_8L^2.
\end{equation} Furthermore, given any $\eta \in (0,1)$ there exists $\epsilon_0=\epsilon_0(d,\eta,\theta) \in (0,1)$ such that if (LD)$_{\eta,\epsilon}$ is satisfied for $\epsilon \in (0,\epsilon_0)$ then 
\begin{equation}
\label{eq:cotaGU}
\mathbb E\left(G_{U}[\vec{d}\cdot e_1](0)\right) \geq \frac{2}{5}d \lambda L^2.
\end{equation}
\end{proposition}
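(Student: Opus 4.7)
The plan is to derive all three estimates from a single key identity together with compensated-square estimates. The identity
$$
G_U[\vec{d}\cdot e_1](0,\omega) \;=\; E_{0,\omega}\bigl[X_{T_U}\cdot e_1\bigr]
$$
is obtained either by optional stopping at $T_U$ applied to the $P_{0,\omega}$-martingale $X_n\cdot e_1-\sum_{k<n}\vec{d}(X_k)\cdot e_1$, or by expanding the definition of $G_U$ and using the Markov property. Averaging over $\omega$ and subtracting $\lambda E_0(T_U)$, the difference in \eqref{green-expectation} becomes
$$
\E\bigl(G_U[\vec{d}\cdot e_1](0)\bigr)-\lambda E_0(T_U)\;=\;\sum_{y\in U}\E\left[g_U(0,y,\omega)\bigl(\vec{d}(y)\cdot e_1-\lambda\bigr)\right],
$$
so the problem reduces to bounding this sum of site-wise covariances.

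For \eqref{exit-time-estimate} I would apply optional stopping at $T_U$ to the $P_{0,\omega}$-martingale $(X_n\cdot e_1)^2-\sum_{k<n}\bigl[\,2(X_k\cdot e_1)\vec{d}(X_k)\cdot e_1+\sigma^2(X_k,\omega)\,\bigr]$, where $\sigma^2(x,\omega):=\omega(x,e_1)+\omega(x,-e_1)$ belongs to $[\tfrac{1}{d}-\tfrac{\epsilon}{2d},\tfrac{1}{d}+\tfrac{\epsilon}{2d}]$ by the $\Omega_\epsilon$-constraint. On $\{k<T_U\}$ one has $|X_k\cdot e_1|\leq L$, so $|2(X_k\cdot e_1)\vec{d}(X_k)\cdot e_1|\leq 2\epsilon L\leq 2c_5$; choosing $c_5$ small depending only on $d$ (for instance $c_5=1/(6d)$) keeps the bracketed quantity in $[\tfrac{1}{2d},\tfrac{2}{d}]$. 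Combined with $L^2\leq(X_{T_U}\cdot e_1)^2\leq(L+1)^2$, this yields uniform quenched bounds of the form $\tfrac{d}{2}L^2\leq E_{0,\omega}(T_U)\leq 8d(L+1)^2$, and averaging over $\omega$ gives \eqref{exit-time-estimate}.

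To bound the covariance sum reducing \eqref{green-expectation}, I would fix $y\in U$ and condition on $\omega$ outside $\{y\}$. The representation
$$
g_U(0,y,\omega)=\frac{h(y,\omega)}{1-\sum_{e\in V}\omega(y,e)\,p_e(y,\omega)},
$$
where $h(y,\omega):=P_{0,\omega}(H_y<T_U)$ and $p_e(y,\omega):=P_{y+e,\omega}(H_y<T_U)$ are both independent of $\omega(y)$, together with the substitution $\omega(y,e)=\tfrac{1}{2d}+\epsilon\xi_\epsilon(y,e)$ and a Taylor expansion in $\epsilon$, shows that the zeroth-order term of $g_U(0,y,\omega)$ is independent of $\omega(y)$ and hence has vanishing covariance with $\vec{d}(y)\cdot e_1-\lambda$; the leading nonzero contribution is of order $\epsilon^2$ and weighted by a Green-function-type quantity at $y$. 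The naive bound $\sum_y g_{0,U}(0,y)\cdot O(\epsilon^2)=O((\epsilon L)^2)=O(1)$ is however much too weak compared to the target $\epsilon\log L$, so achieving the sharp rate requires the Doob-martingale method foreshadowed in the introduction: $g_U(0,y,\omega)-\E(g_U(0,y,\omega))$ is decomposed along an enumeration of the sites in $U$, each martingale difference is controlled through a one-site Green's function perturbation, and Cauchy--Schwarz is combined with energy estimates for the SSRW Green's function (which in $d\geq 3$ yield the logarithmic factor).

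Finally, \eqref{eq:cotaGU} follows by combining \eqref{green-expectation} and \eqref{exit-time-estimate} with the (LD)$_{\eta,\epsilon}$ hypothesis $\lambda\geq\epsilon^{\alpha(d)-\eta}$. Since $\epsilon L\leq c_5$ forces $L\sim\theta\epsilon^{-1}$ (from \eqref{defL}), one has $c_6\epsilon\log L=O(\epsilon\log(1/\epsilon))$ whereas $c_7\lambda L^2\gtrsim\epsilon^{\alpha(d)-\eta-2}$, so the error term is asymptotically negligible for $\epsilon$ small (as $\alpha(d)\leq 3$ and $\eta>0$). Then $\E(G_U[\vec{d}\cdot e_1](0))\geq c_7\lambda L^2-c_6\epsilon\log L\geq\tfrac{2}{5}d\lambda L^2$ for $\epsilon$ sufficiently small, using the value $c_7=d/2$ from the compensated-square estimate, which comfortably exceeds $\tfrac{2d}{5}$. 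The main obstacle is therefore \eqref{green-expectation}: the sharp $\epsilon\log L$ rate requires the delicate Doob-martingale analysis above, whereas \eqref{exit-time-estimate} and \eqref{eq:cotaGU} then follow by more standard arguments.
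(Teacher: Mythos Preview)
The paper itself does not give a proof of this proposition: it simply points to \cite{Sz03}, invoking Proposition~3.1 there for \eqref{green-expectation} and \eqref{eq:cotaGU}, and inequalities (2.28) and (3.6) of \cite{Sz03} for \eqref{exit-time-estimate}. Your compensated-square martingale argument for \eqref{exit-time-estimate} is exactly what lies behind those two inequalities, and your derivation of \eqref{eq:cotaGU} from \eqref{green-expectation} and \eqref{exit-time-estimate} is correct.

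For \eqref{green-expectation} there is a genuine gap. Your reduction to the covariance sum $\sum_{y\in U}\E\bigl[g_U(0,y,\omega)(\vec d(y)\cdot e_1-\lambda)\bigr]$ and the representation $g_U(0,y,\omega)=h/(1-\sum_e\omega(y,e)p_e)$ with $h,p_e$ independent of $\omega(y)$ are both correct, as is your observation that the crude site-wise bound yields only $O((\epsilon L)^2)$. But the remedy you propose---the Doob martingale---is the wrong tool. That method (used here in Propositions~\ref{control} and~\ref{control2}, and in \cite[Proposition~3.2]{Sz03}) controls the \emph{deviation} of $G_U[\vec d\cdot e_1](0,\omega)$ or $E_{0,\omega}(T_U)$ from its $\P$-mean via Azuma's inequality; it says nothing about the mean itself, which is precisely what \eqref{green-expectation} concerns. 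No amount of concentration can bound an expectation.

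What actually produces the $\epsilon\log L$ rate in \cite[Proposition~3.1]{Sz03} is to stay with the site-wise expansion but exploit the constraint $\sum_{e\in V}\xi_\epsilon(y,e)=0$. Because of it, the perturbation of the denominator is $\epsilon\sum_e\xi_\epsilon(y,e)p_e=\epsilon\sum_e\xi_\epsilon(y,e)(p_e-\bar p)$ for any constant $\bar p$, and likewise $\sum_e C_e=0$ for the covariances $C_e=\mathrm{Cov}(\xi_\epsilon(y,e),\xi_\epsilon(y,e_1)-\xi_\epsilon(y,-e_1))$, so only \emph{differences} $p_e-p_{e'}$ enter the leading term. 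These differences are discrete gradients of Green's functions, and summing $g_{0,U}(0,y)$-weighted gradient quantities over $y\in U$ in $d\ge 3$ gives the logarithmic factor instead of $L^2$. In short, the site-wise covariance approach you began is the right one; the missing ingredient is the gradient cancellation forced by $\sum_e\xi_\epsilon=0$, not a switch to the martingale method.
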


\begin{proof} A careful inspection of the proof of
\cite[Proposition 3.1]{Sz03} yields the estimates \eqref{green-expectation} and \eqref{eq:cotaGU}.
On the other hand, inequalities (2.28) and (3.6) of \cite{Sz03}
give us the bounds in \eqref{exit-time-estimate}.
\end{proof}

The next estimate we shall need is essentially contained in Proposition 3.2 of \cite{Sz03}, which gives a control on the difference between the random variable $G_U(\vec{d}(0)\cdot e_1)$ and its expectation for $d \geq 3$. We include it here for completeness and refer to \cite{Sz03} for a proof.

\begin{proposition} 
\label{control} If $d \geq 3$ then there exist constants $c_9,c_{10}>0$ such that if $\epsilon,\theta,\alpha \in (0,1)$ satisfy $L \geq 2$ and $\epsilon L < \frac{1}{2}\cdot \frac{1-\alpha}{2-\alpha} \cdot c_9$, one has for all $u \geq 0$ that
\begin{equation}
\label{green-variance}
\P\left[\left|
G_U[\vec{d}(\cdot,\omega)\cdot e_1](0)-\E(G_U[\vec{d}\cdot e_1](0))\right|\ge u\right]
     \le  c_{10}\exp\left\{-\frac{u^2}{ c_{\alpha,L}}\right\},
\end{equation}
where
$$
c_{\alpha,L}:=c_{11}\epsilon^2\sum_{y\in U}g_{0,U}(0,y)^{2/(2-\alpha)}
$$ for some constant $c_{11}=c_{11}(d) > 0$ and 
\begin{equation}
\nonumber
c_{\alpha,L}\le
\begin{cases}
 c_{1,2}\epsilon^{2}L^{1+(2(1-\alpha)/(2-\alpha))} & \quad{\rm for}\ d=3\\
 c_{1,2}\epsilon^{2}L^{(4(1-\alpha)/(2-\alpha))} & \quad{\rm for}\ d=4\\
c_{1,2} \epsilon^{2} & \quad{\rm for}\ d\ge 5\ {\rm and}\ \alpha\ge\frac{4}{5}
\end{cases}
\end{equation}
for some $c_{1,2}=c_{1,2}(\alpha,d)> 0$.
\end{proposition}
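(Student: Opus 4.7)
The approach is a martingale--resampling argument. Since $\P$ is a product measure and
\[
F(\omega) := G_U[\vec{d}(\cdot,\omega)\cdot e_1](0) = \sum_{y \in U} g_U(0,y,\omega)\,\vec{d}(y,\omega)\cdot e_1
\]
depends only on $\omega|_U$, I fix an enumeration $y_1,\dots,y_{|U|}$ of $U$, set $\mathcal{F}_k := \sigma(\omega(y_1),\dots,\omega(y_k))$, and form the martingale $M_k := \E(F \mid \mathcal{F}_k)$ with increments $D_k := M_k - M_{k-1}$. The goal is then to control both the pointwise size of $D_k$ and the predictable quadratic variation $\sum_k \E(D_k^2 \mid \mathcal{F}_{k-1})$ in terms of the deterministic SSRW Green's function $g_{0,U}(0,\cdot)$.

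The key pointwise estimate I would establish is $|D_k| \leq c \epsilon\, g_{0,U}(0,y_k)$. To obtain it, write $D_k = \E(F(\omega) - F(\omega^{(k)}) \mid \mathcal{F}_k)$, where $\omega^{(k)}$ agrees with $\omega$ off $y_k$ and $\omega^{(k)}(y_k)$ is an independent copy of $\omega(y_k)$. The difference $F(\omega) - F(\omega^{(k)})$ splits into a diagonal contribution $g_U(0,y_k,\omega)[\vec{d}(y_k,\omega) - \vec{d}(y_k,\omega^{(k)})]\cdot e_1$, which is of order $\epsilon\, g_U(0,y_k,\omega)$ since $|\vec{d}(y_k)\cdot e_1|\leq \epsilon/(2d)$ on $\Omega_\epsilon$, and a propagation contribution $\sum_z [g_U(0,z,\omega) - g_U(0,z,\omega^{(k)})]\,\vec{d}(z,\cdot)\cdot e_1$. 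The standard single-site perturbation identity for Green's functions factors the latter through $g_U(0,y_k,\omega)$ times a remainder bounded (by the strong Markov property at the first visit to $y_k$, followed by $\sum_z g_U(y_k,z,\cdot)\leq CL^2$) by $C\epsilon L^2$, and so the smallness assumption $\epsilon L \leq \tfrac{1}{2}\tfrac{1-\alpha}{2-\alpha}c_9$ keeps it subdominant. Uniform ellipticity on $\Omega_\epsilon$ finally provides the deterministic comparison $g_U(0,y_k,\omega)\leq C g_{0,U}(0,y_k)$, yielding the advertised bound.

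To pass from this pointwise bound to the exponent $2/(2-\alpha)$ appearing in $c_{\alpha,L}$, I would interpolate between $L^\infty$ and $L^2$: truncating each $D_k$ at the level $\epsilon\, g_{0,U}(0,y_k)^{1/(2-\alpha)}$ and applying a Freedman/Bernstein-type exponential inequality for martingales with predictable quadratic variation bounded by $c_{11}\epsilon^2 \sum_{y\in U} g_{0,U}(0,y)^{2/(2-\alpha)}$, one recovers \eqref{green-variance}. The explicit dimensional control on $c_{\alpha,L}$ then follows by inserting the classical SSRW heat-kernel estimate $g_{0,U}(0,y)\leq C(|y|\vee 1)^{2-d}$ (valid for $d\geq 3$) and summing in spherical shells inside $U$, which produces the stated powers of $L$ in each dimension. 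The main difficulty is the propagation step, because a single-site perturbation a priori affects $g_U(0,z,\omega)$ for every $z\in U$; the decisive observation is the factorization through $g_U(0,y_k)\cdot g_U(y_k,z)$ together with the $O(L^2)$ bound on $E_{y_k,\omega}(T_U)$, which is exactly the argument carried out in \cite[Proposition 3.2]{Sz03}, to which we refer for the detailed computation.
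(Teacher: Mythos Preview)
Your martingale--resampling skeleton is exactly the right one and is precisely what \cite[Proposition 3.2]{Sz03} does (and what the paper simply cites). However, two of your key steps are misdescribed, and one of them is a genuine gap.

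The gap is the assertion that ``uniform ellipticity on $\Omega_\epsilon$ finally provides the deterministic comparison $g_U(0,y_k,\omega)\le C\,g_{0,U}(0,y_k)$.'' Uniform ellipticity alone gives nothing of the sort; a pointwise comparison of the perturbed Green's function with the SSRW one, with a constant independent of $L$, is exactly the delicate point. What Sznitman actually proves (see (2.18)--(2.19) in \cite{Sz03}) is the \emph{weaker} bound
\[
g_U(0,y_k,\omega)\le C\,g_{0,U}(0,y_k)^{1/(2-\alpha)},
\]
and it is precisely here that the parameter $\alpha$ and the hypothesis $\epsilon L<\tfrac12\tfrac{1-\alpha}{2-\alpha}c_9$ enter. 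This single inequality, plugged into the increment estimate, gives $|D_k|\le c\,\epsilon\,g_{0,U}(0,y_k)^{1/(2-\alpha)}$ directly, and then plain Azuma yields \eqref{green-variance} with $c_{\alpha,L}=c\,\epsilon^2\sum_y g_{0,U}(0,y)^{2/(2-\alpha)}$.

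This also means your explanation of where the exponent $2/(2-\alpha)$ comes from is inverted. There is no ``interpolation between $L^\infty$ and $L^2$,'' no truncation, and no need for a Freedman/Bernstein inequality: the exponent appears in the Green's function comparison itself, not in the concentration step. Indeed, if your stronger increment bound $|D_k|\le c\,\epsilon\,g_{0,U}(0,y_k)$ were available, Azuma would immediately give $c_{\alpha,L}$ with exponent $2$ (the limiting case $\alpha\uparrow 1$), which would be strictly better than the stated proposition and would make the whole $\alpha$-dependence superfluous. That this sharper bound is \emph{not} claimed is the signal that the linear comparison of Green's functions is not at hand.
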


Finally, we establish a control of the fluctuations
of the quenched expectation $E_{0,\omega}(T_U)$ analogous to
the one obtained in Proposition \ref{control}.

\begin{proposition} 
\label{control2} 
If $d \geq 3$ then for any $\alpha \in [0,1)$ and $\epsilon,\theta \in (0,1)$ with $L \geq 2$ and $\epsilon L < \frac{1}{2}\cdot \frac{1-\alpha}{2-\alpha} \cdot c_9$ where $c_9$ is the constant from Proposition \ref{control}, one has for all $u \geq 0$ that

\begin{equation}
\label{time-variance}
\P\left[\left|
E_{0,\omega}(T_U)-E_0(T_U)\right|\ge u\right]
     \le  c_{12}\exp\left\{-\frac{u^2}{ c'_{\alpha,L}}\right\}
\end{equation}
for some $c_{12}=c_{12}(d) > 0$, where 
$$
c'_{\alpha,L}:=c_{13}\sum_{y\in U}g_{0,U}(0,y)^{2/(2-\alpha)}
$$ for some constant $c_{13}=c_{13}(d)>0$ and
\begin{equation}
\nonumber
c'_{\alpha,L}\le
\begin{cases}
 c'_{1,2}L^{1+(2(1-\alpha)/(2-\alpha))} & \quad{\rm for}\ d=3\\
 c'_{1,2}L^{(4(1-\alpha)/(2-\alpha))} & \quad{\rm for}\ d=4\\
c'_{1,2}  & \quad{\rm for}\ d\ge 5\ {\rm and}\ \alpha\ge\frac{4}{5}
\end{cases}
\end{equation}
for some $c'_{1,2}=c'_{1,2}(\alpha,d)>0$.
\end{proposition}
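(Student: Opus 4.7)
My approach is to adapt directly the proof of Proposition \ref{control} (i.e.\ [Sz03, Proposition 3.2]) to the input function $f\equiv \mathbf{1}$. Indeed $E_{0,\omega}(T_U)=\sum_{y\in U} g_U(0,y,\omega)=G_U[\mathbf{1}](0,\omega)$, so the two statements concern the same object $G_U[f](0,\omega)$ for two different choices of $f$. The absence of the factor $\epsilon^2$ in $c'_{\alpha,L}$ (compared with $c_{\alpha,L}$) reflects the fact that $\mathbf{1}$ is deterministic of size $1$, whereas $\vec d\cdot e_1$ is of size $\epsilon$ after centering; this is the only piece of Sznitman's variance proxy that changes.

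Concretely, the plan is the following. Enumerate the sites of $U$ as $y_1,\ldots,y_N$, set $\mathcal F_n:=\sigma(\omega(y_1),\ldots,\omega(y_n))$, and introduce the Doob martingale $M_n:=\E[E_{0,\omega}(T_U)\mid\mathcal F_n]$, so that $M_0=E_0(T_U)$, $M_N=E_{0,\omega}(T_U)$, and the differences $\Delta_n=M_n-M_{n-1}$ encode the effect of resampling $\omega(y_n)$. To bound $|\Delta_n|$ one uses the Green's function resolvent identity: if $\omega'$ and $\omega$ agree off $\{y_n\}$ then
\begin{equation*}
E_{0,\omega}(T_U)-E_{0,\omega'}(T_U)=g_U(0,y_n,\omega)\sum_{e\in V}\bigl(\omega(y_n,e)-\omega'(y_n,e)\bigr)\,E_{y_n+e,\omega'}(T_U),
\end{equation*}
which, combined with the null-sum relation $\sum_e(\omega(y_n,e)-\omega'(y_n,e))=0$ (turning the sum into a discrete gradient of $y\mapsto E_{y,\omega'}(T_U)$) and with the quenched-versus-annealed Green's function comparison $g_U(0,y_n,\omega)\le C\,g_{0,U}(0,y_n)$ valid for $\epsilon L$ small, yields a $\P$-a.s.\ pointwise control of $|\Delta_n|$ in terms of $g_{0,U}(0,y_n)$.

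With these pointwise bounds on $\Delta_n$ in hand, the final step is to apply the same interpolated martingale concentration inequality used in [Sz03, Proposition 3.2]: H\"older-interpolating between an $L^\infty$ Azuma-type bound and a conditional second-moment bound yields a sub-Gaussian tail for $M_N-M_0$ with variance proxy proportional to $\sum_{y\in U} g_{0,U}(0,y)^{2/(2-\alpha)}$ for any $\alpha\in[0,1)$. Plugging in the classical heat-kernel estimates on $g_{0,U}(0,\cdot)$ in a slab of width $2L$ then yields the dimension-dependent bounds on $c'_{\alpha,L}$ stated for $d=3$, $d=4$ and $d\ge 5$, exactly as at the end of Sznitman's argument. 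The main delicate point — which is also the principal obstacle in Proposition \ref{control} — is ensuring the Green's function comparison $g_U\le C g_{0,U}$; this is the origin of the hypothesis $\epsilon L<\tfrac{1}{2}\cdot\tfrac{1-\alpha}{2-\alpha}\cdot c_9$ with the very same constant $c_9$ that appears in Proposition \ref{control}.
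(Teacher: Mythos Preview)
Your approach is essentially the paper's: the same Doob martingale on an enumeration of $U$, the same resolvent/perturbation identity for $G_U[\mathbf 1](0,\cdot)$ (the paper reaches it by differentiating the interpolation $\omega_u=(1-u)\omega+u\omega'$, which after integration is exactly your identity), the same use of the null-sum to replace $E_{y_n+e,\omega'}(T_U)$ by the discrete gradient $\nabla_e G_U[\mathbf 1](y_n,\omega')$, and then the bound $|\nabla_e G_U[\mathbf 1]|\le c''L$ together with $\epsilon L$ bounded.

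One point needs correcting. The comparison available from \cite{Sz03} is not $g_U(0,y_n,\omega)\le C\,g_{0,U}(0,y_n)$ but only
\[
g_U(0,y_n,\omega)\ \le\ c'\,g_{0,U}(0,y_n)^{1/(2-\alpha)},
\]
and this is precisely where the parameter $\alpha$ and the hypothesis $\epsilon L<\tfrac12\cdot\tfrac{1-\alpha}{2-\alpha}\cdot c_9$ enter. Plugging this into your resolvent identity gives the a.s.\ increment bound $|\Delta_n|\le c_{14}\,g_{0,U}(0,y_n)^{1/(2-\alpha)}$, after which \emph{plain} Azuma already yields the sub-Gaussian tail with variance proxy $\sum_{y\in U} g_{0,U}(0,y)^{2/(2-\alpha)}$. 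No H\"older-interpolated concentration inequality is needed (or used); the exponent $2/(2-\alpha)$ comes from squaring $1/(2-\alpha)$, not from the concentration step. With that adjustment your sketch matches the paper's proof.
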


\begin{proof} We follow the proof of \cite[Proposition 3.2]{Sz03}, using the martingale method introduced there.
Let us first enumerate the elements of $U$ as $\{x_n:n \in \N\}$.
Now define the filtration
$$
\mathcal G_n:=
\begin{cases}
\sigma(\omega(x_1),\ldots,\omega(x_n))& \qquad \text{ if }n\ge 1\\
\{\emptyset,\Omega\}&\qquad \text{ if }n=0
\end{cases}
$$
and also the bounded $\mathcal G_n$-martingale $(F_n)_{n \in \N_0}$ given for each $n \in \N_0$ by 
$$
F_n:=\mathbb E\left(G_U[\mathbf{1}](0)|\mathcal G_n\right)
$$ where $\mathbf{1}$ is the function constantly equal to $1$, i.e. $\mathbf{1}(x)=1$ for all $x \in \Z^d$. Observe that 
$$
G_U[\mathbf{1}](0,\omega)=\E_{0,\omega}(T_U)
$$ by definition of $G_U$. Thus, if we prove that for all $n \in \N$
\begin{equation}
\label{efen}
|F_n-F_{n-1}|\le c_{14}g_{0,U}(0,x_n)^{\frac{1}{2-\alpha}}=:\gamma_n
\end{equation}
for some $c_{14}=c_{14}(d)>0$ then, since $F_0=E_0(T_U)$ and $F_\infty=E_{0,\omega}(T_U)$, by using Azuma's inequality and the bound for $c_{\alpha,L}$ in Proposition \ref{control} (see the proof of \cite[Proposition 3.2]{Sz03} for further details) we obtain \eqref{time-variance} at once. In order to prove \eqref{efen}, for each
$n \in \N$ and all environments $\omega,\omega'\in\Omega_\epsilon$ coinciding at every $x_i$ with $i \neq n$ define
$$
\Gamma_n(\omega,\omega'):=G_U[\mathbf{1}](0,\omega')-G_U[\mathbf{1}](0,\omega).
$$
Since $F_n-F_{n-1}$ can be expressed as an integral of $\Gamma_n(\omega,\omega')$
with respect to $\omega$ and $\omega'$, it is enough to prove
that $\Gamma_n(\omega,\omega')$ is bounded from above by the constant $\gamma_n$ from \eqref{efen}. To do this, we introduce for $u\in [0,1]$ the environment
$\omega_u$ defined for each $i \in \N$ by
$$
\omega_u(x_i)=(1-u)\cdot \omega(x_i)+u \cdot \omega'(x_i).
$$ If we set 
$$
H_{x_n}:= \inf \{ j \geq 0 : X_j = x_n \} \hspace{1cm}\text{ and }\hspace{1cm}\overline{H}_{x_n}:=\inf \{ j \geq 1 : X_j = x_n\}
$$ then, by the strong Markov property for the stopping time $H_{x_n}$, a straightforward computation yields that
\begin{equation}\label{eq:GU2}
G_U[\mathbf{1}](0,\omega_u)= E_{0,\omega_u}\left( H_{x_n}\wedge (T_U-1) + 1 \right) + P_{0,\omega_u}(H_{x_n} < T_U ) E_{x_n,\omega_u}(T_U).
\end{equation} Similarly, by the strong Markov property for the stopping time $\overline{H}_{x_n}$ we have
\begin{equation}\label{eq:GU3}
E_{x_n,\omega_u}(T_U)= E_{x_n,\omega_u}(\overline{H}_{x_n} \wedge (T_U-1) + 1) + P_{x_n,\omega_u}(\overline{H}_{x_n}< T_U)E_{x_n,\omega_u}(T_U),
\end{equation} so that 
\begin{equation}\label{eq:GU}
G_U[\mathbf{1}](0,\omega_u) = E_{0,\omega_u}\left( H_{x_n}\wedge (T_U-1)+1\right) + \frac{P_{0,\omega_u}(H_{x_n}< T_U)}{P_{x_n,\omega_u}(\overline{H}_{x_n} > T_U)}E_{x_n,\omega_u}(\overline{H}_{x_n} \wedge (T_U-1)+1).
\end{equation}
Notice that $P_{0,\omega_u}(H_{x_n}<T_U)$ and the first term in the right-hand side of \eqref{eq:GU} do not depend on $u$. Furthermore, by the Markov property for time $j=1$, we have
$$
P_{x_n,\omega_u}(\overline{H}_{x_n}>T_U) = \sum_{e \in V} \omega_u(x_n,e)P_{x_n+e,\omega_u}(H_{x_n} > T_U)
$$
and 
$$
E_{x_n,\omega_u}(\overline{H}_{x_n} \wedge (T_U-1)+1)= \sum_{e \in V} \omega_u(x_n,e) (1 + E_{x_n+e,\omega_u}(H_{x_n} \wedge (T_U-1)+1)),
$$ so that differentiating $G_U[\mathbf{1}](0,\omega_u)$ with respect to $u$ yields
$$
\partial_u G_U[\mathbf{1}](0,\omega_u) = \frac{P_{0,\omega_u}(H_{x_n}< T_U)}{P_{x_n,\omega_u}(\overline{H}_{x_n} > T_U)} \sum_{e \in V} (\omega'(x_n,e)-\omega(x_n,e)) (A_e - B_e)
$$
where
$$
A_e:=1 + E_{x_n+e,\omega_u}(H_{x_n} \wedge (T_U-1)+1)
$$ and 
$$
B_e:= \frac{P_{x_n+e,\omega_u}(H_{x_n}> T_U)}{P_{x_n,\omega_u}(\overline{H}_{x_n} > T_U)}E_{x_n,\omega_u}(\overline{H}_{x_n} \wedge (T_U-1)+1).
$$ Now, by a similar argument to the one used to obtain \eqref{eq:GU2} and \eqref{eq:GU3}, we have that
$$
E_{x_n+e,\omega_u}(H_{x_n}\wedge (T_U-1)+1) = G_U[\mathbf{1}](x_n+e,\omega_u) - P_{x_n+e,\omega_u}(\overline{H}_{x_n} < T_U)G_U[\mathbf{1}](x_n,\omega_u)
$$ and 
$$
P_{x_n,\omega_u}(\overline{H}_{x_n} > T_U)G_U[\mathbf{1}](x_n,\omega_u)= E_{x_n,\omega_u}(\overline{H}_{x_n} \wedge (T_U-1) + 1),
$$ from which we conclude that 
$$
A_e-B_e = 1 + \nabla_e G_U[\mathbf{1}](x_n,\omega_u),
$$ where for any bounded $f: \Z^d \to \R$, $x \in \Z^d$ and $\omega \in \Omega$ we write
$$
\nabla_e G_U[f](x,\omega_u):= G_U[f](x+e,\omega_u) - G_U[f](x,\omega_u).
$$ Furthermore, by the proof of \cite[Proposition 3.2]{Sz03} we have
$$
\frac{P_{0,\omega_u}(H_{x_n}< T_U)}{P_{x_n,\omega_u}(\overline{H}_{x_n} > T_U)} = g_U(0,x_n,\omega_u) \leq c'g_{0,U}(0,x_n)^{\frac{1}{2-\alpha}} \hspace{1cm}\text{ and }\hspace{1cm}|\nabla_e G_U[\mathbf{1}](x_n,\omega_u)| \leq c''L
$$  where $L$ is the quantifier from \eqref{defL}. Since for $\omega',\omega \in \Omega_\epsilon$ we have
$$
\sum_{e \in V}|\omega'(x_n,e)-\omega(x_n,e)| \leq \epsilon,
$$ we conclude that for all $u \in [0,1]$
$$
|\partial_u G_U[\mathbf{1}](0,\omega_u)| \leq c'g_{0,U}(0,x_n)^{\frac{1}{2-\alpha}} \epsilon (1 + c''L) \leq c_{14} g_{0,U}(0,x_n)^{\frac{1}{2-\alpha}}
$$ since $\epsilon \leq 1$ and $\epsilon L < \frac{1}{2}\cdot \frac{1-\alpha}{2-\alpha} \cdot c_9 \leq \frac{c_9}{4}$ by hypothesis. From this estimate \eqref{efen} immediately follows, which concludes the proof.
\end{proof}

\subsection{Exit measures from small slabs within a seed box} 
The next step in the proof is to show that, on average, the random walk starting from any $z \in B_{NL}$ sufficiently far away from $\partial_l B_{NL}$ moves at least $\pm L$ steps in direction $e_1$ before reaching $\partial_l B_{NL}$. The precise estimate we will need is contained in the following proposition.

\begin{proposition}
\label{time-truncation} There exist three positive constants $c_{15},c_{16}=c_{16}(d)$ and $\epsilon_0=\epsilon_0(d)$ verifying that if $\epsilon,\theta \in (0,1)$ are such that $L \geq 2$, $\epsilon L \leq c_{16}$ and $\epsilon \in (0,\epsilon_0)$, then one has that
\begin{equation}\label{eq:time-truncation}
\sup_{\omega \in \Omega_\epsilon} \left|E_{z,\omega}(T_{U_L(z)})-E_{z,\omega}(T_{U_L(z)} \wedge T_{\partial_l B_{NL}})\right|\le 
e^{-c_{15}L} 
\end{equation}
for all $z \in B_{NL}'$, where 
$$
B'_{NL}:=\left\{ z \in B_{NL} : \sup_{2 \leq i \leq d} |z \cdot e_i | \leq 25(NL)^3 - N\right\}.
$$
\end{proposition}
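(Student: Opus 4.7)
The plan is to decompose $T_{U_L(z)}$ using the stopping time $\tau := T_{U_L(z)} \wedge T_{\partial_l B_{NL}}$ and apply the strong Markov property to reduce the estimate to two ingredients: (a) a quenched tail bound on the exit time from the slab $U_L(z)$ and (b) a uniform bound on the mean exit time from $U_L(z)$ starting from any point inside it. Since $T_{U_L(z)} \geq \tau$, with equality precisely on $\{T_{U_L(z)} \leq T_{\partial_l B_{NL}}\}$, the strong Markov property at $\tau$ gives
\begin{equation*}
E_{z,\omega}(T_{U_L(z)}) - E_{z,\omega}(\tau) = E_{z,\omega}\!\left[\mathbf{1}_{\{T_{\partial_l B_{NL}} < T_{U_L(z)}\}}\, E_{X_\tau,\omega}(T_{U_L(z)})\right],
\end{equation*}
and on the indicated event $X_\tau \in U_L(z) \cap \partial_l B_{NL}$ so the conditional expectation is again an exit time from the slab.

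The first thing I would establish is the quenched bound
\begin{equation*}
\sup_{y \in U_L(z)} \sup_{\omega \in \Omega_\epsilon} E_{y,\omega}(T_{U_L(z)}) \leq c\, L^2,
\end{equation*}
valid as long as $\epsilon L \leq c_{16}$ is small. This is the quenched analogue of the annealed estimate \eqref{exit-time-estimate} and can be obtained by applying optional stopping to the function $y \mapsto (y \cdot e_1 - z \cdot e_1)^2$ and using uniform ellipticity together with the fact that $\epsilon L$ small guarantees that the $e_1$-drift accumulated over the diffusive scale $L^2$ remains negligible compared to $L$; it is essentially inequality $(2.28)$ of \cite{Sz03} applied quenchedly.

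For the tail bound, I would use the crucial geometric observation that, since $z \in B'_{NL}$ lies at lateral $l^\infty$-distance at least $N$ from $\partial_l B_{NL}$ and the walk takes nearest-neighbor steps, one has the deterministic inclusion
\begin{equation*}
\{T_{\partial_l B_{NL}} < T_{U_L(z)}\} \subseteq \{T_{U_L(z)} > N\}.
\end{equation*}
Combining the uniform bound above with Markov's inequality, $P_{y,\omega}(T_{U_L(z)} > 2cL^2) \leq 1/2$ for every $y \in U_L(z)$, and iterating this via the strong Markov property on $k$ consecutive time windows of length $2cL^2$ gives $P_{z,\omega}(T_{U_L(z)} > 2ckL^2) \leq 2^{-k}$. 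Taking $k = \lfloor N/(2cL^2) \rfloor$, which is of order $L$ since $N = L^3$, yields $\sup_{\omega \in \Omega_\epsilon} P_{z,\omega}(T_{U_L(z)} > N) \leq e^{-c'L}$.

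Plugging both estimates into the Markov decomposition produces an upper bound of the form $cL^2 e^{-c'L}$, which is absorbed into $e^{-c_{15}L}$ for any $c_{15} < c'$ provided $L$ is large enough (equivalently $\epsilon$ small enough), with the resulting constant depending only on $d$. The subtle step is the uniform quenched slab exit-time bound: one has to rule out environments in $\Omega_\epsilon$ in which a persistent adverse drift against $e_1$ could inflate $E_{y,\omega}(T_{U_L(z)})$ beyond $O(L^2)$, which is precisely why the smallness hypothesis $\epsilon L \leq c_{16}$ is indispensable; if this condition failed the walk could be ballistic on the scale of the slab and the $L^2$ bound would have to be replaced by an $L/\epsilon$ bound, which is not strong enough for the iteration to reach the $e^{-cL}$ decay needed here.
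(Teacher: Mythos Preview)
Your argument is correct and reaches the same conclusion $cL^2 e^{-c'L}$ as the paper, but by a somewhat different and slightly more elementary route. The paper writes
\[
E_{z,\omega}(T_{U_L(z)})-E_{z,\omega}(T_{U_L(z)}\wedge T_{\partial_l B_{NL}})
\le E_{z,\omega}\bigl(T_{U_L(z)}\,\mathbbm 1(T_{U_L(z)}\ge T_{\partial_l B_{NL}})\bigr)
\]
and then applies Cauchy--Schwarz, which forces it to prove a second--moment bound $E_{z,\omega}(T_{U_L(z)}^2)\le cL^4$ (their Lemma~\ref{lemmaa}); you bypass this by applying the strong Markov property at $\tau$, so only the first--moment bound $\sup_{y\in U_L(z)}E_{y,\omega}(T_{U_L(z)})\le cL^2$ is needed. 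For the probability factor, both the paper and you use the same deterministic inclusion $\{T_{\partial_l B_{NL}}<T_{U_L(z)}\}\subseteq\{T_{U_L(z)}\ge N\}$; the paper then invokes the exponential--moment estimate $E_{x,\omega}\bigl(e^{cL^{-2}T_{U_L(z)}}\bigr)\le 2$ from \cite[Proposition~2.2]{Sz03} to get exponential decay directly, whereas you recover the same decay by iterating Markov's inequality on blocks of length $2cL^2$. Your approach avoids the second--moment lemma entirely; the paper's approach has the advantage that the tail bound is quoted rather than redone.
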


To prove Proposition \ref{time-truncation} we will
require the following two lemmas related to the
exit time $T_{U}$. The first lemma gives a uniform bound on the second moment of $T_{U}$.

\begin{lemma}
\label{lemmaa} There exist constants $c_{17},c_{18}=c_{18}(d)>0$ such that if $\epsilon,\theta \in (0,1)$ are taken such that $L \geq 2$ and $\epsilon  L \le c_{18}$ 
then one has that
$$
\sup_{z \in \Z^d,\,\omega \in \Omega_\epsilon}E_{z,\omega}(T_{U_L(z)}^2)\le c_{17} L^4.
$$
\end{lemma}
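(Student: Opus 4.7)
The plan is to reduce the bound on $E_{z,\omega}(T_{U_L(z)}^2)$ to a uniform-in-$\omega$ quenched bound on the first moment $E_{\cdot,\omega}(T_{U_L(z)})$ via the standard Green's function identity, and then to establish that first-moment bound by a supermartingale argument using a quadratic test function of the coordinate along $e_1$. By translation invariance of the lattice, it is enough to prove the estimate at $z=0$, so one may work with $U:=U_L(0)$ and then translate back.

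First, writing $T_U^2 = T_U + 2\sum_{n=0}^{T_U-1}(T_U-n-1)$ and applying the strong Markov property at each time $n < T_U$ yields the identity
\begin{equation*}
E_{0,\omega}(T_U^2) \;=\; 2\sum_{y \in U} g_U(0,y,\omega)\, E_{y,\omega}(T_U) \;-\; E_{0,\omega}(T_U),
\end{equation*}
so that $E_{0,\omega}(T_U^2) \leq 2\bigl(\sup_{y \in U} E_{y,\omega}(T_U)\bigr) E_{0,\omega}(T_U)$ since $\sum_y g_U(0,y,\omega)=E_{0,\omega}(T_U)$. Moreover, any $y \in U_L(0)$ satisfies $U_L(0)\subseteq U_{2L}(y)$, hence $T_{U_L(0)}\leq T_{U_{2L}(y)}$ under $P_{y,\omega}$. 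It therefore suffices to prove a quenched first-moment estimate of the form
\begin{equation*}
\sup_{\omega \in \Omega_\epsilon,\, z \in \Z^d} E_{z,\omega}\bigl(T_{U_M(z)}\bigr) \;\leq\; 2d\, M^2
\end{equation*}
valid for all $M\geq 2$ with $\epsilon M$ smaller than a suitable dimensional constant, and then apply it both at $M=L$ and $M=2L$ to obtain $E_{0,\omega}(T_U^2)=O(L^4)$.

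To prove this first-moment bound, I will use the test function $h(x) := M^2 - ((x-z)\cdot e_1)^2$, which is nonnegative on $U_M(z)$ and bounded above by $M^2$. Since $h$ depends only on $(x-z)\cdot e_1$, only the $\pm e_1$ increments contribute and a direct computation gives
\begin{equation*}
\sum_{e \in V} \omega(x,e)\bigl(h(x+e)-h(x)\bigr) \;=\; -\bigl(\omega(x,e_1)+\omega(x,-e_1)\bigr) - 2\bigl((x-z)\cdot e_1\bigr)\bigl(\omega(x,e_1)-\omega(x,-e_1)\bigr).
\end{equation*}
On $\Omega_\epsilon$ the first (``Laplacian'') term is bounded above by $-(1-\epsilon/2)/d$, while the second (``drift'') term has absolute value at most $\epsilon M/d$ uniformly on $U_M(z)$. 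Consequently, once $\epsilon M$ is smaller than a dimensional constant $c_{18}$, the whole expression is bounded above by $-1/(2d)$, so that $h(X_{n\wedge T_{U_M(z)}}) + (n\wedge T_{U_M(z)})/(2d)$ is a nonnegative supermartingale under $P_{z,\omega}$. As $T_{U_M(z)}$ is $P_{z,\omega}$-a.s.\ finite by uniform ellipticity, the optional stopping theorem gives $E_{z,\omega}(T_{U_M(z)}) \leq 2d\, h(z) \leq 2d\, M^2$, as needed.

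The only delicate point, and the source of the hypothesis $\epsilon L\leq c_{18}$, is ensuring that the drift-perturbation term, which grows linearly in $|(x-z)\cdot e_1|$ and can reach order $\epsilon M/d$ near $\partial U_M(z)$, does not overwhelm the negative Laplacian contribution. Once that point is granted, combining the two steps yields $E_{0,\omega}(T_U^2) \leq 2\cdot(2dL^2)\cdot\bigl(2d(2L)^2\bigr) = 16 d^2 L^4$, and setting $c_{17}:=16d^2$ completes the argument.
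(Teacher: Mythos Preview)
Your proof is correct and follows essentially the same strategy as the paper: both reduce the second moment to the first via the Green's function/strong Markov identity $E_{z,\omega}(T_U^2)\le 2\,E_{z,\omega}(T_U)\sup_{y}E_{y,\omega}(T_U)+E_{z,\omega}(T_U)$, and then use a uniform quenched bound $\sup_{y,\omega}E_{y,\omega}(T_{U_L(z)})\le cL^2$ valid when $\epsilon L$ is small. The only difference is that the paper quotes this last bound from Sznitman \cite{Sz03} (inequality (2.28) there), whereas you supply a self-contained supermartingale proof via the quadratic test function $h(x)=M^2-((x-z)\cdot e_1)^2$; note a harmless arithmetic slip in your final line, where $2\cdot(2dL^2)\cdot(2d(2L)^2)=32d^2L^4$, not $16d^2L^4$.
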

\begin{proof} Let us fix $x \in \Z^d$ and write $U_z:=U_L(z)$ in the sequel for simplicity. Notice that 
\begin{align}
E_{z,\omega}(T_{U_z}^2)&=E_{z,\omega}\left(\left(
\sum_{x\in U_z}\sum_{j=0}^{\infty}\mathbbm 1_x(X_{j})\mathbbm 1(j< T_{U_z})\right)^2\right)\nonumber \\
\label{expect-square}
& =
2\sum_{x\in U_z}\sum_{y\in U_z}\sum_{j=0}^{\infty}\sum_{k>j}^{\infty}
E_{z,\omega}\left(
\mathbbm 1_x(X_{j})\mathbbm 1_y( X_{k})
\mathbbm 1(k< T_{U_z})\mathbbm 1(j< T_{U_z})
\right)
+
E_{z,\omega}\left(T_{U_z}\right).
\end{align}
Now, by the Markov property, for each $j<k$ we have that
$$
E_{z,\omega}\left(\mathbbm 1_x(X_{j})\mathbbm 1_y( X_{k})
\mathbbm 1(k< T_{U_z})\mathbbm 1(j< T_{U_z})
\right)
=
E_{z,\omega}\left(
\mathbbm 1_x(X_{j})\mathbbm 1(j< T_{U_z})\right)
 E_{x,\omega}\left(\mathbbm 1_y( X_{i})
\mathbbm 1(i< T_{U_z})\right),
$$
where $i:=k-j$. Substituting this back into \eqref{expect-square}, we see that
\begin{align*}
E_{z,\omega}(T_{U_z}^2)&\le
2\sum_{x\in {U_z}}\sum_{y\in {U_z}}\sum_{j=0}^{\infty}\sum_{i=1}^{\infty}
E_{z,\omega}\left(
\mathbbm 1_x(X_{j})\mathbbm 1(j< T_{U_z})\right)
 E_{x,\omega}\left(\mathbbm 1_y( X_{i})
\mathbbm 1(i< T_{U_z})\right)+
E_{z,\omega}(T_{U_z})\\
& \le
2\sum_{x\in {U_z}}\sum_{j=0}^{\infty}
E_{z,\omega}\left(
\mathbbm 1_x(X_{j})\mathbbm 1(j< T_{U_z})\right)
 E_{x,\omega}\left(T_{U_z}\right)+
E_{z,\omega}(T_{U_z})\\
& = 2 E_{z,\omega}(T_{U_z}) \left(\sup_{y \in \Z^d} E_{y,\omega}(T_{U_z})\right) + E_{z,\omega}(T_{U_z}) \\
& \leq 2c^2L^4 + cL^2
\end{align*}
for some constant $c>0$, where for the last line we have used inequality (2.28) of Sznitman in \cite{Sz03}, which says that 
$$
\sup_{z,y \in \Z^d,\omega \in \Omega_\epsilon} E_{y,\omega}(T_{U_z}) \leq cL^2
$$ whenever $L \geq 2$ and $\epsilon L \leq c_{18}(d)$. From this the result immediately follows.
\end{proof}

Our second auxiliary lemma states that, with overwhelming probability, the random walk starting from any $x \in B_{NL}$ far enough from $\partial_l B_{NL}$ is very likely to move at least $\pm L$ steps in direction $e_1$ before reaching $\partial_l B_{NL}$.

\begin{lemma}
\label{lemmab} There exist constants $c_{19},c_{20}>0$  
such that if $\epsilon,\theta \in (0,1)$ satisfy $L \geq 2$ and $\epsilon L \leq c_{20}$ then for any $a \in (0,25(NL)^3)$ and $z\in B_{NL}$ verifying $\sup_{2 \le i\le d}|z\cdot e_i|\le a$ one has
$$
\sup_{\omega \in \Omega_\epsilon}P_{z,\omega}(T_{\partial_l B_{NL}} \leq T_{U_L(z)}) \le 2e^{-c_{19}\frac{25(NL)^3-a}{L^2}}.
$$
\end{lemma}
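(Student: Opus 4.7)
The plan is a one-scale renormalization. Set $D := 25(NL)^3 - a$, which is the minimum transverse distance separating $z$ from $\partial_l B_{NL}$ by hypothesis. On the event $\{T_{\partial_l B_{NL}} \leq T_{U_L(z)}\}$, the walk must accumulate an $e_i$-displacement of at least $D$ from $z$, for some $i \in \{2,\dots,d\}$, while remaining in $U_L(z)$; by a union bound over $i$ (and over signs), it suffices to control, for each fixed $i$, the probability that $|(X_n - z) \cdot e_i|$ reaches $D$ before the walk exits the slab. The key idea is to split this event into $\lceil D/L \rceil$ chunks of transverse displacement $L$, each of which requires surviving a single-scale event with uniformly positive failure probability.

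The heart of the argument is the following one-step estimate: there exist $p_0 > 0$ and $c_{20} > 0$, depending only on $d$, such that whenever $\epsilon L \leq c_{20}$ one has, for all $y \in U_L(z)$, all $i \in \{2,\dots,d\}$ and all $\omega \in \Omega_\epsilon$,
\begin{equation*}
P_{y,\omega}\bigl( T_{U_L(z)} \leq \tau^{(i)}(y)\bigr) \geq p_0, \quad \text{where } \tau^{(i)}(y) := \inf\{n \geq 0 : |(X_n - y) \cdot e_i| = L\}.
\end{equation*}
I would prove this by projecting the walk onto the two-dimensional sublattice spanned by $e_1$ and $e_i$. For the SSRW the projected process is, after a time change, a planar SSRW, and for any starting point strictly inside the rectangle $\{y' : -L \leq (y'-z)\cdot e_1 < L,\ |(y' - y)\cdot e_i| < L\}$ the probability of exit through the $e_1$-pair of faces is at least $\tfrac{1}{2}$, by the symmetry of the rectangle in the $e_i$-direction together with the discrete maximum principle. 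For the perturbed walk in $\Omega_\epsilon$, the excess drift accumulated over the exit time (of order $L^2$ in view of the Sznitman bound $\sup_{y,\omega} E_{y,\omega}(T_{U_L(y)}) \leq cL^2$ used in Lemma \ref{lemmaa}) is of order $\epsilon L$, so choosing $c_{20}$ small enough gives $p_0 \geq \tfrac{1}{4}$.

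Given the one-step estimate, set $\sigma_0 := 0$ and $\sigma_{k+1} := \inf\{n > \sigma_k : |(X_n - X_{\sigma_k}) \cdot e_i| = L\} \wedge T_{U_L(z)}$. The strong Markov property at $\sigma_k$ combined with the one-step estimate applied with $y = X_{\sigma_k}$ yields
$$
P_{z,\omega}\bigl( \sigma_{k+1} < T_{U_L(z)} \bigm| \mathcal{F}_{\sigma_k}\bigr) \leq 1 - p_0 \quad \text{on } \{\sigma_k < T_{U_L(z)}\},
$$
so $P_{z,\omega}(\sigma_m < T_{U_L(z)}) \leq (1-p_0)^m$ by induction. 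A telescoping triangle inequality gives $|(X_{\sigma_k} - z) \cdot e_i| \leq kL$; together with $|e_i|$-step bounded by $1$, the event $\{T_{\partial_l B_{NL}} \leq T_{U_L(z)}\}$ forces $\sigma_m < T_{U_L(z)}$ for some $m \geq \lceil D/L \rceil - 1$ and some $i$. A union bound then gives
$$
P_{z,\omega}(T_{\partial_l B_{NL}} \leq T_{U_L(z)}) \leq 2(d-1)(1 - p_0)^{\lceil D/L \rceil - 1} \leq 2(d-1) e^{-p_0 D/(2L)} \leq 2(d-1) e^{-p_0 D/(2L^2)},
$$
where the final inequality uses $L \geq 2$. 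Choosing $c_{19}$ sufficiently small (depending on $d$ and $p_0$) absorbs the factor $d-1$, handling the small-$D/L^2$ regime by the trivial bound $2 e^{-c_{19} D/L^2} \geq 1$.

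The main obstacle is the uniform-in-$(y,\omega)$ one-step estimate: the smallness condition $\epsilon L \leq c_{20}$ enters precisely because the cumulative drift, of order $\epsilon L$ on the typical exit-time scale $L^2$, must not overwhelm the baseline SSRW exit probability; the uniform ellipticity of $\Omega_\epsilon$ provides the SSRW comparison, and the scale separation $\epsilon L \leq c_{20}$ quantitatively controls the perturbation.
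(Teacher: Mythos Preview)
Your argument is correct but takes a genuinely different route from the paper. The paper's proof is essentially a two-liner: since the walk is nearest-neighbor, on the event $\{T_{\partial_l B_{NL}} \le T_{U_L(z)}\}$ one necessarily has $T_{U_L(z)} \ge 25(NL)^3 - a$; then an exponential Chebyshev inequality using the moment bound $\sup_{x,\omega} E_{x,\omega}\bigl(e^{c_{19} T_{U_L(z)}/L^2}\bigr) \le 2$ from \cite[Proposition~2.2]{Sz03} gives the result immediately. Your chunk-by-chunk renormalization recovers the same bound (in fact a sharper exponent $D/L$ rather than $D/L^2$, which you then discard) at the cost of setting up and justifying the one-step estimate; the perturbation argument for that estimate is plausible but left somewhat sketchy, in particular the reduction of the $d$-dimensional exit problem to a two-dimensional one via time-change. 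The paper's approach buys brevity and outsources all the scale-$L$ analysis to the already-available exponential moment bound, whereas your approach is more self-contained and would be useful in a setting where such a bound is not ready to hand.
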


\begin{proof} Note that for any $z \in B_{NL}$ with $\sup_{2 \leq i \leq d}|z\cdot e_i| \leq a$ one has that 
$$
P_{z,\omega}(T_{\partial_l B_{NL}} \leq T_{U_L(z)})  \le 
P_{z,\omega}(T_{U_L(z)}\ge 25(NL)^3-a).
$$ 
Furthermore, by Proposition 2.2 in \cite{Sz03}, there exist constants $c_{19},c_{20} > 0$ such that if $\epsilon L \leq c_{20}$ then for any $z \in \Z^d$
$$
\sup_{x \in \Z^d,\omega \in \Omega_\epsilon} E_{x,\omega}\left( e^{\frac{c_{19}}{L^2}T_{U_L(z)}}\right)\leq 2.
$$ Hence, by the exponential Tchebychev inequality we conclude that
$$
P_{z,\omega}(T_{\partial_l B_{NL}} \leq T_{U_L(z)}) \le e^{-c_{19}\frac{25(NL)^3-a}{L^2}}E_{z,\omega}\left(
e^{\frac{c_{19}}{L^2}T_{U_L(z)}}\right)\le
2e^{-c_{19}\frac{25(NL)^3-a}{L^2}}. 
$$\end{proof}

We are now ready to prove Proposition \ref{time-truncation}.
Indeed, notice that
\begin{align*}
E_{z,\omega}(T_{U_L(z)})&=E_{z,\omega}(T_{U_L(z)}\mathbbm 1(T_{U_L(z)}<T_{\partial_l B_{NL}}))+
E_{z,\omega}(T_{U_L(z)}\mathbbm 1(T_{U_L(z)}\ge T_{\partial_l B_{NL}}))\\
&
=E_{z,\omega}((T_{U_L(z)} \wedge T_{\partial_l B_{NL}})\mathbbm{1}(T_{U_L{(z)}}<T_{\partial_l B_{NL}}))
+
E_{z,\omega}(T_{U_L(z)}\mathbbm 1(T_{U_L(z)}\ge T_{\partial_l B_{NL}}))\\
& \leq E_{z,\omega}(T_{U_L(z)} \wedge T_{\partial_l B_{NL}}) +E_{z,\omega}(T_{U_L(z)}\mathbbm 1(T_{U_L(z)}\ge T_{\partial_l B_{NL}})).
\end{align*}
Hence, since $\sup_{2 \leq i \leq d}|z \cdot e_i| \leq 25(NL)^3 -N$ for any $z \in B'_{NL}$, by the Cauchy-Schwarz inequality and Lemmas \ref{lemmaa} and \ref{lemmab} it follows that
$$
|E_{z,\omega}(T_{U_L(z)})-E_{z,\omega}(T_{U_L(z)} \wedge T_{\partial_l B_{NL}})|\le \sqrt{E_{z,\omega}(T_{U_L(z)}^2)
P_{z,\omega}(T_{U_L(z)}\ge T_{\partial_l B_{NL}})}
\le \sqrt{2c_{17}}L^2 e^{-\frac{c_{19}}{2}L}.
$$ From this estimate, taking $c_{16}:=\min\{c_{18},c_{20}\}$ and $\epsilon$ sufficiently small yields \eqref{eq:time-truncation}.

\subsection{Renormalization scheme to obtain a seed estimate}
 Our next step is to derive estimates on the time spent by the random walk on slabs of size $NL$.

Let us fix $\omega \in \Omega$ and define two sequences $W=(W_k)_{k \in \N_0}$ and $V=(V_k)_{k \in \N_0}$ of stopping times, by setting $W_0 = 0$ and then for each $k \in \N_0$
$$
W_{k+1}:=\inf\{ n > W_k : |(X_n - X_{W_k})\cdot e_1| \geq L\} \hspace{1cm}\text{ and }\hspace{1cm}V_k:= W_{k} \wedge T_{B_{NL}}.
$$ Now, consider the random walks $Y=(Y_k)_{k \in \N_0}$ and $Z=(Z_k)_{k \in \N_0}$ defined for $k \in \N_0$ by the formula
\begin{equation}
\label{auxiliary-walk}
Y_k:=X_{W_k}
\end{equation}
and
\begin{equation}
\label{auxiliary-walk2}
Z_k:=X_{V_k}.
\end{equation}
Notice that at each step, the random walk $Y$ jumps from $x$ towards some $y$ with $(y-x)\cdot e_1 \geq L$, i.e. it exits the slab $U_L(x)$ ``to the right'', with probability $\hat{p}(x,\omega)$, where 
$$
\hat p(x,\omega):=P_{x,\omega}\left(T_{x\cdot e_1+L}<
 T_{x\cdot e_1-L}\right).
$$ Observe also that $\hat{p}$ verifies the relation
\begin{equation}
\label{eq:hatp}
\hat{p}(x,\omega)= \frac{1}{2}+\frac{1}{2L}G_{U_L(x)}[\vec{d}\cdot e_1](x,\omega)
\end{equation} which follows from an application of the optional sampling theorem to the $P_\omega$-martingale $(M_{k,\omega})_{k \in \N}$ given by 
$$
M_{k,\omega} = X_k - \sum_{j=0}^{k-1} \vec{d}(X_j,\omega).
$$ Now, for each $p \in  [0,1]$ let us couple $Y^{(e_1)}:=(Y_k \cdot e_1)_{k \in \N_0}$ with a random walk \mbox{$y^{(p)}:=(y^{(p)}_k)_{k \in \N_0}$ on $\mathbb Z$,} which starts at $0$ and in each step jumps one unit to the right with \mbox{probability $p$} and one to the left with probability $1-p$, in such a way that both $Y^{(e_1)}$ and $y^{(p)}$ jump together in the \mbox{rightward direction} with the largest possible probability, i.e. for any $k \geq 0$, $x \in \Z^d$ and $m \in \Z$
$$
P_\omega( Y^{(e_1)}_{k+1} \geq x \cdot e_1 + L \,,\,y^{(p)}_{k+1}=m +1 | Y_k = x \,,\,y_k^{(p)}=m) = \min\{ \hat{p}(x,\omega),p\}.
$$ The explicit construction of such a coupling is straightforward, so we omit the details. Call this the \textit{coupling to the right} of $Y^{(e_1)}$ and $y^{(p)}$. Now, consider the random walks $y^-:=y^{(p_-)}$ and $y^+:=y^{(p_+)}$, where 
\begin{equation}\label{p-}
p_-:= \left(\frac{1}{2}+\frac{\mathbb E(G_U[\vec{d}\cdot e_1](0))-\epsilon^{\alpha(d)-2-\delta}}{2L} \right)\vee 0
\end{equation}and
\begin{equation}\label{p+}
p_+:=\left(\frac{1}{2}+\frac{\mathbb E(G_U[\vec{d}\cdot e_1](0))+\epsilon^{\alpha(d)-2-\delta}}{2L}\right) \wedge 1,
\end{equation} and assume that they are coupled with $Y^{(e_1)}$ to the right. Let us call $E_0^-$ and $E_0^+$ the expectations defined by their
respective laws. 
Next, for each $M \in \N$ define the stopping times $\mathcal{T}^Y_M$, $S^+_M$ and $S^-_M$ given by 
$$
\mathcal T^Y_M:=\inf\left\{k \ge 0:Y_k\cdot e_1 \geq LM\right\} \hspace{1cm}\text{ and }\hspace{1cm}
\mathcal S^\pm_M:=\inf\left\{k\ge 0:y_k^\pm \geq M\right\}.
$$
Finally, if for each subset $A\subset\mathbb Z^d$ we define
the stopping time
$$
\mathcal T^Z_A:=\inf\left\{k\ge 0:Z_k\notin A\right\},
$$ we have the following control on the expectation of $\mathcal{T}^Z_{B_{NL}}$. 

\begin{proposition}
\label{exit-time-z} If $d \geq 3$ then for any given $\eta \in (0,1)$ and $\delta \in (0,\eta)$ there exist $c_{21},c_{22} > 0$ and $\theta_0 \in (0,1)$ depending only on $d,\eta$ and $\delta$ such that if:
\begin{enumerate}
	\item [i.] The constant $\theta$ from \eqref{defL} is chosen smaller than $\theta_0$,
	\item [ii.] (LD)$_{\eta,\epsilon}$ is satisfied for $\epsilon$ sufficiently small depending only on $d,\eta,\delta$ and $\theta$,
\end{enumerate} then for any $z \in \partial_- B^*_{NL}$ we have
$$
\mathbb P\left(\left\{ \omega \in \Omega : \frac{N/2}{2p_+ -1}-e^{-c_{22}\epsilon^{-1}}
\le E_{z,\omega}\left(\mathcal{T}^Z_{B'_{NL}}\right)\leq E_{z,\omega}\left(\mathcal T^Z_{B_{NL}}\right)\le 
\frac{N/2}{2p_--1}\right\}\right)\ge
1-e^{-c_{21}\epsilon^{-\delta}}.
$$ 
\end{proposition}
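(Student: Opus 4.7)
The plan is to construct a favorable environment event $\mathcal G \subseteq \Omega$ of $\P$-probability at least $1 - e^{-c_{21}\epsilon^{-\delta}}$ on which the single-slab exit bias $\hat p(x,\omega)$ defined by \eqref{eq:hatp} lies in $[p_-,p_+]$ for every $x \in B_{NL}$, and then to transfer this quenched control into bounds on $E_{z,\omega}(\mathcal T^Z_{B_{NL}})$ and $E_{z,\omega}(\mathcal T^Z_{B'_{NL}})$ by coupling $Y^{(e_1)}$ with $y^\pm$ to the right.

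To construct $\mathcal G$, observe that by \eqref{eq:hatp} and translation invariance of $\P$, the inclusion $\hat p(x,\omega)\in[p_-,p_+]$ is equivalent to the fluctuation bound $|G_{U_L(x)}[\vec d\cdot e_1](x,\omega) - \E(G_U[\vec d\cdot e_1](0))| \leq \epsilon^{\alpha(d)-2-\delta}$. Applying Proposition \ref{control} with $u=\epsilon^{\alpha(d)-2-\delta}$ and $\alpha\in(0,1)$ chosen close enough to $1$ depending on $d$ and $\delta$ (with $\alpha=4/5$ sufficient for $d\geq 5$), a direct computation using the explicit bounds on $c_{\alpha,L}$ yields $u^2/c_{\alpha,L}\geq c'\epsilon^{-\delta}$, so the single-site violation probability is at most $c_{10}e^{-c'\epsilon^{-\delta}}$. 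A union bound over the $O(L^{4d})$ sites of $B_{NL}$ absorbs the polynomial factor, giving $\P(\mathcal G)\geq 1-e^{-c_{21}\epsilon^{-\delta}}$ for all sufficiently small $\epsilon$.

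Fix $\omega\in\mathcal G$ and $z\in\partial_-B^*_{NL}$, so that $z\cdot e_1 = NL/2$. Since $X$ is nearest-neighbor, $Y^{(e_1)}_{k+1}-Y^{(e_1)}_k\in\{-L,+L\}$ with rightward probability $\hat p(Y_k,\omega)\in[p_-,p_+]$. Coupling $Y^{(e_1)}$ to the right with $y^-$ thus gives $Y^{(e_1)}_k - NL/2 \geq L y^-_k$, so the front-exit time $\tau_+ := \inf\{k\geq 0 : Y_k\cdot e_1\geq NL\}$ satisfies $\tau_+\leq \mathcal S^-_{N/2}$. Since at $\tau_+$ the walk $X$ has already left $B_{NL}$, we get $\mathcal T^Z_{B_{NL}}\leq\tau_+$ pathwise and hence
\[
E_{z,\omega}(\mathcal T^Z_{B_{NL}})\leq E_0^-(\mathcal S^-_{N/2})=\frac{N/2}{2p_--1},
\]
which gives the upper bound; the middle inequality $E_{z,\omega}(\mathcal T^Z_{B'_{NL}})\leq E_{z,\omega}(\mathcal T^Z_{B_{NL}})$ is trivial from $B'_{NL}\subset B_{NL}$.

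For the lower bound, the symmetric coupling with $y^+$ gives $\tau_+\geq\mathcal S^+_{N/2}$. Setting $\mathcal E := \{\mathcal T^Z_{B'_{NL}} < \mathcal S^+_{N/2}\}$, on $\mathcal E^c$ we have $\mathcal T^Z_{B'_{NL}}\geq\mathcal S^+_{N/2}$ and therefore
\[
E_{z,\omega}(\mathcal T^Z_{B'_{NL}})\geq E_0^+(\mathcal S^+_{N/2}) - E_{z,\omega}\bigl(\mathcal S^+_{N/2}\,\mathbf{1}_{\mathcal E}\bigr).
\]
The main obstacle is showing $E_{z,\omega}(\mathcal S^+_{N/2}\,\mathbf{1}_{\mathcal E}) \leq e^{-c_{22}\epsilon^{-1}}$. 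By Cauchy--Schwarz, and using that $E_0^+((\mathcal S^+_{N/2})^2)$ is only polynomial in $\epsilon^{-1}$, this reduces to proving $P_{z,\omega}(\mathcal E) \leq e^{-c\epsilon^{-1}}$. On $\mathcal E$ the walk must leave $B'_{NL}$ through $\partial_- B_{NL}$ or through some lateral face. The back-exit event forces $y^-$ in its coupling with $Y^{(e_1)}$ to descend by $N$, which by standard large-deviation bounds for biased walks on $\Z$ has probability at most $\exp(-cN(2p_--1))$; using \eqref{eq:cotaGU} to get $\E(G_U[\vec d\cdot e_1](0)) \geq \tfrac{2}{5}d\lambda L^2$ together with $\delta<\eta$ (so that the $\epsilon^{\alpha(d)-2-\delta}$ correction is negligible compared to $\lambda L$), one obtains $N(2p_--1)\geq c\lambda L^4\geq c'\epsilon^{-1}$. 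The lateral-exit event is controlled via Lemma \ref{lemmab} applied at each $W$-step, using that $z$ lies at lateral distance of order $(NL)^3$ from $\partial_l B'_{NL}$ and that the number of slab exits before $\mathcal S^+_{N/2}$ is polynomial in $\epsilon^{-1}$.
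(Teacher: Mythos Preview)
Your construction of the good event $\mathcal G$, the coupling argument for the upper bound, and the Cauchy--Schwarz decomposition for the lower bound all match the paper's proof essentially line by line. The one substantive departure is in how you control $P_{z,\omega}(\mathcal E)$.

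The paper does not attempt to bound $P_{z,\omega}(\mathcal E)$ deterministically for $\omega\in\mathcal G$. Instead it observes that $\{\mathcal T^Z_{B'_{NL}}<\mathcal T^Y_{NL}\}\subseteq\{X_{T_{B'_{NL}}}\notin\partial_+B'_{NL}\}\subseteq\{X_{T_{B(z)}}\notin\partial_+B(z)\}$ for the translated box $B(z)=B+z$ of \eqref{eq:defb}, and then invokes the \emph{annealed} exponential bound \eqref{eq:pc1} from Section~\ref{sec:poly} together with Markov's inequality to produce a \emph{second} good event $\{P_{z,\omega}(X_{T_{B(z)}}\notin\partial_+B(z))<e^{-\frac12 c_2\epsilon^{-1}}\}$ of $\P$-probability at least $1-e^{-\frac12 c_2\epsilon^{-1}}$. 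This single step handles back and lateral exits simultaneously and feeds directly into the Cauchy--Schwarz estimate.

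Your alternative is to argue quenched on $\mathcal G$ itself, splitting into back and lateral exits. The back-exit part via the $y^-$ coupling is fine. The lateral-exit part, however, has a gap as written: Lemma~\ref{lemmab} bounds the probability that \emph{one} slab step starting from a fixed lateral position $a$ reaches $\partial_l B_{NL}$, and that bound degenerates as $a\uparrow 25(NL)^3$. It says nothing about cumulative lateral drift over many $W$-steps, during which $Y_k$ can creep arbitrarily close to $\partial_l B'_{NL}$ before the step at which exit occurs. To make your route work you would need instead to (i) control the tail of $\mathcal S^+_{N/2}$ via a large-deviation bound for the biased walk $y^+$, and (ii) on $\{\mathcal S^+_{N/2}\le M\}$ bound the total $X$-time $W_M=\sum_{j<M}(W_{j+1}-W_j)$ using the exponential moment $E_{x,\omega}(e^{c_{19}T_{U_L}/L^2})\le 2$ underlying Lemma~\ref{lemmab}, then compare $W_M$ against the initial lateral distance $\sim(NL)^3$. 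This can be carried out, but it is substantially more work than ``apply Lemma~\ref{lemmab} at each $W$-step''; the paper's use of \eqref{eq:pc1} plus Markov is the cleaner shortcut.
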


\begin{proof} 
Define the event
\begin{equation}
\label{devent}
\mathcal B:=\bigcap_{x\in B_{NL}}\left\{ \omega \in \Omega : \left|
G_U[\vec{d}\cdot e_1](x,\omega)-\mathbb E(G_U[\vec{d}\cdot e_1](x))\right|\le
 \epsilon^{\alpha(d)-2-\delta}
\right\}.
\end{equation}
Let us observe that for any $\omega \in \mathcal{B}$ we have $p_- \leq \hat{p}(x,\omega)$ for all $x \in B_{NL}$. In particular, since $Y^{(e_1)}$ is coupled to the right with $y^-$, if $Y^{(e_1)}_0 = \frac{NL}{2}$ and $y_0^- =0$ then for any $\omega \in \mathcal{B}$ we have
$$
Y^{(e_1)}_k \ge L y_k^- + \frac{NL}{2}
$$ for all $0 \leq k \leq \mathcal{T}^Z_{B_{NL}}$ so that, in particular, for any $\omega\in\mathcal B$  
$$
\mathcal T^Z_{B_{NL}}\le S^-_{\frac{N}{2}}
$$
and thus
$$
E_{z,\omega}\left(\mathcal T^Z_{B_{NL}}\right)\le E^-_0(S^-_{\frac{N}{2}}).
$$
Similarly, since $Y^{(e_1)}$ is coupled to the right with $y^+$ and $\hat{p}(x,\omega)\leq p_+$ for all $x \in B_{NL}$ when $\omega \in \mathcal{B}$, if $y^+_0=0$ then for any $\omega \in \mathcal{B}$ we have 	
$$
Y_k^{(e_1)} \leq L y_k^+ + \frac{NL}{2}
$$ for all $0 \leq k \leq \mathcal{T}^Z_{B_{NL}}$, so that for any such $\omega$ on the event $\{\mathcal{T}^Y_{NL} = \mathcal{T}^Z_{B'_{NL}}\}$ we have 
$$
\mathcal{T}^Z_{B'_{NL}} \geq S^+_{\frac{N}{2}}.
$$ Therefore, we see that for each $z \in \partial_- B^*_{NL}$
\begin{align}
E_{z,\omega}\left(\mathcal T^Z_{B'_{NL}}\right) &=
E_{z,\omega}\left(\mathcal T^Z_{B'_{NL}}
\mathbbm 1( 
\mathcal T^Z_{B'_{NL}}<
\mathcal T^Y_{NL})\right)
+
E_{z,\omega}\left(\mathcal T^Z_{B'_{NL}}
\mathbbm 1( \mathcal T^Y_{NL}= \mathcal T^Z_{B'_{NL}})\right) \nonumber\\
& \geq 
E_{z,\omega}\left(S^+_{\frac{N}{2}}
\mathbbm 1( \mathcal T^Y_{NL}= \mathcal T^Z_{B'_{NL}})\right) \nonumber\\
&\label{fff} =E^+_{0}\left(S^+_{\frac{N}{2}}\right)
-
E_{z,\omega}\left(S^+_{\frac{N}{2}}
\mathbbm 1( \mathcal T^Z_{B'_{NL}}<\mathcal T^Y_{NL})\right).
\end{align}
Now, by the Cauchy-Schwarz inequality we have that
\begin{align}
E_{z,\omega}\left(S^+_{\frac{N}{2}}
\mathbbm 1( \mathcal T^Z_{B'_{NL}}<\mathcal T^Y_{NL})\right)
 &\leq \sqrt{E^+_{0}\left( \left(S_{\frac{N}{2}}^+\right)^2\right)P_{z,\omega}\left(
X_{T_{B'_{NL}}}\notin \partial_+ B'_{NL}\right)} \nonumber\\
& \leq \sqrt{E_{0}^+\left(\left(S_{\frac{N}{2}}^+\right)^2\right)P_{z,\omega}\left(
	X_{T_{B(z)}}\notin \partial_+ B(z)\right)} \label{ggg}
\end{align} where $B(z):=B+z$ for $B$ as defined in \eqref{eq:defb} and, to obtain the last inequality, we have repeated the same argument used to derive \eqref{eq:pcb} but for $B'_{NL}$ instead of $B_{NL}$ (which still goes through if $L \geq 2$).
On the other hand, using the fact that the sequences $M^\pm =(M^\pm_n)_{n \in \N_0}$ and $N^\pm=(N^\pm_n)_{n \in \N_0}$ given for each $n \in \N_0$ by 
$$
M_n^\pm=y^{\pm}_n-n(2p^\pm -1)
$$
and
$$
N_n^\pm=\left(y^{\pm}_n-n(2p^\pm -1)\right)^2-n(1-(2p^\pm-1)^2)
$$
are all martingales with respect to the natural filtration
generated by their associated random walks, and also that by Proposition \ref{green-estimates} if $\epsilon$ is sufficiently small (depending on $d,\theta,\eta$ and $\delta$)
$$
2p^\pm - 1 = \frac{1}{L} (\E( G_U[\vec{d}\cdot e_1](0)) \pm \epsilon^{\alpha(d)-\eta/2}) > 0
$$ since (LD)$_{\eta,\epsilon}$ is satisfied and $\delta < \eta$, we conclude that
$$
E^\pm_0(S_{\frac{N}{2}}^\pm)=\frac{N/2}{2p^\pm-1},
$$
and 
$$
E^+_0\left(\left(S_{N}^+\right)^2\right)=\frac{(N/2)^2}{(2p^+-1)^2} +\frac{(N/2)}{2p^+-1}(1-(2p^+-1)^2) \leq C_+ N^2
$$ if $\epsilon \in (0,1)$, where $C_+ > 0$ is a constant depending on $p^+$. Inserting these bounds in \eqref{fff} and \eqref{ggg}, we conclude that for $\omega\in\mathcal B$ one has
\begin{equation}
\label{enele}
\frac{N/2}{2p^+-1}
-
\sqrt{C_+N^2
P_{z,\omega}\left(
X_{T_{B(z)}}\notin\partial_+ B(z)\right)}\le E_{z,\omega}\left(\mathcal{T}^Z_{B'_{NL}}\right) \leq 
E_{z,\omega}\left(\mathcal T^Z_{B_{NL}}\right)\le
\frac{N/2}{2p^--1}.
\end{equation}
But, by the proof of \eqref{eq:poly} in Section \ref{sec:poly} and Markov's inequality, we have that
\begin{align}
\mathbb P\left(P_{z,\omega}\left(
X_{T_{B(z)}}\notin\partial_+ B(z)\right)\ge
e^{-\frac{1}{2}c_{2}\epsilon^{-1}}\right)
&\le
e^{\frac{1}{2}c_{2}\epsilon^{-1}}
P_{z}\left( X_{T_{B(z)}}\notin\partial_+ B(z)\right) \nonumber \\
\label{pomega}
&\le \exp\left(-\frac{1}{2}c_{2}\epsilon^{-1}\right),
\end{align} where $c_2=c_2(d,\eta) > 0$ is the constant from \eqref{eq:poly}.
Furthermore, Proposition \ref{control} implies that $\theta$ from \eqref{defL} can be chosen so that for any $\epsilon$ sufficiently small (depending on $d$, $\delta$ and $\theta$)  
\begin{equation}
\label{pbe}
\mathbb P(\mathcal B^c)\le C(d)(NL)^{3(d-1)+1}\exp\left(-c\epsilon^{-\delta}\right)
\end{equation} for some constants $C(d),c>0$.  
Combining the estimates \eqref{pomega} and \eqref{pbe}
with the inequalities in \eqref{enele}, we conclude the proof.
\end{proof}

\subsection{Proof of \eqref{eq:teo6}}

We conclude this section by giving the proof of \eqref{eq:teo6}. The proof has two steps: first, we express the expectation $E_{x,\omega}(T_{B_{NL}})$ for $x \in \partial_- B^*_{NL}$ in terms of the Green's function \mbox{of $Z$} and the quenched expectation of $T_{U_L} \wedge T_{B_{NL}}$, and then combine this with the estimates obtained in the previous subsections to conclude the result. 
The first step is contained in the next lemma.

\begin{lemma}
\label{step1} If we define $\mathcal{Z}:=\{ z \in B_{NL} : z \cdot e_1 = kL \text{ for some }k \in \Z\}$ and the Green's function
$$
g_Z(x,y,\omega):= \sum_{i=0}^\infty \E_{x,\omega}( \mathbbm{1}_{\{y\}}(Z_i)\mathbbm{1}_{\{i < \mathcal{T}^Z_{B_{NL}}\}}),
$$ where $Z$ is the random walk in \eqref{auxiliary-walk2}, then for any $x \in \partial_- B^*_{NL}$ we have that
\begin{equation}
\label{convolution2}
E_{x,\omega}\left(T_{B_{NL}}\right)=
\sum_{z\in\mathcal Z} g_Z(x,z,\omega)E_{z,\omega}\left(T_{U_L(z)}\wedge T_{B_{NL}}\right).
\end{equation} 
\end{lemma}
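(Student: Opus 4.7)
The plan is to view $T_{B_{NL}}$ as a telescoping sum over the stopping times $V_k$ and recognize the resulting expression as a Green function expansion. First I would write the identity
\begin{equation*}
T_{B_{NL}} = \sum_{k=0}^{\infty} (V_{k+1}-V_k)\mathbbm{1}_{\{V_k < T_{B_{NL}}\}},
\end{equation*}
valid $P_{x,\omega}$-a.s. because on $\{V_k \ge T_{B_{NL}}\}$ we have $V_k = V_{k+1} = T_{B_{NL}}$ so the summand vanishes. Moreover the sum is $P_{x,\omega}$-a.s. finite: since the $e_1$-coordinate changes by at most one per step, we have $W_{k+1}-W_k \ge L$ and hence $W_k \ge kL$, so $V_k = T_{B_{NL}}$ for all $k \ge T_{B_{NL}}/L$. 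This guarantees that Fubini applies when swapping $E_{x,\omega}$ with $\sum_k$ below.

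Next I would apply the strong Markov property at the $(\mathcal{F}_n)$-stopping time $V_k$. On $\{V_k < T_{B_{NL}}\}$ we have $V_k = W_k$, so
\begin{equation*}
(V_{k+1}-V_k)\mathbbm{1}_{\{V_k < T_{B_{NL}}\}} = \bigl[(W_{k+1}-W_k)\wedge (T_{B_{NL}}-W_k)\bigr]\mathbbm{1}_{\{W_k < T_{B_{NL}}\}}.
\end{equation*}
Under $P_{z,\omega}$ the first exit time $W_1$ is just $T_{U_L(z)}$. Hence by the strong Markov property
\begin{equation*}
E_{x,\omega}\!\bigl((V_{k+1}-V_k)\mathbbm{1}_{\{V_k < T_{B_{NL}}\}} \,\big|\, \mathcal{F}_{V_k}\bigr) = \mathbbm{1}_{\{V_k < T_{B_{NL}}\}}\, E_{X_{V_k},\omega}\!\bigl(T_{U_L(X_{V_k})} \wedge T_{B_{NL}}\bigr).
\end{equation*}

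Finally I would translate the right-hand side into $Z$-walk quantities. Note that $X_{V_k} = Z_k$ always, and $\{V_k < T_{B_{NL}}\} = \{Z_k \in B_{NL}\} = \{k < \mathcal{T}^Z_{B_{NL}}\}$. Moreover, since $L$ is even by \eqref{defL}, the value $x\cdot e_1 = NL/2 = L\cdot(L^3/2)$ is an integer multiple of $L$ for every $x \in \partial_- B^*_{NL}$, and since $Z_{k+1}\cdot e_1 - Z_k\cdot e_1 \in \{-L, 0, +L\}$, we conclude that $Z_k\cdot e_1 \in L\mathbb{Z}$; hence $Z_k \in \mathcal{Z}$ whenever $Z_k \in B_{NL}$. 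Taking $E_{x,\omega}$, interchanging the sums by Fubini, and partitioning according to the value of $Z_k$ gives
\begin{equation*}
E_{x,\omega}(T_{B_{NL}}) = \sum_{z\in\mathcal{Z}} \left(\sum_{k=0}^{\infty} P_{x,\omega}(Z_k = z,\, k < \mathcal{T}^Z_{B_{NL}})\right) E_{z,\omega}\!\left(T_{U_L(z)}\wedge T_{B_{NL}}\right),
\end{equation*}
and the inner sum over $k$ is precisely $g_Z(x,z,\omega)$. This yields \eqref{convolution2}. There is no substantive obstacle: the identity is a clean additive-functional decomposition, and the only delicate point is the parity check ensuring that $Z_k \in \mathcal{Z}$ (so that the outer sum can be restricted to $\mathcal{Z}$ rather than all of $B_{NL}$), together with correctly identifying the restarted increment $V_{k+1}-V_k$ as $T_{U_L(z)}\wedge T_{B_{NL}}$ under $P_{z,\omega}$.
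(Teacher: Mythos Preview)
Your proof is correct and takes essentially the same approach as the paper: both decompose $T_{B_{NL}}$ into pieces between consecutive stopping times $W_k$ (the paper does this by slicing the occupation sum $\sum_{n<T_{B_{NL}}}\mathbbm{1}_{\{X_n=y\}}$ over the intervals $[W_i,W_{i+1})$, you do it via the telescoping identity for the $V_k$), apply the (strong) Markov property, and identify the restarted piece as $T_{U_L(z)}\wedge T_{B_{NL}}$. The only imprecision is that your increment claim $Z_{k+1}\cdot e_1 - Z_k\cdot e_1 \in\{-L,0,+L\}$ can fail at the step where $Z$ exits $B_{NL}$ laterally, but since you only use $Z_k\in\mathcal Z$ for $k<\mathcal T^Z_{B_{NL}}$, the induction still goes through (for such $k$ one has $V_{k-1}=W_{k-1}$ and $V_k=W_k$, so the increment in the $e_1$-coordinate is exactly $\pm L$).
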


\begin{proof} Note that 
\begin{align*}
E_{x,\omega}\left(T_{B_{NL}}\right)&=
\sum_{y\in B_{NL}}E_{x,\omega}\left(\sum_{n=0}^\infty \mathbbm{1}_{\{y\}}(X_n)\mathbbm{1}_{\{n < T_{B_{NL}}\}}\right)\\
& =
\sum_{y\in B_{NL}}\sum_{i=0}^\infty E_{x,\omega}\left(\sum_{n=W_i}^{W_i-1}\mathbbm{1}_{\{y\}}(X_n)\mathbbm{1}_{\{n < T_{B_{NL}}\}}\right)\\
& = \sum_{y\in B_{NL}}\sum_{i=0}^\infty E_{x,\omega}\left( \mathbbm{1}_{\{W_i < T_{B_{NL}}\}}
E_{X_{W_i},\omega}\left(\sum_{n=0}^{W_1-1}\mathbbm{1}_{\{y\}}(X_n)\mathbbm{1}_{\{n < T_{B_{NL}}\}}\right)\right)\\
&=
\sum_{y\in B_{NL}}\sum_{i=0}^\infty \sum_{z\in B_{NL}}
E_{z,\omega}\left(\sum_{n=0}^{W_1-1}\mathbbm{1}_{\{y\}}(X_n)\mathbbm{1}_{\{n < T_{B_{NL}}\}}\right)
E_{x,\omega}\left(\mathbbm{1}_{\{z\}}(Y_i)\mathbbm{1}_{\{W_i < T_{B_{NL}}\}}\right)\\
&=
\sum_{z\in \mathcal{Z}}g_Z(x,z,\omega)E_{z,\omega}\left( T_{U_L(z)}\land T_{B_{NL}}\right),
\end{align*} where in the third equality we have used the Markov property for $X$ valid under the probability $P_\omega$ and, in the last one, that $Y$ visits only sites in $\mathcal{Z}$ before the time $T_{B_{NL}}$. 
\end{proof}

Now, to continue with the proof let us define the event 
$$
\mathcal A_2:=\bigcap_{z\in \mathcal{Z}}\left\{ \omega \in \Omega : \left|
E_{z,\omega}(T_{U_L(z)})-E_{0}(T_{U_L})\right|\le
 \epsilon^{-\alpha^*(d)-\delta}
\right\},
$$
where
\begin{equation}
\label{epsilond2}
\alpha^*(d):=3-\alpha(d)=
\begin{cases}
0.5&{\rm if}\ d=3\\
0&{\rm if}\ d\ge 4.
\end{cases}
\end{equation}
By Lemma \ref{step1}, Proposition \ref{green-estimates} and \eqref{convolution2} we have for any $x \in \partial_- B^*_{NL}$ and $\omega \in \mathcal{A}_2$ that
\begin{align*}
E_{x,\omega}(T_{B_{NL}}) &\leq \sum_{z \in \mathcal{Z}} g_Z(x,z,\omega) E_{z,\omega}(T_{U_L(z)}) \\
& \leq \sum_{z \in \mathcal{Z}} g_Z(x,z,\omega) (E_{0}(T_{U_L})+\epsilon^{-\alpha^*(d)-\eta})\\
& \leq E_{x,\omega}(\mathcal{T}^Z_{B_{NL}})(E_{0}(T_{U_L})+\epsilon^{-\alpha^*(d)-\delta})\\
& \leq E_{x,\omega}(\mathcal{T}^Z_{B_{NL}})\left( \frac{1}{\lambda}\E(G_U[\vec{d}\cdot e_1](0)) + \frac{c_6}{\lambda}\epsilon \log L  +\epsilon^{-\alpha^*(d)-\delta}\right)
\end{align*} if $\epsilon,\theta \in (0,1)$ are taken such that $L \geq 2$ and $\epsilon L \leq c_5$. In a similar manner,  since for every $z \in \mathcal{Z}$ we have $T_{U_L(z)} \wedge T_{B_{NL}} = T_{U_L(z)} \wedge T_{\partial_l B_{NL}}$, by using also Proposition \ref{time-truncation} we obtain that 
\begin{align*}
E_{x,\omega}(T_{B_{NL}}) &\geq \sum_{z \in \mathcal{Z}\cap B'_{NL}} g_Z(x,z,\omega)E_{z,\omega}\left(T_{U_L(z)}\wedge T_{B_{NL}}\right)\\
& \geq \sum_{z \in \mathcal{Z}\cap B'_{NL}} g_Z(x,z,\omega)(E_{z,\omega}(T_{U_L(z)})-e^{-c_{15}L})\\
& \geq \sum_{z \in \mathcal{Z}\cap B'_{NL}} g_Z(x,z,\omega)\left(E_{0}(T_{U_L})-\epsilon^{-\alpha^*(d)-\delta}-e^{-c_{15}L}\right)
\\
& \geq E_{x,\omega}(\mathcal{T}^Z_{B'_{NL}})\left(E_{0}(T_{U_L})-\epsilon^{-\alpha^*(d)-\delta}-e^{-c_{15}L}\right)\\
& \geq E_{x,\omega}(\mathcal{T}^Z_{B'_{NL}})\left( \frac{1}{\lambda}\E(G_U[\vec{d}\cdot e_1](0)) - \frac{c_6}{\lambda}\epsilon \log L-\epsilon^{-\alpha^*(d)-\delta}-e^{-c_{15}L}\right)
\end{align*} for any $\omega \in \mathcal{A}_2$ provided that  $\epsilon,\theta \in (0,1)$ are taken such that $L \geq 2$, $\epsilon L \leq c_{16}$ and $\epsilon \in (0,\epsilon_0)$, where $\epsilon_0$ is the one from Proposition \ref{time-truncation}. Next, consider the event
$$
\mathcal A_3:=\left\{ \omega \in \Omega :
\frac{N/2}{2p^+-1}-e^{-c_{22}\epsilon^{-1}}
\le E_{x,\omega}\left(\mathcal T^Z_{B'_{NL}}\right)\le E_{x,\omega}\left(\mathcal T^Z_{B_{NL}}\right)\le
\frac{N/2}{2p_--1}\right\}, 
$$ where $p^\pm$ are those defined in \eqref{p-} and \eqref{p+}, respectively. 
Since $2p^{\pm}-1>0$ by (LD)$_{\eta,\epsilon}$, we see that for $\omega \in \mathcal{A}_2 \cap \mathcal{A}_3$ 
\begin{align*}
E_{x,\omega}(T_{B_{NL}})& \leq \frac{N/2}{2p^--1}\left( \frac{1}{\lambda}\E(G_U[\vec{d}\cdot e_1](0)) + \frac{c_6}{\lambda}\epsilon \log L  +\epsilon^{-\alpha^*(d)-\delta}\right)\\
& \leq \frac{NL/2}{\E(G_U[\vec{d}\cdot e_1](0))-\epsilon^{\alpha(d)-2-\delta}}\left( \frac{1}{\lambda}\E(G_U[\vec{d}\cdot e_1](0)) + \frac{c_6}{\lambda}\epsilon \log L  +\epsilon^{-\alpha^*(d)-\delta}\right)\\
& = \frac{NL}{2}\left( \frac{1}{\lambda}\left(1  + \frac{\epsilon^{\alpha(d)-2-\delta}+c_6\epsilon \log L +\lambda \epsilon^{-\alpha^*(d)-\delta}}{\E(G_U[\vec{d}\cdot e_1](0))-\epsilon^{\alpha(d)-2-\delta}}\right)\right).
\end{align*} Furthermore, if $\epsilon$ is chosen sufficiently small (depending on $\eta,\delta$ and $\theta$) so as to guarantee that $L \geq 2$ together with 
$$
\frac{1}{\theta^2}\cdot \epsilon^{\eta-\delta}< \frac{1}{5}d
$$ then by Proposition \ref{green-estimates} we have  $\E(G_U[\vec{d}\cdot e_1](0)) - \epsilon^{\alpha(d)-2-\delta}\geq \frac{2}{5}d\lambda L^2 - \lambda \epsilon^{-2+\eta-\delta}\geq \frac{1}{5}\lambda L^2$, so that 
 \begin{align*}
 \frac{E_{x,\omega}(T_{B_{NL}})}{NL/2} - \frac{1}{\lambda} &\leq \frac{5}{\lambda^2 L^2}\left(\epsilon^{\alpha(d)-2-\delta} + c_6 \epsilon \log L + \lambda \epsilon^{-\alpha^*(d)-\delta}\right)\\
 & \leq \frac{5}{\lambda^2}\left( \frac{1}{\theta^2}\epsilon^{\alpha(d)-\delta} + 2c_6 \frac{\log L}{L^3} + \frac{1}{2d\theta^2}\epsilon^{-\alpha^*(d)+3-\delta}\right)\\
 & \leq \frac{C(d,\theta)}{\lambda^2}\epsilon^{\alpha(d)-\delta}
 \end{align*} if $\epsilon$ is taken sufficiently small depending on $\delta$, where:
 \begin{enumerate}
 	\item [i.] To obtain the second inequality we have used that 
 	$	\theta \epsilon^{-1} \leq L \leq 2\epsilon^{-1}
 	$ whenever $\epsilon <\theta$ and also that the inequality $\lambda \leq \frac{\epsilon}{2d}$ holds in our case since $\P(\Omega_\epsilon)=1$.
 	\item [ii.] For the third inequality we have used that
 	$L^{-3}\log L \leq \theta^{-3} \epsilon^{3-\delta} \leq \theta^{-3} \epsilon^{\alpha(d)-\delta}
 	$ when $\epsilon$ is sufficiently small so as to guarantee that $\epsilon < \theta$ and $\log L \leq \epsilon^{-\delta}$. 
 \end{enumerate} 
 
 By performing also the analogous computation but for the lower bound instead, we conclude that if $\theta,\epsilon$ are chosen appropriately then for any $\omega \in \mathcal{A}_2 \cap \mathcal{A}_3$ and $x \in \partial_- B^*_{NL}$ we have 
 $$
 \left|\frac{E_{x,\omega}(T_{B_{NL}})}{NL/2} - \frac{1}{\lambda}\right| \leq \frac{c_4}{\lambda^2}\epsilon^{\alpha(d)-\delta}.
 $$ We can now finish the proof by using Propositions
\ref{control2} and \ref{exit-time-z} to obtain an exponential upper
bound of the form $e^{-c_{3}\epsilon^{-\delta}}$ for the probability $\mathbb P(\mathcal A_2^c\cup\mathcal A_3^c)$. 

\section{Proof of Theorem \ref{theorem1} (Part II): the renormalization argument}
\label{parti}

We now finish the proof of Theorem \ref{theorem1} by using the results established in Sections \ref{sec:LLN} and \ref{sers}. To conclude, we only need to show the following proposition.

\begin{proposition}
\label{time-limit} If $d \geq 3$ then for any given $\eta > 0$ and $\delta \in (0,\eta)$ there exists $\epsilon_0=\epsilon_0(d,\eta,\delta) > 0$ such that if (LD)$_{\eta,\epsilon}$ holds for $\epsilon \in (0,\epsilon_0)$ then we have $P_0$-a.s. that
\begin{equation}\label{eq:time-limit}
\liminf_{n\to\infty}\frac{E_{0}\left(T_{n}\right)}{n}\ge 
\frac{1}{\lambda}
+\frac{1}{\lambda^2}O_{d,\eta,\delta}\left(\epsilon^{\alpha(d)-\delta}\right).
\end{equation}
\end{proposition}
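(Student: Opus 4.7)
The plan is to combine the two outputs of Theorem \ref{polynomial-satisfied} --- the polynomial decay \eqref{eq:poly} and the seed time estimate \eqref{eq:teo6} --- through a block decomposition of $T_n$ at the renormalization scale $NL/2$. First, \eqref{eq:poly} together with the results of \cite{BDR14} yields condition $(T')$, and hence by Proposition \ref{prop-time} the limit $\lim_n E_0(T_n)/n = 1/(\vec v\cdot e_1)$ exists; since it coincides with the $\liminf$ it suffices to produce the desired lower bound along the subsequence $n_m := m\cdot NL/2$ as $m\to\infty$.

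For each $k\in\{0,\dots,m-1\}$ I would attach to the $k$-th increment $T_{(k+1)NL/2}-T_{kNL/2}$ the shifted seed box $B_k := B_{NL}(y_k)$ with $y_k := X_{T_{kNL/2}} - (NL/2)\,e_1$. By construction $X_{T_{kNL/2}} \in \partial_- B^*_{NL}(y_k)$ and $\partial_+ B_k$ lies exactly on the hyperplane $\{z\cdot e_1 = (k+1)NL/2\}$, so the walk cannot reach this hyperplane before leaving $B_k$. This gives the pathwise bound $T_{(k+1)NL/2}-T_{kNL/2}\ge T_{B_k}\circ\theta_{T_{kNL/2}}$, which by the strong Markov property at $T_{kNL/2}$ yields
\[
E_0(T_{n_m}) \;\ge\; \sum_{k=0}^{m-1}\E\bigl(E_{0,\omega}\bigl(E_{X_{T_{kNL/2}},\omega}(T_{B_k})\bigr)\bigr).
\]

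Next, I would apply \eqref{eq:teo6} together with translation invariance of $\P$: for every deterministic $y$, the event
\[
\mathcal G(y) := \Bigl\{\omega : \inf_{x\in\partial_- B^*_{NL}(y)}E_{x,\omega}(T_{B_{NL}(y)}) \ge \frac{NL/2}{\lambda} - \frac{c_4\,NL/2}{\lambda^2}\epsilon^{\alpha(d)-\delta}\Bigr\}
\]
satisfies $\P(\mathcal G(y)^c) \le e^{-c_3\epsilon^{-\delta}}$. Inserting this lower bound on the event $\mathcal G(y_k)$ into each summand, summing over $k$ and dividing by $n_m$ produces \eqref{eq:time-limit} in the limit $m\to\infty$, provided the total contribution from the complementary ``bad'' boxes is of lower order than $n_m/\lambda$.

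The main obstacle I expect is precisely this last estimate: $B_k$ is random (since $y_k$ is a functional of the walk, hence of $\omega$), so the fixed-box bound in \eqref{eq:teo6} does not directly control $\mathcal G(y_k)^c$. I would handle this by (i) using \eqref{eq:poly} together with the slowdown estimates from \cite{Sz02,B12} to confine, with overwhelming probability, the lateral range explored by the walk up to $T_{n_m}$, and (ii) combining a union bound over this polynomial-in-$m$ set of possible values of $y_k$ with a Cauchy--Schwarz absorption of the residual contribution, the latter relying on the uniform quenched second-moment bound on $T_{B_{NL}}$ provided by Lemma \ref{lemmaa}. Matching the exponentially small bad probability $e^{-c_3\epsilon^{-\delta}}$ against the polynomial cost of the union bound, while simultaneously extracting the correct error term $\epsilon^{\alpha(d)-\delta}/\lambda^2$, is the technical heart of the renormalization and fits into the multi-scale scheme of \cite{BDR14}.
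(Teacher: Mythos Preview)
Your single-scale block decomposition has a genuine gap at the step you yourself flag as the ``main obstacle''. The difficulty is not merely technical: the union bound you propose over the polynomial-in-$m$ set of possible values of $y_k$ cannot be ``matched'' against the bad probability $e^{-c_3\epsilon^{-\delta}}$, because the latter is a \emph{fixed constant} (for fixed $\epsilon$) while the former diverges as $m\to\infty$. Concretely, even after confining $y_k$ to a set of cardinality $C R_m^{d-1}$ with $R_m$ polynomial in $m$, the crude bound
\[
P_0\bigl(\mathcal G(y_k)^c,\ \text{confined}\bigr)\ \le\ \sum_{y} P_0\bigl(y_k=y,\ \mathcal G(y)^c\bigr)\ \le\ C R_m^{d-1}\, e^{-c_3\epsilon^{-\delta}}
\]
still blows up with $m$, so neither $m^{-1}\sum_{k<m}P_0(\mathcal G(y_k)^c)$ nor $P_0(\exists k:\mathcal G(y_k)^c)$ is controlled in the limit. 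The underlying reason is precisely the correlation you identified: the box $B_k=B_{NL}(y_k)$ extends back to level $(k-2)NL/2$, so $\mathcal G(y_k)$ depends on $\omega$ in the very region the walk used to determine $y_k$. No decoupling is available without additional input (control of the Radon--Nikodym density of the environment seen from the particle, or a regeneration argument), and the Cauchy--Schwarz step with Lemma \ref{lemmaa} does not help, since the issue is the \emph{probability} of the bad event, not the size of the contribution on it.

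The paper sidesteps this by replacing your random single-scale boxes with a \emph{deterministic} sequence of boxes $Q_k$ at growing scales $N_k\to\infty$, each having $0$ on the back side of its middle-frontal part. A recursive notion of good box is introduced (a $(k+1)$-box is good if all $k$-sub-boxes intersecting it, save possibly those meeting one exceptional $k$-box, are good). One then proves inductively that good $k$-boxes inherit both the exit-direction estimate (Lemma \ref{poly1}) and the exit-time lower bound (Lemma \ref{lema:18}) with a controlled multiplicative loss $\prod_{j\le k}(1-8a_{j-1}/b_{j-1})^2$, and that $\P(Q_k\text{ bad})\le e^{-c\,2^k}$ (Lemma \ref{poly2}). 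Borel--Cantelli then yields that $\P$-a.s.\ all large $Q_k$ are good, whence $\liminf_k E_{0,\omega}(T_{N'_k})/N'_k$ --- and by Fatou the annealed version --- is bounded below by the desired quantity. The scale parameters $(a_k),(b_k)$ are tuned (conditions (C1)--(C7)) so that the cumulative loss stays $1+O(\epsilon^3)$; this recursive propagation, not the seed-level matching you sketch, is the actual content of the renormalization.
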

Indeed, let us recall from Section \ref{sec:poly} that if our RWRE satisfies (LD)$_{\eta,\epsilon}$ for $\epsilon$ sufficiently small so as to guarantee that $NL \geq M_0$ and $(NL)^{-(15d+5)} \geq e^{-c_2 \epsilon^{-1}}$, where $M_0$ and $c_2$ are respectively the constants from \eqref{eq:defM0} and \eqref{eq:poly}, then the polynomial condition $(P)_{15d+5}$ is satisfied and therefore, \mbox{by Proposition \ref{prop-time},} we have that our RWRE is ballistic with velocity $\vec{v} \in \R^d - \{0\}$ verifying
$$
\lim_{n \rightarrow +\infty} \frac{E_0(T_n)}{n} = \frac{1}{\vec{v} \cdot e_1} > 0.
$$ Together with \eqref{eq:time-limit}, this implies that
$$
\frac{1}{\vec{v} \cdot e_1}\ge
\frac{1}{\lambda}
+\frac{1}{\lambda^2}O_{d,\eta,\delta}\left(\epsilon^{\alpha(d)-\delta}\right).
$$
Taking the reciprocal of this inequality then yields Theorem \ref{theorem1}. Thus, the remainder of the section is devoted to the proof of Proposition \ref{time-limit}.

\subsection{The renormalization scheme}
\label{ren-lb}

The general strategy to prove Proposition \ref{time-limit} will be to apply a renormalization argument similar to the one developed by Berger, Drewitz and Ram\'\i rez in \cite{BDR14} to show that the polynomial condition $(P)_K$ for $K$ sufficiently large implies condition $(T')$ in \cite{Sz02}. We outline the construction of the different scales involved in the argument below.

We start by introducing two sequences $(N_k)_{k \in \N_0}$ and $(N_k')_{k \in \N_0}$ specifying the size of each scale. These sequences will depend on $\epsilon$ and are defined by fixing first
$$
N_0:= NL 
$$ and then for each $k \in \N_0$ setting
$$
N_{k}:= a_k N'_k \hspace{1cm}\text{ and }\hspace{1cm}N'_{k+1}:= b_k N'_k,
$$ where $(a_k)_{k \in \N_0}$ and $(b_k)_{k \in \N_0}$ are two sequences of natural numbers to be chosen appropriately. Observe that, with this definition, for each $k \in \N_0$ we have 
$$
N_{k+1}=\alpha_k N_k
$$ for $\alpha_k:= \frac{a_{k+1}}{a_k}b_k$.
For the renormalization argument to work, we will require $(a_k)_{k \in \N_0}$ and $(b_k)_{k \in \N_0}$ to satisfy the following conditions:
\begin{enumerate}
	\item [C1.] $a_0=2$, i.e. $N'_0 : \frac{NL}{2}$.
	\item [C2.] $(a_k)_{k \in \N_0}$ is increasing.
	\item [C3.] $a_k \leq \frac{1}{22} b_k$ for all $k \in \N_0$, i.e. $N_k \leq \frac{1}{22} N'_{k+1}$ for all $k$.
	\item [C4.] $\sup_{k \in \N_0} \frac{\log \alpha_{k}}{a_k} < +\infty$
	\item [C5.] For each $k \in \N$ one has that
	$$
	\frac{2}{a_{k}} + \frac{1}{12}\cdot \frac{ \log a_{k-1}}{a_{k-1}} + \frac{NL}{\alpha_{k-1}} < \frac{1}{(k+1)^2}.
	$$ 
	\item [C6.] There exists a constant $c_* > 0$ (independent of $k$ and $\epsilon$) such that for all $j \in \N$ 
	$$
	\sum_{i=1}^{j} \log \alpha_{i-1} \leq c_* j^2 \log \epsilon^{-1}.
	$$
	\item [C7.] There exists a constant $c^* > 0$ (independent of $k$ and $\epsilon$) such that 
	$$
	\prod_{k=1}^\infty\left(1 - 8\frac{a_{k-1}}{b_{k-1}}\right) \geq 1 - c^* \epsilon^3.
	$$
\end{enumerate} Notice that, in particular, (C1),(C2) and (C3) together imply that $a_k \leq \alpha_k$ and $\alpha_k \geq 22$ for all $k$. One possible choice of sequences is given for each $k \in \N_0$ by
$$
a_{k+1} := (k+1+K)^3 \hspace{1cm}\text{ and }\hspace{1cm}b_k := a_k (k+1+K)^2,
$$ for $K:=22[\epsilon^{-6}]$. Indeed, (C1), (C2) and (C3) are simple to verify if $\epsilon \in (0,1)$. \mbox{On the other hand,} we have that
$$
\alpha_k = a_{k+1}(k+1+K)^2=(k+1+K)^5
$$ so that (C4) is also satisfied because $\frac{\log (k+1+K)}{k+K} \rightarrow 0$ as $k \rightarrow +\infty$. Moreover, since we have $K \geq 22$ by definition, if $k \in \N$ then 
$$
\frac{2}{a_k} = \frac{1}{(k+K)^2} \cdot \frac{1}{22} < \frac{1}{3} \cdot \frac{1}{(k+1)^2},
$$ 
$$
\frac{1}{12} \cdot \frac{ \log a_{k-1}}{a_{k-1}} \leq \frac{1}{12} \wedge \left( \frac{1}{12} \cdot \frac{1}{(k+K)^2}\cdot \frac{3\log (k+K)}{k+K}\right) \leq \frac{1}{3} \cdot \frac{1}{(k+1)^2}
$$ and 
$$
\frac{NL}{\alpha_{k-1}} \leq \frac{16\epsilon^{-4}}{(k+1+K)^5} \leq \frac{1}{3} \cdot \frac{1}{(k+1)^2}
$$ if $\epsilon$ is sufficiently small so as to guarantee that $\frac{16}{K} \leq \frac{1}{3}$, so that (C5) follows at once. Furthermore, for each $j \in \N$ one has that
$$
\sum_{i=1}^j \log \alpha_{i-1} \leq 5 \sum_{i=1}^j \log (i+K) \leq 5 \sum_{i=1}^j (\log i + \log(K+1)) \leq 5(j^2+j\log(K+1)) \leq 5j^2\log(K+1)
$$ from where (C6) easily follows provided that $\epsilon$ is sufficiently small. Finally, since $\log(1-x) \geq -2x^2$ for $x \leq \frac{1}{2}$, we obtain 
$$
\sum_{k=1}^\infty \log\left(1 - 8 \frac{a_{k-1}}{b_{k-1}}\right) =\sum_{k=1}^\infty \log\left(1 - \frac{8}{(k+K)^2}\right) \geq  \sum_{k=1}^\infty \frac{128}{(k+K)^4} \geq -\frac{128}{22}\cdot \frac{1}{[\epsilon^{-6}]} \cdot \sum_{k=1}^\infty \frac{1}{k^3} \geq -c^* \epsilon^6
$$ for $c^* = \frac{128}{11} \sum_{k=1}^\infty \frac{1}{k^3} $, from which (C7) readily follows.

Next, we introduce the concept of boxes of scale $k \in \N_0$. Given $k \in \N_0$ we say that a set $Q_k \subseteq \Z^d$ is a box of scale $k$ (or simply $k$-\textit{box} to abbreviate) if it is of the form $Q_k=B_{N_k}(x)$ for some $x \in \Z^d$, where for $M \in \N$ the box $B_M(x)$ is defined as in \eqref{beeme}. For any $k$-box $Q_k$ we define its boundaries $\partial_i Q_k$ for $i=+,-,l$ as in Section \eqref{sec:GN}. However, for our current purposes we will need to consider a different definition of its middle-frontal part. Indeed, for any given $k$-box $Q_k = B_{N_k}(x)$ we define its \textit{middle-frontal $k$-part} as
$$
\tilde{Q}_k := \left\{ y \in B_{N_k}(x) : N_k - N'_k \leq (y-x)\cdot e_1 < N_k\,,\,|(y-x)\cdot e_i| < N_k^3 \text{ for }2 \leq i \leq d\right\}
$$ together with its corresponding \textit{back side}
$$
\partial_- \tilde{Q}_k := \left\{ y \in  \tilde{Q}_k : (y-x) \cdot e_1 = N_k - N'_k \right\}.
$$ Observe that for $0$-boxes this definition coincides with the previous one of plain middle-frontal parts. 

For the sequel it will be necessary to introduce for each $k \in \N_0$ the partition \mbox{$\mathcal{C}_{k}=(C_{k}^{(z)})_{z \in \Z^d}$ of $\Z^d$} by middle-frontal $k$-parts defined as
$$
C^{(z)}_k:=\left\{ y \in \Z^d : z_1 N'_k \leq y_1 < (z_1+1) N'_k\,,\,z_i(2N_k^3-1) \leq y_i < (z_i+1)(2N_k^3-1) \text{ for }2 \leq i \leq d\right\}.
$$ Given this partition $\mathcal{C}_k$, for each $x \in \Z^d$ we define
\begin{itemize}
	\item [i.] $z(x)$ as the unique element of $\Z^d$ such that $x \in C^{(z(x))}_k$.
	\item [ii.] $Q_{k}(x)$ as the unique $k$-box having $C^{(z(x))}_{k}$ as its middle-frontal $k$-part.
	\item [iii.] $U_{k}(x)$ as the symmetric slab around $x$ given by
	$$
	U_{k}(x) := \bigcup_{z:|(z-z(x))\cdot e_1| \leq \frac{3}{2}a_{k}-1} C^{(z)}_{k}
	$$ together with its corresponding (inner) boundaries
	$$
	\partial_- U_k(x):= \left\{ y \in U_k(x): y_1 = \left(z(x)-\left(\frac{3}{2}a_{k}-1\right)\right)N'_k \right\}
	$$ and 
	$$
	\partial_+ U_k(x):= \left\{ y \in U_k(x): y_1 =\left(z(x)+\left(\frac{3}{2}a_{k}-1\right)\right)N'_k\right\}.
	$$ Observe that, with this particular choice of boundaries, we have $\partial_- Q_k(x) \subseteq \partial_- U_k(x)$. 
\end{itemize}
Finally, we need to introduce the notion of good and bad $k$-boxes. Given $\omega \in \Omega$, $k \in \N_0$ and $\epsilon > 0$, we will say that: 
\begin{itemize}
	\item [$\bullet$] A $0$-box $Q_0$ is $(\omega,\epsilon)$-\textit{good} if it satisfies the estimates
\begin{equation}
\label{bad-box-0}
\inf_{x\in \tilde Q_0}P_{x,\omega}(X_{T_{{Q_0}}}\in\partial_+Q_0)
\ge 1-e^{-\frac{c_2}{2} \epsilon^{-1}}
\end{equation} and 
\begin{equation}
\label{time-lambda}
\inf_{x\in \partial_-\tilde Q_0}E_{x,\omega}(T_{Q_0})> \left(\frac{1}{\lambda}
-\frac{c_4}{\lambda^2}\epsilon^{\alpha(d)-\delta}\right)N_0',
\end{equation} where $c_2$, $c_4$ are the constants from Theorem \ref{polynomial-satisfied}. Otherwise, we will say that $Q_0$ is $(\omega,\epsilon)$-\textit{bad}. 
\item [$\bullet$] A $(k+1)$-box $Q_{k+1}$ is $(\omega,\epsilon)$-\textit{good} if there exists a $k$-box $Q'_{k}$ such that all $k$-boxes intersecting $Q_{k+1}$ but not $Q'_k$ are necessarily $(\omega,\epsilon)$-good. Otherwise, we will say that $Q_{k+1}$ is $(\omega,\epsilon)$-\textit{bad}.
\end{itemize}

The following lemma, which is a direct consequence of the seed estimates proved in Theorem \ref{polynomial-satisfied}, states that all $0$-boxes are good with overwhelming probability.

\begin{lemma} 
\label{bad0} Given $\eta \in (0,1)$ there exist positive constants $c_{23}$ and $\theta_0$ depending only on $d$ and $\eta$ such that if:
\begin{enumerate}
	\item [i.] The constant $\theta$ from \eqref{defL} is chosen smaller than $\theta_0$,
	\item [ii.] (LD)$_{\eta,\epsilon}$ is satisfied for $\epsilon$ sufficiently small depending only on $d$, $\eta$ and $\theta$,
\end{enumerate}
then for any $0$-box $Q_0$ we have that
$$
\mathbb P(\{ \omega \in \Omega :Q_0 \text{ is $(\omega,\epsilon)$-bad}\})\le e^{-c_{23} N_0^{\frac{\delta}{4}}}.
$$
\end{lemma}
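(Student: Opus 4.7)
The lemma is essentially a translation of Theorem \ref{polynomial-satisfied} into the language of good and bad $0$-boxes, so the strategy is to match the two geometries and then bound separately the probability of each of \eqref{bad-box-0} and \eqref{time-lambda} failing.

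First I would verify the geometric identification. For a $0$-box $Q_0 = B_{N_0}(x)$ with $N_0 = NL$ and, by (C1), $N'_0 = N_0/2$, the definition of the middle-frontal $0$-part $\tilde{Q}_0$ from Section \ref{parti} (with $N_k - N'_k = N'_0 = N_0/2$) coincides with the translate $B^*_{NL}(x)$ introduced in Section \ref{section2}, and similarly $\partial_- \tilde{Q}_0 = \partial_- B^*_{NL}(x)$. By translation invariance of $\P$ it suffices to treat the case $x=0$, so that $Q_0 = B_{NL}$, $\tilde{Q}_0 = B^*_{NL}$, and $\partial_- \tilde{Q}_0 = \partial_- B^*_{NL}$.

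Next I would treat condition \eqref{bad-box-0} by combining \eqref{eq:poly} with a Markov/union argument. For each fixed $y \in B^*_{NL}$, Markov's inequality applied to the nonnegative random variable $P_{y,\omega}(X_{T_{B_{NL}}}\notin\partial_+B_{NL})$ yields
\begin{equation*}
\P\bigl(P_{y,\omega}(X_{T_{B_{NL}}}\notin\partial_+B_{NL}) > e^{-\tfrac{c_2}{2}\epsilon^{-1}}\bigr) \le e^{\tfrac{c_2}{2}\epsilon^{-1}}\,P_y(X_{T_{B_{NL}}}\notin\partial_+B_{NL}) \le e^{-\tfrac{c_2}{2}\epsilon^{-1}},
\end{equation*}
using \eqref{eq:poly} in the last step. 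A union bound over the at most $(NL)^{3d}$ points of $B^*_{NL}$ then shows that the failure of \eqref{bad-box-0} has probability at most $(NL)^{3d}\,e^{-\tfrac{c_2}{2}\epsilon^{-1}}$, and since $NL$ is polynomial in $\epsilon^{-1}$ this is bounded by $e^{-c\,\epsilon^{-1}}$ for some $c>0$ provided $\epsilon$ is small enough.

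For condition \eqref{time-lambda}, the required bound is the lower-bound half of the estimate inside \eqref{eq:teo6}, which already delivers a bound of the form $e^{-c_3\epsilon^{-\delta}}$ on the exceptional event. Summing the two exceptional probabilities gives
\begin{equation*}
\P(\{\omega:Q_0\text{ is }(\omega,\epsilon)\text{-bad}\}) \le e^{-c\,\epsilon^{-1}} + e^{-c_3\epsilon^{-\delta}} \le 2e^{-c_3\epsilon^{-\delta}}.
\end{equation*}
To obtain the stated form $e^{-c_{23}N_0^{\delta/4}}$ I would observe that $N_0 = NL = L^4 \asymp \theta^4\epsilon^{-4}$, so $N_0^{\delta/4}\asymp \epsilon^{-\delta}$ with a constant depending only on $\theta$ (and hence on $d,\eta$), and pick $c_{23}$ small enough to absorb this constant together with the factor of $2$. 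The main (very mild) obstacle is just the bookkeeping: ensuring that the ambient constants $c_2,c_3$ from Theorem \ref{polynomial-satisfied} are compatible with the choice $e^{-c_{23}N_0^{\delta/4}}$ and that $\theta$ and $\epsilon$ are taken in the range where both \eqref{eq:poly} and \eqref{eq:teo6} apply; no new analytic input is needed beyond Theorem \ref{polynomial-satisfied}.
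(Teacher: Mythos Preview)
Your proposal is correct and follows essentially the same approach as the paper: reduce to $Q_0=B_{NL}$ by translation invariance, bound the failure of \eqref{bad-box-0} via Markov's inequality together with \eqref{eq:poly} and a union bound over $\tilde Q_0$, bound the failure of \eqref{time-lambda} directly by \eqref{eq:teo6}, and then convert $\epsilon^{-\delta}$ into $N_0^{\delta/4}$ using $N_0=L^4$. The paper's proof is the same argument with essentially the same bookkeeping.
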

\begin{proof} Notice that, by translation invariance of $\P$, it will suffice to consider the case of $Q_0=B_{NL}$. In this case, \eqref{eq:teo6} implies that the probability of \eqref{time-lambda} not being satisfied is bounded from above by
\begin{equation}
\label{one-step}
e^{-\frac{c_3}{2}N_0^{\frac{\delta}{4}}},
\end{equation} since $N_0^{\frac{\delta}{4}} = L^{\delta} \leq 2^{\delta}\cdot \theta^{\delta} \cdot \epsilon^{-\delta} \leq 2\epsilon^{-\delta}$. 
On the other hand, by Markov's inequality and \eqref{eq:poly} we have
\begin{equation}\label{last-step}
\mathbb P\left(
\sup_{x\in\tilde Q_0}P_{x,\omega}(X_{T_{{Q_0}}}\notin\partial_+Q_0)
> e^{-\frac{c_2}{2}\epsilon^{-1}}\right)
\le e^{\frac{c_2}{2}\epsilon^{-1}}
\sum_{x\in\tilde Q_0}P_{x}(X_{T_{{Q_0}}}\notin\partial_+Q_0) \leq |\tilde{Q}_0| e^{-\frac{c_2}{2}\epsilon^{-1}}.
\end{equation}
Combining \eqref{one-step} with \eqref{last-step} yields the result.
\end{proof} 

Even though the definition of good $k$-box is different for $k \geq 1$, it turns out that such $k$-boxes still satisfy analogues of \eqref{bad-box-0} and \eqref{time-lambda}. The precise estimates are given in Lemmas \ref{poly1} and \ref{poly2} below.

\begin{lemma}
\label{poly1} Given any $\eta \in (0,1)$ there exists $\epsilon_0 > 0$ satisfying that for each $\epsilon \in (0,\epsilon_0)$ there exists a sequence $(d_k)_{k \in \N_0} \subseteq \R_{>0}$ depending on $d, \eta, \delta$ and $\epsilon$ such that for each $k \in \N_0$ the following holds:
\begin{enumerate}
	\item [i.] $d_k \geq \Xi_k d_0$, where $\Xi_k \in (0,1)$ is given by 
	$$
	\Xi_k:= \prod_{j=1}^k \left(1 - \frac{1}{(j+1)^2}\right),
	$$ with the convention that $\prod_{j=1}^0 := 1$.  
	\item [ii.] If $Q_k$ is a $(\omega,\epsilon)$-good $k$-box then 
\begin{equation}
\label{time-good-box0}
\inf_{x\in \tilde Q_k} P_{x,\omega}(X_{T_{{Q_k}}}\in\partial_+Q_k)
\ge 1-e^{-d_k N_k}.
\end{equation}
\end{enumerate}
\end{lemma}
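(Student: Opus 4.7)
The plan is to proceed by induction on $k$. For the base case $k=0$ I would set $d_0 := c_2/(2\epsilon N_0)$ so that $d_0 N_0 = c_2/(2\epsilon)$; since $N_0 = NL$ is of order $\epsilon^{-4}$, this makes $d_0$ of order $\epsilon^3$. The defining estimate \eqref{bad-box-0} for an $(\omega,\epsilon)$-good $0$-box then directly gives
\[
\inf_{x \in \tilde Q_0} P_{x,\omega}(X_{T_{Q_0}} \in \partial_+ Q_0) \geq 1 - e^{-c_2 \epsilon^{-1}/2} = 1 - e^{-d_0 N_0},
\]
and $\Xi_0 = 1$ trivially satisfies $d_0 \geq \Xi_0 d_0$.

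For the inductive step, I would assume the result at scale $k$ with constant $d_k \geq \Xi_k d_0$ and fix an $(\omega,\epsilon)$-good $(k+1)$-box $Q_{k+1}$ with the associated (possibly bad) $k$-box $Q'_k$. For $x \in \tilde Q_{k+1}$, the plan is to analyze the walk through a renormalized macro-walk at scale $k$. More precisely, I would define stopping times $\sigma_0 := 0$ and $\sigma_{j+1} :=$ the first time after $\sigma_j$ at which the walk leaves the slab $U_k(X_{\sigma_j})$, and consider the $e_1$-projection of the walk at the times $(\sigma_j)$. For a macro-step starting at a position whose surrounding $k$-box is disjoint from $Q'_k$ (and is therefore necessarily good by goodness of $Q_{k+1}$), the inductive hypothesis, combined with a uniform-ellipticity detour of bounded length to bring the walk into the middle-frontal part $\tilde Q_k$, will yield probability at least $1 - e^{-d_k N_k}$ of advancing by roughly $\tfrac{3}{2} N_k$ in the $e_1$ direction. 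For macro-steps inside the ``defect column'' containing $Q'_k$, I would fall back on uniform ellipticity together with Lemma \ref{lemmab}-type estimates.

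The walk must advance a distance $N'_{k+1}$ in $e_1$ without either retreating to $\partial_- Q_{k+1}$ (at distance of order $N_{k+1}$) or reaching $\partial_l Q_{k+1}$ (at distance of order $N_{k+1}^3$). The gambler's ruin formula applied to the macro-walk then gives a failure probability of order
\[
\exp\!\Big(-d_k N_k\, \alpha_k \Big(1 - \tfrac{2}{a_{k+1}} - \tfrac{\log a_k}{12\, a_k} - \tfrac{NL}{\alpha_k}\Big)\Big),
\]
where the three correction terms inside the parentheses account respectively for the uniform-ellipticity detour at the start of each macro-step, the entropic overhead of the biased-walk gambler's ruin estimate, and the crude bound used when the walk enters the defect column around $Q'_k$. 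By condition (C5) this bracketed expression is at least $1 - 1/(k+2)^2$, so setting $d_{k+1} := d_k\bigl(1 - 1/(k+2)^2\bigr)$ yields the bound $1 - e^{-d_{k+1} N_{k+1}}$ and, by induction, $d_{k+1} \geq \Xi_{k+1} d_0$.

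The hard part will be the geometric accounting around the defect column: since $Q'_k$ sits in one specific column of $k$-boxes, I will need to show that the walk does not wander laterally into that column more often than what (C3) and (C5) can absorb, even though the walk may perform many macro-steps inside $Q_{k+1}$. Condition (C3), $a_k \leq b_k/22$, is what ensures the defect column is a small fraction of the $(k+1)$-box, while the $NL/\alpha_k$ term in (C5) is precisely what budgets the crude bound one can afford there. A secondary difficulty is that, after a successful macro-step, the walk may land outside $\tilde Q_k$ of the next $k$-box, so the bounded uniform-ellipticity detour mentioned above is needed to rejoin the inductive setup; this is what gives rise to the $2/a_{k+1}$ correction.
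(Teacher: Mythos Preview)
Your overall strategy---induction on $k$, a renormalized macro-walk at scale $k-1$, gambler's ruin combined with condition (C5)---matches the paper. The base case and choice of $d_0$ are identical. But two features of the paper's argument are missing or misidentified in your sketch.

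First, the paper treats the lateral exit $\{X_{T_{Q_k}}\in\partial_l Q_k\}$ and the back exit $\{X_{T_{Q_k}}\in\partial_- Q_k\}$ by \emph{separate} arguments, and only the back exit uses gambler's ruin on the $e_1$-projection. The lateral exit is handled by a counting argument: since $\partial_l Q_k$ is at lateral distance $\sim N_k^3$ and each macro-step moves at most $\sim N_{k-1}^3$ laterally, reaching $\partial_l Q_k$ requires at least $m''_k=\tfrac{7}{25}\alpha_{k-1}^3$ macro-steps, and one then bounds the number $n_-+n_l$ of ``wrong'' exits among those steps by a binomial tail. Your proposal folds everything into the $e_1$-projected gambler's ruin, but that projection is blind to lateral displacement; the walk can exit laterally while the $e_1$-coordinate looks fine. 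You would need either to add the paper's counting argument or to argue separately that the gambler's-ruin game ends (front or back) in far fewer than $\alpha_{k-1}^3$ macro-steps with overwhelming probability.

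Second, your attribution of the $2/a_{k+1}$ correction to a ``uniform-ellipticity detour to bring the walk into $\tilde Q_k$'' is not what happens. The paper builds the macro-walk via the partition $\mathcal C_{k-1}$ and the associated boxes $Q_{k-1}(x)$, which are designed so that $x$ \emph{automatically} lies in the middle-frontal part $\tilde Q_{k-1}(x)$; no detour is ever needed. The $2/a_k$ loss comes instead from the gambler's-ruin exponent: the starting point in $\tilde Q_k$ is at $e_1$-distance at least $N_k-N'_k$ (but possibly not $N_k$) from $\partial_-Q_k$, so one gets $(1-p'_{k-1})^a$ with $a\ge (N_k-2N'_k)/N_{k-1}$ rather than $N_k/N_{k-1}$. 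Likewise the $NL/\alpha_{k-1}$ term arises not from a generic slab estimate but from the factor $\kappa^{-8N_{k-1}}$ paid when the macro-walk is forced across the defect interval $[L_{k-1},R_{k-1}]$ by raw ellipticity; the paper handles this with a careful case analysis (whether $z(x)\cdot e_1$ lies above or below $R_{k-1}$, and whether the defect sits near $l_k$) that your ``fall back on uniform ellipticity'' does not yet capture.
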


\begin{proof} First, observe that if for $k=0$ we take 
	\begin{equation}\label{eq:defd0}
	d_0:= \frac{c_2}{2\epsilon N_0}
	\end{equation} then condition (i) holds trivially since $\Xi_0 = \frac{1}{2}$ and (ii) also holds by definition of $(\omega,\epsilon)$-good $0$-box. Hence, let us assume that $k \geq 1$ and show that \eqref{time-good-box0} is satisfied for any fixed $(\omega,\epsilon)$-good $k$-box $Q_k$. To this end, for each $x \in \tilde{Q}_k$ we write 
\begin{equation}
\label{consider-box}
P_{x,\omega}(X_{T_{Q_k}}\notin\partial_+ Q_k)
\le 
P_{x,\omega}(X_{T_{Q_k}}\in\partial_l Q_k)+P_{x,\omega}(X_{T_{Q_k}}\in\partial_- Q_k).
\end{equation}
We will show that if $\epsilon$ is sufficiently small (not depending on $k$) and there exists $d_{k-1} > 0$ satisfying that:
\begin{enumerate}
	\item [i'.] $d_{k-1} \geq \Xi_{k-1} d_0$,
	\item [ii'.] For any $(\omega,\epsilon)$-good $(k-1)$-box $Q_{k-1}$ and all $y \in \tilde{Q}_{k-1}$  
$$
\max\{P_{y,\omega}(X_{T_{Q_{k-1}}}\in\partial_l Q_{k-1}),P_{y,\omega}(X_{T_{Q_{k-1}}}\in\partial_- Q_{k-1})\} \leq e^{-d_{k-1}N_{k-1}},
$$
\end{enumerate} then there also exists $d_k > 0$ with $d_k \geq \Xi_k d_0$ such that for all $x \in \tilde{Q}_k$ 
\begin{equation}\label{eq:boun1}
\max\{
P_{x,\omega}(X_{T_{Q_{k}}}\in\partial_l Q_{k}),P_{x,\omega}(X_{T_{Q_{k}}}\in\partial_- Q_{k}) \} \leq \frac{1}{2}e^{-d_k N_k}.
\end{equation} From this, an inductive argument using that (i') and (ii') hold for $d_0$ as in \eqref{eq:defd0} will yield the result. We estimate each term on the left-hand side of \eqref{eq:boun1} separately, starting with the leftmost one.

For this purpose, we recall the partition $\mathcal{C}_{k-1}$ introduced in the beginning of this subsection and define a sequence of stopping times $(\kappa_j)_{j \in \N_0}$ by fixing $\kappa_0:=0$ and then for $j \in \N_0$ setting
$$
\kappa_{j+1}:= \inf \{ n > \kappa_j : X_n \notin Q_{k-1}(X_{\kappa_j})\}.
$$ Having defined the sequence $(\kappa_j)_{j \in \N_0}$ we consider the rescaled random walk $Y=(Y_j)_{j \in \N_0}$ given by the formula
\begin{equation}\label{eq:defY}
Y_j:= X_{\kappa_j \wedge T_{Q_k}}.
\end{equation} Now, since $Q_k$ is $(\omega,\epsilon)$-good, there exists a
$(k-1)$-box $Q'_{k-1}$ such that all $(k-1)$-boxes intersecting $Q_k$ but not $Q'_{k-1}$ are also $(\omega,\epsilon)$-good. Define then $\mathcal B_{Q'_{k-1}}$ as the collection of all $(k-1)$-boxes which intersect $Q'_{k-1}$ and also set $\mathcal{Q}'_{k-1}$ as the smallest horizontal slab $S$ of the form
$$
S = \{ z \in \Z^d : \exists\,\,y \in Q'_{k-1} \text{ with }|(z-y) \cdot e_i| < M \text{ for all }2 \leq i \leq d\}
$$ which contains $\mathcal{B}_{Q'_{k-1}}$. Observe that, in particular, any $(k-1)$-box which does not intersect $\mathcal{Q}'_{k-1}$ is necessarily $(\omega,\epsilon)$-good. Next, we define the stopping times $m_1$, $m_2$ and $m_3$ as follows:
\begin{itemize}
	\item [$\bullet$] $m_1$ is the first time that $Y$ reaches a distance larger than $7N_k^3$ from both $\mathcal Q'_{k-1}$ and $\partial_l Q_k$, the lateral sides  of the box $Q_k$.
	\item [$\bullet$] $m_2$ is the first time that $Y$ exits the box $Q_k$.
	\item [$\bullet$] $m_3:=\inf\{j>m_1:Y_j\in \mathcal{Q}'_{k-1}\}$.
\end{itemize}
Note that on the event $\{X_{T_{Q_k}}\in\partial_l Q_k\}$
we have $P_{x,\omega}$-a.s. $m_1<m_2<+\infty$ so
that the stopping time
$$
m':= m_2 \wedge m_3 - m_1
$$
is well-defined. Furthermore, notice that on the event $\{X_{T_{Q_k}}\in\partial_l Q_k\}$ for each $m_1 < j < m'+m_1$ (such $j$ exist because $m'>1$, see \eqref{eq:cotamprim} below) we have that at time $\kappa_j$ our random walk $X$ is exiting $Q_{k-1}(X_{\kappa_{j-1}})$. This box is necessarily good since it cannot intersect $\mathcal{Q}'_{k-1}$, being $j < m_3$. Moreover, $X$ can exit this box $Q_{k-1}(X_{\kappa_{j-1}})$ either through its back, front or lateral sides. Hence, let us define $n_-$, $n_+$ and $n_l$ as the respective number of such back, frontal and lateral exits, i.e. for $i=-,+,l$ define
$$
n_i:= \# \{ m_1 < j < m'+m_1 : X_{\kappa_j} \in \partial_i Q_{k-1}(X_{\kappa_{j-1}})\}.
$$ 
Furthermore, set $n_+^*$ as the number of pairs of consecutive frontal exits, i.e.
$$
n_+^*:= \#\{ m_1 < j < m'+m_1 -1 :  X_{\kappa_{i}} \in \partial_+ Q_{k-1}(X_{\kappa_{j-1}}) \text{ for }i=j,j+1\}.
$$
Note that with any pair of \textit{consecutive} frontal exits the random walk moves at least a distance $N'_{k-1}$ to the right direction $e_1$, since it must necessarily traverse the entire width of some $C^{(z)}_{k-1}$. Similarly, with any back exit the random walk can move at most a distance $\frac{3}{2}N_{k-1}$ to the left in direction $e_1$, which is the width of any $(k-1)$-box. Therefore, since our starting point $x \in \tilde{Q}_k$ is at a distance not greater than $N'_k$ from $\partial_+ Q_k$, we conclude that on the event $\{ X_{T_{Q_k}} \in \partial_l Q_k\}$ one must have
$$
N'_{k-1} \cdot n_+^* - \frac{3}{2}N_{k-1} \cdot n_- \leq N'_k.
$$ On the other hand, by definition of $m_1$ it follows that
\begin{equation}\label{eq:cotamprim}
m' \geq \frac{7N_k^3}{25N_{k-1}^3} = \frac{7}{25} \alpha_{k-1}^3 =:m''_k.
\end{equation} Furthermore, observe that $n_+ + n_- + n_l = m'-1$ and also that $n_+ - n_+^* \leq n_-+n_l$ since $n_+ - n_+^*$ is the number of frontal exits which were followed by a back or lateral exit. Thus, since $N_{k-1} \geq 2N'_{k-1}$ by assumption, from the above considerations we obtain that
$$
\frac{N'_k}{N'_{k-1}} + 3 \frac{N_{k-1}}{N'_{k-1}} \cdot (n_-+n_l) \geq m' -1. 
$$ From here, a straightforward computation using the definition of $N_j$ and $N_j'$ for $j \geq 0$ shows that 
$$
n_- +n_l \geq \frac{1}{3 a_{k-1}} \cdot (m'-m''_k) + M_k
$$ where
$$
M_k:=\frac{1}{3a_{k-1}}\left(\frac{7}{25} \alpha_{k-1}^3- b_{k-1} -1\right) \geq \frac{1}{15 a_{k-1}}\alpha_{k-1}^3
$$ since $b_{k-1} \leq \alpha_{k-1}$ and $1 \leq \alpha_{k-1} \leq \frac{1}{25} \alpha_{k-1}^3$.
Thus, by conditioning on the value of $m'-m''_k$ it follows that 
\begin{align*}
P_{x,\omega}(X_{T_{Q_k}} \in \partial_l Q_k ) &\leq P_{x,\omega}\left( n_-+n_l \geq \frac{1}{3a_{k-1}} \cdot (m'-m''_k) + M_k\right)\\
& \leq \sum_{N \geq 0} P\left(U_N \geq \frac{1}{3a_{k-1}} \cdot N + M_k\right),
\end{align*} where each $U_N$ is a Binomial random variable of parameters $n:=m''_k+N$ and $p_k:=e^{-d_{k-1}N_{k-1}}$. Using the simple bound $P(U_N \geq r) \leq p^r_k 2^{N+m''_k}$ for $r \geq 0$ yields
$$
P_{x,\omega}(X_{T_{Q_k}} \in \partial_l Q_k ) \leq \left[\frac{1}{1-2p_k^{\frac{1}{3a_{k-1}}}}\right] p_k^{M_k} 2^{m''_k} \leq \left[\frac{1}{1-2p_k^{\frac{1}{3a_{k-1}}}}\right] e^{-d_{k-1}N_{k-1}M_k + m''_k\log 2}.
$$ Now, since 
$$
\inf_{k \in \N_0} \Xi_k = \prod_{j=1}^\infty \left(1 -\frac{1}{(j+1)^2}\right) = \frac{1}{2},
$$ it follows that $d_{k-1}N'_{k-1} \geq \frac{1}{4} d_0 N_0$ because one then has $d_{k-1} \geq \Xi_{k-1}d_0 \geq \frac{1}{2} d_0$ and $N'_{k-1}\geq \frac{1}{2}N_0$. Hence, we obtain that 
\begin{equation} \label{eq:cot1}
p_k^{\frac{1}{3a_{k-1}}}= e^{-\frac{1}{3}d_{k-1}N'_{k-1}} \leq e^{-\frac{1}{12}d_0 N_0} \leq e^{-\frac{c_2 }{24} \epsilon^{-1}} 
\end{equation}
and also 
\begin{align}
-d_{k-1}N_{k-1}M_k + m''_k \log 2 &= -d_{k-1}N_{k}\left( \frac{1}{3a_{k-1}\alpha_{k-1}}M_l - \frac{7}{25}\frac{\log 2}{d_{k-1}N_{k-1}}\alpha_{k-1}^2\right) \nonumber \\
& \leq -d_{k-1}N_{k}\left( \left(\frac{1}{15a_{k-1}}-\frac{28}{25} \frac{\log 2}{ d_{0}N_{0}a_{k-1}}\right)\alpha_{k-1}^2\right) \nonumber\\
& \leq -d_{k-1}N_{k}\left( \left(\frac{1}{15} - \frac{56}{25}\frac{\log 2}{ c_2} \epsilon\right)\frac{\alpha_{k-1}^2}{a_{k-1}}\right) \nonumber\\
&  \leq -d_{k-1}N_{k}\left( \left(\frac{1}{15} - \frac{56}{25}\frac{\log 2}{c_2} \epsilon\right)\alpha_{k-1}\right) \label{eq:cot2}
\end{align} since $a_{k-1} \leq \alpha_{k-1}$. Thus, if $\epsilon$ is taken sufficiently small so as to guarantee that 
$$
\frac{1}{1 - 2e^{-\frac{c_2}{24}\epsilon^{-1}}} \leq 2 \hspace{1cm}\text{ and }\hspace{1cm}
\frac{1}{15} - \frac{56}{25}\frac{\log 2}{c_2} \cdot \epsilon \geq \frac{1}{16}
$$ then, since $\alpha_{k-1} \geq 16$ and $\epsilon \in (0,1)$ by construction, we conclude that   
\begin{align}
P_{x,\omega}(X_{T_{Q_k}} \in \partial_l Q_k ) \leq 2e^{-d_{k-1}N_k} &\leq \frac{1}{2}\exp\left\{ -d_{k-1}N_k + \log 4\right\} \nonumber\\
& \leq 
\frac{1}{2}\exp\left\{ -d_{k-1}N_k\left( 1 - \frac{\log 4}{d_{k-1} N_k}\right)\right\} \nonumber \\
& \leq \frac{1}{2}\exp\left\{ -d_{k-1}N_k\left( 1 - \frac{\log 4}{a_{k}} \cdot \frac{1}{d_{k-1}N'_{k}}\right)\right\} \nonumber\\
& \leq \frac{1}{2}\exp\left\{ -d_{k-1}N_k\left( 1 - \frac{\log 4}{a_{k}} \cdot \frac{4}{d_0N_0}\right)\right\} \nonumber\\
& \leq \frac{1}{2}\exp\left\{ -d_{k-1}N_k\left( 1 - \frac{\log 4}{a_{k}} \cdot \epsilon \cdot \frac{8}{c_2}\right)\right\} \nonumber\\ 
& \leq \frac{1}{2} e^{-\hat{d}_k N_k}, \label{eq:parte1cotafin}
\end{align} for $\hat{d}_k > 0$ given by the formula
$$
\hat{d}_k:= d_{k-1}\left(1 - \frac{1}{a_{k}}\right),
$$ provided that $\epsilon$ is also small enough so as to guarantee that
$$
\frac{8 \log 4}{c_2} \cdot \epsilon < 1.
$$
We turn now to the bound of the remaining term in the left-hand side of \eqref{eq:boun1}. Consider once again the partition $\mathcal{C}_{k-1}$ and notice that if $X_0=x \in \tilde{Q}_k$ then, by construction, we have \mbox{$Q_{k-1}(x) \subseteq U_{k-1}(x)$.} We can then define a sequence $Z=(Z_n)_{n \in \N_0} \subseteq \R$ as follows:
\begin{itemize}
	\item [i.] First, define $\kappa'_0:=0$ and for each $j \in \N$ set
	$$
	\kappa'_j:=\inf \{ n > \kappa'_{j-1} : X_n \in \partial_-U_{k-1}(X_{\kappa'_{j-1}}) \cup \partial_+U_{k-1}(X_{\kappa'_{j-1}})\}. 
	$$
	\item [ii.] Having defined the sequence $(\kappa'_j)_{j \in \N_0}$, for each $j \in \N_0$ define $Z_j:=z(X_{\kappa'_j \wedge T_{Q_k}}) \cdot e_1.$
\end{itemize} The main idea behind the construction of $Z$ is that:
\begin{enumerate}
	\item [$\bullet$] $Z$ starts inside the one-dimensional interval $[l_k,r_k]$, where 
	$$
	l_k = \min \{ z \cdot e_1 : C^{(z)}_{k-1} \cap Q_k \neq \emptyset \} \hspace{1cm}\text{ and } \hspace{1cm} r_k = \max \{ z \cdot e_1 : C^{(z)}_{k-1} \cap Q_k \neq \emptyset \},
	$$ and moves inside this interval until the random walk $X$ first exits $Q_k$. Once this happens, $Z$ remains at its current position forever afterwards.
	\item [$\bullet$] Until $X$ first exits $Q_k$, the increments of $Z$ are symmetric, i.e. $Z_{j+1}-Z_j =\pm \left(  \frac{3}{2}a_{k-1}-1\right)$ for all $j$ with $\kappa'_{j+1} < T_{Q_k}$. 
	
	\item [$\bullet$] Given that $X_{\kappa'_{j}} = y \in Q_k$, if $X$ exits $Q_{k-1}(y)$ through its back side then $X_{\kappa'_{j+1}} \in \partial_- U_{k-1}(y)$, so that $Z_{j+1}-Z_k=-\left(\frac{3}{2}a_{k-1}-1\right)$.
\end{enumerate} Thus, it follows that 
\begin{equation}\label{eq:cotaz}
P_{x,\omega}(X_{T_{Q_k}} \in \partial_- Q_k ) \leq P_{x,\omega}( T^Z_{l_k} < \overline{T}^Z_{r_k})
\end{equation} where $T^Z_{l_k}$ and $\overline{T}^Z_{r_k}$ respectively denote the hitting times for $Z$ of the sets $(-\infty,l_k]$ and $(r_k,+\infty)$. \mbox{To bound the} right-hand side of \eqref{eq:cotaz}, we need to obtain a good control over the jumping probabilities of the random walk $Z$. These will depend on whether the corresponding slab $U_{k-1}$ which $Z$ is exiting at each given time contains a $(\omega,\epsilon)$-bad $(k-1)$-box or not. More precisely, since $Q_k$ is $(\omega,\epsilon)$-good we know that there exists some $(k-1)$-box $\overline{Q}'_{k-1}$ such that all $(k-1)$-boxes which intersect $Q_k$ but not $\overline{Q}'_{k-1}$ are necessarily $(\omega,\epsilon)$-good. Define then 
$$
\left\{\begin{array}{l} L_{k-1}:=\min\{ z \cdot e_1 : C^{(z)}_{k-1} \cap  \overline{Q}_{k-1} \neq \emptyset \} -2a_{k-1}
\\
R_{k-1}:=\max\{ z \cdot e_1 : C^{(z)}_{k-1} \cap  \overline{Q}_{k-1} \neq \emptyset\}+2a_{k-1} \end{array}\right.
$$ and observe that, with this definition, if $y \in \Z^d$ satisfies $y \in C^{(z)}_{k-1}$ for some $z=(z_1,\dots,z_d) \in \Z^d$ with $z_1 \notin [L_{k-1},R_{k-1}]$ then all $(k-1)$-boxes contained in the slab $U_{k-1}(y)$ are necessarily good. From this observation and the uniform ellipticity, it follows that the probability of $Z$ jumping right from a given position $z_1 \in [l_k,r_k]$ is bounded from below by 
$$
p_{k-1}(z_1):=\left\{\begin{array}{ll}(1-e^{-d_{k-1}N_{k-1}})^{\frac{3}{2}a_{k-1} - 1} & \text{ if $z_1 \notin [L_{k-1},R_{k-1}]$}\\
\\ 
\kappa^{\frac{3}{2}N_{k-1}} & \text{ if $z_1 \in [L_{k-1},R_{k-1}].$}\end{array}\right.
$$ Hence, if we write $T^Z_{[L_{k-1},R_{k-1}]}$ to denote the hitting time of $[L_{k-1},R_{k-1}]$ and $\Theta_Z:=\{ T^Z_{l_k} < \overline{T}^Z_{r_k}\}$ then we can decompose
\begin{equation}\label{eq:cot3}
P_{x,\omega}(\Theta_Z) = P_{x,\omega}(\Theta_Z \cap \{T^Z_{[L_{k-1},R_{k-1}]}=+\infty\}) + P_{x,\omega}(\Theta_Z \cap \{T^Z_{[L_{k-1},R_{k-1}]}<+\infty\}).
\end{equation} Now, recall that if $(W_j)_{j \in \N_0}$ is a random walk on $\Z$ starting from $0$ with nearest-neighbor jumps which has probability $p \neq \frac{1}{2}$ of jumping right then, given $a,b \in \N$, the probability $E(-a,b,p)$ of exiting the interval $[-a,b]$ through $-a$ is exactly 
$$
E(-a,b,p)=(1-p)^a \cdot \frac{p^b - (1-p)^b}{p^{a+b}-q^{a+b}} \leq \frac{(1-p)^a}{p^{a+b}-(1-p)^{a+b}}=:\overline{E}(-a,b,p).
$$ Furthermore, if $a,b \in \N$ are such that
$$
\frac{N_k - 2N'_k}{N_{k-1}} \leq a \leq 2 \cdot \frac{N_k}{N_{k-1}} \hspace{1cm}\text{ and }\hspace{1cm} a+b \leq 4 \cdot \frac{N_k}{N_{k-1}}
$$ then for $p'_{k-1}:=(1-e^{-d_{k-1}N_{k-1}})^{\frac{3}{2}a_{k-1}-1}$ and $\epsilon$ sufficiently small (but not depending on $k$) one has 
\begin{equation}\label{eq:cotae}
\overline{E}(-a,b,p'_{k-1}) \leq 2e^{-\tilde{d}_k N_k}.
\end{equation} for 
$$
\tilde{d}_k:=d_{k-1}\left( 1 - \frac{2}{a_k} - \frac{1}{12}\cdot \frac{\log a_{k-1}}{a_{k-1}}\right) \geq d_{k-1}\left(1-\frac{1}{(k+1)^2}\right)> 0.
$$ Indeed, by Bernoulli's inequality which states that $(1-p)^n \geq 1-np$ for all $n \in \N$ and $p \in (0,1)$, for $\epsilon$ sufficiently small so as to guarantee that $\frac{32}{c_2} \cdot \epsilon < \frac{1}{12}$ we have that
\begin{align*}
(1-p'_{k-1})^a & \leq \left(\left(\frac{3}{2}a_{k-1}-1\right)e^{-d_{k-1}N_{k-1}}\right)^a\\
& \leq \exp\left\{-ad_{k-1}N_{k-1} - 2a\log a_{k-1}\right\}\\
& \leq \exp\left\{-d_{k-1}N_k\left( a\frac{N_{k-1}}{N_k} + 2a\frac{\log a_{k-1}}{d_{k-1}N_{k}}\right)\right\}\\
& \leq \exp\left\{-d_{k-1}N_k\left( \frac{N_{k}-2N'_k}{N_k} - 4\frac{\log a_{k-1}}{d_{k-1}N_{k-1}}\right)\right\}\\
& \leq \exp\left\{-d_{k-1}N_k\left( 1 - \frac{2}{a_k} - 16\cdot\frac{\log a_{k-1}}{a_{k-1}}\cdot \frac{1}{ d_0N_0}\right)\right\}\\
& \leq \exp\left\{-d_{k-1}N_k\left( 1 - \frac{2}{a_k} - \frac{32}{c_2}\cdot \epsilon \cdot \frac{\log a_{k-1}}{a_{k-1}}\right)\right\} \leq e^{-\tilde{d}_k N_k}
\end{align*} where we use that $\frac{3}{2}\leq a_{k-1}$ in the second line and $d_{k-1}N'_{k-1}\geq \frac{1}{4}d_0N_0$ in the second-to-last one.  
Similarly, by (C4) we can take $\epsilon$  sufficiently small so as to guarantee that 
$$
\frac{32}{c_2}\cdot \epsilon \cdot \sup_{j \in \N} \left(\frac{\log \alpha_{j-1}}{a_{j-1}}\right) < \frac{1}{2},
$$ in which case we have that
\begin{align}
(p'_{k-1})^{a+b}& \geq 1 - 2a_{k-1}(a+b)e^{-d_{k-1}N_{k-1}} \nonumber\\
& \geq 1 - \exp\left\{ -d_{k-1}N_{k-1}\left(1- \frac{\log 2a_{k-1}(a+b)}{d_{k-1}N_{k-1}} \right)\right\}\nonumber\\
& \geq 1 - \exp\left\{ -d_{k-1}N_{k-1}\left(1- \frac{16}{ c_2}\cdot \epsilon \cdot \left(\frac{\log a_{k-1}}{a_{k-1}} + \frac{\log \alpha_{k-1}}{a_{k-1}} \right)\right)\right\}\nonumber\\
& \geq 1 - \exp\left\{ -d_{k-1}N_{k-1}\left(1- \frac{32}{ c_2}\cdot \epsilon \cdot \frac{\log \alpha_{k-1}}{a_{k-1}}\right)\right\}\nonumber\\
& \geq 1 - \exp\left\{-\frac{ c_2}{16}\epsilon^{-1}\right\}, \label{eq:cotafinal}
\end{align} where we have used that $2 \leq a_{k-1}$ and $4 \leq \alpha_{k-1}$ to obtain the third line. Finally, we have
$$
(1-p'_{k-1})^{a+b} \leq 1 -p'_{k-1} \leq \left(\frac{3}{2}a_{k-1}-1\right)e^{-d_{k-1}N_{k-1}} \leq 2a_{k-1}(a+b)e^{-d_{k-1}N_{k-1}} \leq \exp\left\{ -\frac{ c_2}{16}\epsilon^{-1}\right\}
$$ where, for the last inequality, we have used the bound \eqref{eq:cotafinal}. Hence, by choosing $\epsilon$ sufficiently small (independently of $k$) so as to guarantee that 
$$
(p'_{k-1})^{a+b}-(1-p'_{k-1})^{a+b} \geq \frac{1}{2},
$$ we obtain \eqref{eq:cotae}.

With this, from the considerations made above it follows that
$$
P_{x,\omega}(\Theta_Z \cap \{T^Z_{[L_{k-1},R_{k-1}]}=+\infty\}) \leq \overline{E}(-a,b,p'_{k-1})
$$ for 
$$
a:=\left[\frac{(z(x) \cdot e_1) - l_k}{\frac{3}{2}a_{k-1}-1}\right] \hspace{1cm}\text{ and }\hspace{1cm} b:= \left[\frac{r_k - (z(x)\cdot e_1)}{\frac{3}{2}a_{k-1}-1}\right] +1,
$$ where $[\cdot]$ here denotes the (lower) integer part. Recalling that the width in direction $e_1$ of any $C_{k-1}^{(z)}$ is exactly $N'_{k-1}$ and also that $N'_{j-1} \leq N_{j-1} \leq \frac{1}{8}N'_j$ holds for all $j \in \N$, by using the fact that $x \in \tilde{Q}_k$ it is straightforward to check that
$$
\frac{N_k-N'_k}{N_{k-1}} \leq \frac{\frac{3}{2}N_k - N'_k - N'_{k-1} - (\frac{3}{2}N_{k-1}-N'_{k-1})}{\frac{3}{2}N_{k-1} - N'_{k-1}} \leq a \leq \frac{\frac{3}{2}N_k + N'_{k-1}}{\frac{3}{2}N_{k-1}-N'_{k-1}} \leq \frac{N_{k}+N'_{k}}{N_{k-1}} \leq 2\cdot \frac{N_k}{N_{k-1}}
$$ and 
$$ 
a + b \leq a + \frac{N'_{k}+\frac{3}{2}N_{k-1}}{\frac{3}{2}N_{k-1} - N'_{k-1}} \leq 2 \cdot \frac{N_k+N'_k}{N_{k-1}} \leq 4 \cdot \frac{N_k}{N_{k-1}}
$$ so that \eqref{eq:cotae} in this case yields
\begin{equation}\label{eq:fcota1}
P_{x,\omega}(\Theta_Z \cap \{T^Z_{[L_{k-1},R_{k-1}]}=+\infty\}) \leq  2e^{-\tilde{d}_kN_k}.
\end{equation} To bound the remaining term in the right-hand side of \eqref{eq:cot3}, we separate matters into two cases: either $z(x) \cdot e_1 \leq R_{k-1}$ or $z(x) \cdot e_1 > R_{k-1}$. Observe that if $z(x)\cdot e_1 \leq R_{k-1}$ and we define 
$$
\left\{\begin{array}{l}l(x):= \inf \left\{ j \geq 0 : z(x)\cdot e_1 - j\left(\frac{3}{2}a_{k-1}-1\right) < L_{k-1}\right\}<+\infty\\
\\
z_l(x):= z(x)\cdot e_1 -l(x)\left(\frac{3}{2}a_{k-1}-1\right)\end{array}\right.
$$ then $Z$ necessarily visits the site $z_l(x)$ on the event $\Theta_Z \cap \{ T^Z_{[L_{k-1},R_{k-1}]} < +\infty\}$. On the other hand, if $z(x)\cdot e_1 > R_{k-1}$ and we define  
$$
\left\{\begin{array}{l}r(x):= \sup \left\{ j \geq 0 : z(x)\cdot e_1 - j\left(\frac{3}{2}a_{k-1}-1\right) > R_{k-1}\right\}<+\infty\\ \\ z_r(x):=z(x)\cdot e_1 -r(x)\left(\frac{3}{2}a_{k-1}-1\right)\end{array}\right.
$$ then $Z$ necessarily visits the site $z_r(x)$ on the event $\Theta_Z \cap \{ T^Z_{[L_{k-1},R_{k-1}]} < +\infty\}$. In the first case, by the strong Markov property we can bound 
$$
P_{x,\omega}(\Theta_Z \cap \{T^Z_{[L_{k-1},R_{k-1}]}<+\infty\})  \leq P^Z_{z_l(x),\omega} (T^Z_{l_k} < \overline{T}^Z_{r_k}).
$$ where $P^Z_{z_l(x),\omega}$ denotes the quenched law of $Z$ starting from $z_l(x)$. Using the strong Markov property once again, we can check that
$$
P^Z_{z_l(x),\omega} (T^Z_{l_k} < \overline{T}^Z_{r_k}) \leq \frac{P^Z_{z_l(x),\omega}(D^-)}{P^Z_{z_l(x),\omega}(D^- \cup D^+)} \leq  \frac{P^Z_{z_l(x),\omega}(D^-)}{P^Z_{z_l(x),\omega}(D^+)}
$$ where
$$
D^-:=\{ T^Z_{l_k} < H^Z_{z_l(x)} \} \hspace{1cm} \text{ and }\hspace{1cm}D^+:= \{ \overline{T}^Z_{r_k} < H^Z_{z_l(x)}\}
$$ and we define $H^Z_y:=\inf\{ j > 1 : Z_j = y\}$ for each $y \in \Z$. Now, by forcing $Z$ to always jump right, using that $(r_k - R_{k-1})\cdot N'_{k-1} \leq N'_k$ holds whenever $z(x)\cdot e_1 \leq R_{k-1}$ and also that 
$$
|R_{k-1}-L_{k-1}| \leq \frac{3}{2}a_{k-1} +2 + 4a_{k-1} \leq 8a_{k-1}
$$ we obtain  
$$
P^Z_{z_l(x),\omega}(D^+) \geq \kappa^{8N_{k-1}}\left(p'_{k-1}\right)^{\frac{N'_k}{\frac{3}{2}N_{k-1}-N_{k-1}'}} \geq \kappa^{8	N_{k-1}}\left(p'_{k-1}\right)^{\frac{N_k}N_{k-1}} \geq \frac{1}{2}\kappa^{8N_{k-1}},
$$ where we have used \eqref{eq:cotafinal} to obtain the last inequality. On the other hand, by the Markov property at time $j=1$, we have that
$$
P^Z_{z_l(x),\omega}(T^Z_{l_k} < H^Z_{z_l(x)}) \leq \overline{E}(-a',b',p'_{k-1}) 
$$ for
$$
a':= \left[ \frac{z_l(x) - l_k}{\frac{3}{2}a_{k-1}-1}\right] \hspace{1cm}\text{ and }\hspace{1cm}b':=1.
$$ Using the facts that $x \in \tilde{Q}_k$, $z(x)\cdot e_1 \leq R_{k-1}$, $|R_{k-1}-L_{k-1}| \leq 8a_{k-1}$ and $N'_{k-1} \leq N_{k-1} \leq \frac{1}{22}N_k'$, it is easy to check that
$$
\frac{N_k - N'_k}{N_{k-1}} \leq a' \leq 2 \cdot \frac{N_k}{N_{k-1}} \hspace{1cm}\text{ and }\hspace{1cm} a+b \leq 4 \cdot \frac{N_k}{N_{k-1}},
$$ so that \eqref{eq:cotae} immediately yields 
$$
P_{z_l(x),\omega}^Z(D^-) \leq 2e^{-\tilde{d}_k N_k},
$$ and thus 
\begin{equation}\label{eq:fcota2}
P_{x,\omega}(\Theta_Z \cap \{T^Z_{[L_{k-1},R_{k-1}]}<+\infty\}) \leq 4\kappa^{-8N_{k-1}}e^{-\tilde{d}_k N_k}.
\end{equation} It remains only to treat the case in which $z(x) \cdot e_1 > R_{k-1}$. Recall that in this case we had that $Z$ necessarily visits $z_r(x)$ so that, by the strong Markov property, we have
$$
P_{x,\omega}(\Theta_Z \cap \{T^Z_{[L_{k-1},R_{k-1}]}<+\infty\}) \leq P^Z_{z(x) \cdot e_1,\omega} ( T^Z_{z_r(x)} < \overline{T}^Z_{r_k})  \cdot P_{z_r(x),\omega}^Z(T^Z_{l_k} < \overline{T}^Z_{r_k}).
$$ Notice that, by proceeding as in the previous cases, we obtain
$$
P^Z_{z(x) \cdot e_1,\omega} ( T^Z_{z_r(x)} < \overline{T}^Z_{r_k}) \leq \overline{E}(-a'',b,p'_{k-1})
$$ for 
$$
a'':=\left[\frac{(z(x)\cdot e_1)-z_r(x)}{\frac{3}{2}a_{k-1}-1}\right]\hspace{1cm}\text{ and }\hspace{1cm}b:=\left[\frac{r_k-(z(x)\cdot e_1)}{\frac{3}{2}a_{k-1}-1}\right]+1.
$$ Now, we have two options: either $|l_k - z_r(x)| \leq 11a_{k-1}$ or $|l_k - z_r(x)| > 11a_{k-1}$. In the first case, we have that 
$$
a'' \leq a:=\left[\frac{(z(x)\cdot e_1)-l_{k}}{\frac{3}{2}a_{k-1}-1}\right] \leq a'' + 1 + \frac{11a_{k-1}}{\frac{3}{2}a_{k-1}-1} \leq a''+ 9
$$ so that, by the bound previously obtained on $a$ and $a+b$, we conclude that 
$$
\frac{N_k - 2N'_k}{N_{k-1}} \leq a'' \leq 2 \cdot \frac{N_k}{N_{k-1}} \hspace{1cm}\text{ and }\hspace{1cm} a''+b \leq 4 \cdot \frac{N_k}{N_{k-1}},
$$ which implies that 
\begin{equation}\label{eq:fcota3}
P_{x,\omega}(\Theta_Z \cap \{T^Z_{[L_{k-1},R_{k-1}]}<+\infty\}) \leq P^Z_{z(x) \cdot e_1,\omega} ( T^Z_{z_r(x)} < T^Z_{r_k}) \leq \overline{E}(-a'',b,p'_{k-1}) \leq 2e^{-\tilde{d}_k N_k}.
\end{equation} On the other hand, if $|l_k -z_r(x)| > 11a_{k-1}$ then, since $|z_r(x)-z_l(x)|<11a_{k-1}$ holds because $|R_{k-1}-L_{k-1}|\leq 8a_{k-1}$, the walk $Z$ starting from $z_r(x)$ must necessarily visit $z_l(x)$ if it is to reach $(-\infty,l_k]$ before $(r_k,+\infty)$. Therefore, using the strong Markov property we obtain that 
$$
P_{x,\omega}(\Theta_Z \cap \{T^Z_{[L_{k-1},R_{k-1}]}<+\infty\}) \leq P^Z_{z(x) \cdot e_1,\omega} ( T^Z_{z_r(x)} < \overline{T}^Z_{r_k})  \cdot P_{z_l(x),\omega}^Z(T^Z_{l_k} < \overline{T}^Z_{r_k}).
$$ Since it still holds that $1 \leq a''+b \leq a+b \leq 4\cdot \frac{N_k}{N_{k-1}}$ in this case, then 
$$
(p'_{k-1})^{a''+b} - (1-p'_{k-1})^{a''+b} \geq (p'_{k-1})^{4  \cdot\frac{N_k}{N_{k-1}}} - (1-p'_{k-1}) \geq \frac{1}{2} 
$$ so that
$$
P_{z(x),\omega}^Z(T^Z_{z_r(x)}< T^Z_{r_k}) \leq \overline{E}(-a'',b,p'_{k-1}) \leq 2 (1-p'_{k-1})^{a''}.
$$ On the other hand, as before we have
$$ 
P^Z_{z_l(x),\omega} (T^Z_{l_k} < \overline{T}^Z_{r_k}) \leq  \frac{P^Z_{z_l(x),\omega}(D^-)}{P^Z_{z_l(x),\omega}(D^+)}
$$ but now the distance of $z_l(x)$ from the edges $l_k$ and $r_k$ has changed. Indeed, one now has the bounds
$$
P^Z_{z_l(x),\omega}(D^+) \geq \kappa^{8N_{k-1}}(p'_{k-1})^{\left[\frac{r_k-l_k}{\frac{3}{2}a_{k-1}-1}\right]+1} \geq \kappa^{8N_{k-1}}(p'_{k-1})^{4\cdot \frac{N_k}{N_{k-1}}} \geq \frac{1}{2}\kappa^{8N_{k-1}}
$$ and
$$
P^Z_{z_l(x),\omega}(D^-) \leq \overline{E}(-\hat{a},b',p'_{k-1})
$$ for
$$
\hat{a}:=\left[\frac{z_l(x)-l_k}{\frac{3}{2}a_{k-1}-1}\right]\hspace{1cm}\text{ and }\hspace{1cm}b':=1.
$$ Since clearly $\hat{a}+b' \leq a+b \leq 4\cdot\frac{N_k}{N_{k-1}}$ because $z(x)\cdot e_1 \geq z_l(x)$ by definition, we obtain that 
$$
(p'_{k-1})^{\hat{a}+b'}-(1-p'_{k-1})^{{\hat{a}}+b'} \geq \frac{1}{2},
$$ so that 
$$
P^Z_{z_l(x),\omega}(D^-) \leq 2(1-p'_{k-1})^{\hat{a}}.
$$ We conclude that
$$
P_{x,\omega}(\Theta_Z \cap \{T^Z_{[L_{k-1},R_{k-1}]}<+\infty\}) \leq 8\kappa^{-8N_{k-1}}(1-p'_{k-1})^{a''+\hat{a}}.
$$ Now, recalling that $11a_{k-1}>|z_r(x)-z_l(x)|$, we see that
\begin{align*}
4 \cdot \frac{N_k}{N_{k-1}} \geq a''+\hat{a} &\geq \frac{(z(x)\cdot e_1)- z_r(x) +z_l(x) - l_k - 2\left(\frac{3}{2}a_{k-1}-1\right)}{\frac{3}{2}a_{k-1}-1} \\
& \geq \frac{(z(x)\cdot e_1)-l_k-\left(\frac{3}{2}a_{k-1}-1\right)}{\frac{3}{2}a_{k-1}-1} - \frac{z_r(x)-z_l(x)}{\frac{3}{2}a_{k-1}-1} - 1 \\
& \geq \frac{N_k - N'_k}{N_{k-1}} - 9 \\
& \geq \frac{N_k - 2N'_k}{N_{k-1}}
\end{align*} so that
\begin{equation}\label{eq:fcota4}
P_{x,\omega}(\Theta_Z \cap \{T^Z_{[L_{k-1},R_{k-1}]}<+\infty\}) \leq 8\kappa^{-8N_{k-1}}e^{-\tilde{d}_k N_k}. 
\end{equation} In conclusion, gathering \eqref{eq:fcota1},\eqref{eq:fcota2},\eqref{eq:fcota3} and \eqref{eq:fcota4} yields 
$$
P_{x,\omega}(X_{T_{Q_k}} \in \partial_- Q_k) \leq 10 \kappa^{-8N_{k-1}}e^{-\tilde{d}_kN_k} \leq \frac{1}{2} e^{-d'_k N_k},
$$ where 
$$
d'_{k}:= \tilde{d}_k - 8\log 20\kappa^{-1} \cdot \frac{1}{\alpha_{k-1}}.
$$ Together with \eqref{eq:parte1cotafin}, this gives \eqref{eq:boun1} for $d_k:=\min\{\hat{d}_k,d'_k\}$. It only remains to check that $d_k \geq \Xi_k d_0$. To see this, first notice that (C5) implies that
$$
\hat{d}_k = d_{k-1}\left(1 - \frac{1}{a_k}\right) \geq d_{k-1}\left(1 - \frac{2}{a_k}\right) \geq d_{k-1}\left(1-\frac{1}{(k+1)^2}\right) \geq\Xi_{k}d_0
$$ since $d_{k-1}\geq \Xi_{k-1}d_0$. Thus, it will suffice to check that $d'_k \geq \Xi_k d_0$ holds if $\epsilon$ is sufficiently small. This will follow once again from (C5). Indeed, if $\epsilon$ is such that $\frac{32 \log 20\kappa^{-1}}{c_2}\cdot \epsilon < 1$ then we have that
\begin{align*}
d'_k:&= d_{k-1}\left(1-\frac{2}{a_k}-\frac{1}{2}\cdot \frac{\log a_{k-1}}{a_{k-1}} - \frac{8 \log 20 \kappa^{-1}}{d_{k-1}} \cdot \frac{1}{\alpha_{k-1}}\right)\\
& = d_{k-1}\left(1-\frac{2}{a_k}-\frac{1}{12}\cdot \frac{\log a_{k-1}}{a_{k-1}} - \frac{32 \log 20 \kappa^{-1}}{c_2}\cdot \epsilon \cdot \frac{NL}{\alpha_{k-1}}\right)\\
& \geq d_{k-1} \left(1- \frac{1}{(k+1)^2}\right) \geq \Xi_k d_0. 
\end{align*} This shows that $d_k \geq \Xi_k d_0$ and thus concludes the proof.
\end{proof}

\begin{lemma}\label{lema:18}Given any $\eta \in (0,1)$ and $\delta \in (0,\eta)$ there exists $\epsilon_0 = \epsilon_0(d,\eta,\delta) > 0$ such that if $Q_k$ is a $(\omega,\epsilon)$-good $k$-box for some $\epsilon \in (0,\epsilon_0)$ and $k \in \N_0$ then
		\begin{equation}
		\label{time-good-box01}
		\inf_{x\in \partial_- \tilde Q_k} E_{x,\omega}(T_{Q_k}) > \left(\frac{1}{\lambda} - \frac{c_4}{\lambda} \epsilon^{\alpha(d)-\delta}\right) N'_k \left[\prod_{j=1}^{k}\left( 1 - 8\frac{a_{j-1}}{b_{j-1}}\right)\right]^2
		\end{equation} with the convention that $\prod_{j=1}^0 := 1$. 
\end{lemma}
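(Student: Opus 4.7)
I will argue by induction on $k$. The base case $k=0$ is immediate from the definition of $(\omega,\epsilon)$-good $0$-box, since the empty product equals $1$ by convention and \eqref{time-lambda} is precisely the desired bound at scale $0$.

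For the inductive step, suppose the estimate holds at scale $k-1$ and fix a good $k$-box $Q_k$, letting $Q'_{k-1}$ be the (possibly) bad $(k-1)$-sub-box outside of which every $(k-1)$-box meeting $Q_k$ is good. Following the strategy of Lemma \ref{poly1}, I will work with the rescaled walk $Y_j := X_{\kappa_j \wedge T_{Q_k}}$, where $\kappa_0 = 0$ and $\kappa_{j+1}$ is the first time after $\kappa_j$ at which $X$ exits the current $(k-1)$-box $Q_{k-1}(X_{\kappa_j})$. Starting the count at $x \in \partial_- \tilde Q_k$, let $\mathcal{J}$ denote the (random) set of indices $j \ge 1$ such that $Y_{j-1}$ lies in $\partial_- \tilde Q_{k-1}(Y_{j-1})$ for a \emph{good} $(k-1)$-box $Q_{k-1}(Y_{j-1})$. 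Then by the strong Markov property at each $\kappa_{j-1}$ and the inductive hypothesis,
\[
E_{x,\omega}(T_{Q_k}) \;\ge\; E_{x,\omega}\bigl(|\mathcal{J}|\bigr) \cdot \left(\tfrac{1}{\lambda} - \tfrac{c_4}{\lambda^2}\epsilon^{\alpha(d)-\delta}\right) N'_{k-1} \prod_{i=1}^{k-1}\bigl(1 - 8 a_{i-1}/b_{i-1}\bigr)^2.
\]

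The remaining task is to bound $E_{x,\omega}(|\mathcal{J}|)$ from below by $b_{k-1}\bigl(1 - 8 a_{k-1}/b_{k-1}\bigr)^2$. Geometrically, to reach $\partial_+ Q_k$ from $\partial_- \tilde Q_k$ the rescaled walk must cross $b_{k-1} = N'_k/N'_{k-1}$ consecutive $(k-1)$-slabs in direction $e_1$, and at most $O(a_{k-1})$ of these slabs can meet the bad sub-box $Q'_{k-1}$; thus at least $b_{k-1} - O(a_{k-1})$ forced crossings occur inside good $(k-1)$-boxes. The frontal-exit estimate \eqref{time-good-box0} of Lemma \ref{poly1} guarantees that each such crossing exits through the front (and so lands in $\partial_- \tilde Q_{k-1}$ of the next good box, feeding $\mathcal{J}$) except on an event of probability $\le b_{k-1} e^{-d_{k-1}N_{k-1}}$, which is superexponentially small in $\epsilon^{-1}$ in view of the bound $d_{k-1}N_{k-1} \ge \tfrac{c_2}{4}\epsilon^{-1}$ established in the proof of Lemma \ref{poly1}; uniform ellipticity handles the forced detour around $Q'_{k-1}$. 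A second factor $(1 - 8 a_{k-1}/b_{k-1})$ appears when converting the geometric count of forced good-box traversals into a lower bound on $E_{x,\omega}(|\mathcal{J}|)$, absorbing the additional slabs wasted on occasional back/lateral excursions of $Y$ inside good $(k-1)$-boxes. Combining these estimates and using $b_{k-1} N'_{k-1} = N'_k$ closes the induction.

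The main obstacle is the careful accounting for the $(1 - 8 a_{k-1}/b_{k-1})^2$ correction: one has to ensure that the $O(a_{k-1})$ blocked slabs around $Q'_{k-1}$, together with the rare back/lateral exits from good $(k-1)$-boxes (each suppressed by $e^{-d_{k-1}N_{k-1}}$ via Lemma \ref{poly1}), cost only this multiplicative factor in the expected count of usable excursions, and that the strong Markov argument genuinely delivers independent applications of the inductive hypothesis at each retained entry of $\partial_- \tilde Q_{k-1}$ rather than accumulating multiplicative error across the $\sim b_{k-1}$ sub-scales. The sequence conditions (C1)--(C7), and especially (C7), are tailored exactly so that the infinite product $\prod_{j\ge 1}(1 - 8 a_{j-1}/b_{j-1})^2 \ge 1 - O(\epsilon^3)$ stays bounded away from $0$, ensuring an $\epsilon$-uniform loss at every scale.
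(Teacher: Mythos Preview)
Your inductive skeleton and the use of strong Markov are correct and match the paper, but the time decomposition you choose differs from the paper's and makes the key counting step less transparent. The paper does \emph{not} reuse the box-exit times $\kappa_j$ from Lemma~\ref{poly1}; instead it introduces level-crossing times
\[
O_j:=\inf\{\,n\ge 0:\ (X_n-x)\cdot e_1=jN'_{k-1}\,\}\wedge T_{Q_k},\qquad j=0,\dots,b_{k-1},
\]
so that $T_{Q_k}=\sum_{j=1}^{b_{k-1}}(O_j-O_{j-1})$ splits into exactly $b_{k-1}$ pieces. The point $Y_{j-1}:=X_{O_{j-1}}$ then lies on a prescribed hyperplane, and one chooses a \emph{translated} $(k-1)$-box $\hat Q_{k-1}(Y_{j-1})$ centred so that $Y_{j-1}\in\partial_-\tilde{\hat Q}_{k-1}(Y_{j-1})$ holds deterministically. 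On the single event $\{X_{T_{Q'_k}}\in\partial_+Q'_k\}$ (with $Q'_k$ the box $Q_k$ shrunk by a $25N_{k-1}^3$ lateral margin) every $\hat Q_{k-1}(Y_{j-1})$ fits inside $Q_k$, at most $8a_{k-1}$ of them can meet the bad sub-box, and each remaining good one contributes the inductive lower bound by strong Markov. This gives at once
\[
E_{x,\omega}(T_{Q_k})\ \ge\ (\text{inductive bound at scale }k-1)\cdot(b_{k-1}-8a_{k-1})\cdot P_{x,\omega}\bigl(X_{T_{Q'_k}}\in\partial_+Q'_k\bigr),
\]
one factor $(1-8a_{k-1}/b_{k-1})$ coming from the dropped bad terms, the second from the frontal-exit probability for $Q'_k$, which Lemma~\ref{poly1} shows is at least $1-e^{-\frac14 d_0N_k}\ge 1-8a_{k-1}/b_{k-1}$.

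With your choice of $\kappa_j$, the point $Y_{j-1}$ lands in $\partial_-\tilde Q_{k-1}(Y_{j-1})$ only \emph{after a frontal exit} from the previous partition box (and $Y_0=x$ need not be so aligned at all), so membership in $\mathcal J$ is really governed by a chain of frontal exits --- precisely what the paper packages into the single event $\{X_{T_{Q'_k}}\in\partial_+Q'_k\}$. Your bound $E_{x,\omega}(|\mathcal J|)\ge b_{k-1}(1-8a_{k-1}/b_{k-1})^2$ is the right target, but the sentence ``absorbing the additional slabs wasted on occasional back/lateral excursions'' does not actually produce the second factor; in the paper it is simply the probability of one favourable event, with no excursion bookkeeping needed. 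The level-crossing decomposition buys two things your approach has to recover probabilistically: a deterministic count $b_{k-1}$ of sub-intervals, and automatic alignment of each $Y_{j-1}$ with the back side of a $(k-1)$ middle-frontal part.
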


\begin{proof}
We will prove \eqref{time-good-box01} by induction on $k \in \N_0$. Notice that \eqref{time-good-box01} holds for $k=0$ by definition of $(\omega,\epsilon)$-good $0$-box. Thus, let us assume that $k \geq 1$ and that \eqref{time-good-box01} holds for $(\omega,\epsilon)$-good $(k-1)$-boxes. Consider a $(\omega,\epsilon)$-good $k$-box $Q_k$ and let $x \in \partial_- \tilde{Q}_k$. Observe that if for $j=0,\dots,b_{k-1}$ we define the stopping times
$$
O_{j}:=\inf\{ n \in \N_0 : (X_n - X_0) \cdot e_1 = jN'_{k-1}\} \wedge  T_{Q_k}
$$ then $T_{Q_k}= \sum_{j=1}^{b_{k-1}} O_{j}-O_{j-1}$. Furthermore, if for each $j$ we define $Y_j:=X_{T_{O_j}}$ then it follows from the strong Markov property that
\begin{align}
E_{x,\omega}(T_{Q_k}) &= \sum_{j=1}^{b_{k-1}} E_{x,\omega}(O_j-O_{j-1}) \nonumber\\
&\geq \sum_{j=1}^{b_{k-1}} E_{x,\omega}((O_j-O_{j-1})\mathbbm{1}_{\{O_{j-1}<T_{Q_{k}}\}}) \nonumber\\
& \geq \sum_{j=1}^{b_{k-1}}E_{x,\omega}(E_{Y_{j-1},\omega}(O_1)\mathbbm{1}_{\{O_{j-1}<T_{Q_{k}}\,,\,d(Y_j,\partial_l Q_k) > 25N_{k-1}^3\}})\nonumber\\
& \geq \sum_{j=1}^{b_{k-1}}E_{x,\omega}(E_{Y_{j-1},\omega}(T_{\hat{Q}_{k-1}(Y_{j-1})})\mathbbm{1}_{\{X_{T_{Q'_k}} \in \partial_+ Q'_k\}})\label{eq:cotatfinal},
\end{align} where $d(\cdot,\partial_l Q_k)$ denotes the distance to the lateral side $\partial_l Q_k$ and we define the box $Q'_k$ as 
$$
Q'_k:=\{y \in Q_k : d(y,\partial_l Q_k) > 25N_{k-1}^3\},
$$ together with its frontal side
$$
\partial_+ Q'_k := \partial_+ Q_k \cap Q'_k
$$ and the $(k-1)$-box $\hat{Q}_{k-1}(y)$ for any $y \in \Z^d$ through the formula 
$$
\hat{Q}_{k-1}(y):=B_{N_{k-1}}(y-(N_{k-1}+N'_{k-1})e_1).
$$ 
Observe that if $Y_j \in Q'_{k}$ then $\hat{Q}_{k-1}(Y_j) \subseteq Q_k$ so that $E_{Y_j,\omega}(O_1)\geq E_{Y_j,\omega}(T_{\hat{Q}_{k-1}(Y_j)})$, which explains how we obtained \eqref{eq:cotatfinal}. Now, since there can be at most $|R_{k-1}-L_{k-1}| \leq 8a_{k-1}$ boxes of the form $\hat{Q}_{k-1}(Y_{j-1})$ for $j=1,\dots,b_{k-1}$ which are $(\omega,\epsilon)$-bad, it follows from the inductive hypothesis that
\begin{equation}\label{eq:eqcfinal}
E_{x,\omega}(T_{Q_k}) \geq  \left(\frac{1}{\lambda} - \frac{c_4}{\lambda} \epsilon^{\alpha(d)-\delta}\right) N'_k \left[\prod_{j=1}^{k-1}\left( 1 - 8\frac{a_{j-1}}{b_{j-1}}\right)\right]^2\left( 1 - 8\frac{a_{k-1}}{b_{k-1}}\right)P_{x,\omega}\left(X_{T_{Q'_k}} \in \partial_+ Q'_k\right).
\end{equation} But, by performing a careful inspection of the proof of Lemma \ref{poly1}, one can show that
$$
P_{x,\omega}\left(X_{T_{Q'_k}} \in \partial_+ Q'_k\right) \geq 1 - e^{-\frac{1}{4}d_0N_k}
$$ so that, using that $e^{-x} \leq \frac{1}{x}$ for $x \geq 1$ and also that $N_k \geq N'_k \geq b_{k-1}N'_{k-1} \geq \frac{b_{k-1}}{a_{k-1}}N_{k-1} \geq \frac{b_{k-1}}{a_{k-1}}N_0$, for $\epsilon < c_2$ we obtain 
\begin{align*}
P_{x,\omega}\left(X_{T_{Q'_k}} \in \partial_+ Q'_k\right)& \geq 1 - e^{-\frac{1}{4}d_0 N_k} \\
& \geq 1 - \frac{4}{d_0N_k} \\
& \geq 1- \frac{4}{d_0N_0} \cdot \frac{a_{k-1}}{b_{k-1}}\\
& = 1 - \frac{8}{c_2} \cdot \epsilon \cdot \frac{a_{k-1}}{b_{k-1}} \geq 1 - 8 \frac{a_{k-1}}{b_{k-1}}
\end{align*} which, combined with \eqref{eq:eqcfinal}, yields \eqref{time-good-box01}.
\end{proof}

Finally, we need the following estimate concerning the probability of a $k$-box being $(\omega,\epsilon)$-bad.

\begin{lemma}
\label{poly2} Given $\eta \in (0,1)$ and $\delta \in (0,\eta)$ there exists $\theta_0$ depending only on $d,\eta$ and $\delta$ such that if:
\begin{enumerate}
	\item [i.] The constant $\theta$ from \eqref{defL} is chosen smaller than $\theta_0$,
	\item [ii.] (LD)$_{\eta,\epsilon}$ is satisfied for $\epsilon$ sufficiently small depending only on $d,\eta,\delta$ and $\theta$,
\end{enumerate}
then there exists $c_{24}=c_{24}(d,\eta,\delta,\theta,\epsilon)$ such that for all $k \in \N_0$ and any $k$-box $Q_k$ one has
$$
\mathbb P(\{ \omega \in \Omega :Q_k \text{ is $(\omega,\epsilon)$-bad}\})\le e^{-c_{24}2^k}.
$$
\end{lemma}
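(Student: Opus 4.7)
The plan is to argue by induction on $k \in \N_0$. The base case $k=0$ is exactly Lemma \ref{bad0}, noting that $N_0^{\delta/4} \sim \epsilon^{-\delta}$ is bounded below by a positive constant depending on $\eta,\delta,\theta,\epsilon$.

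For the inductive step, I would first unfold the definition of a bad $(k+1)$-box: $Q_{k+1}$ is $(\omega,\epsilon)$-bad if and only if there exist two bad $k$-boxes $Q^{(1)}_k,Q^{(2)}_k$ intersecting $Q_{k+1}$ that cannot simultaneously be intersected by any single $k$-box. Since a $k$-box has dimensions of order $N_k$ in direction $e_1$ and $N_k^3$ in lateral directions, this latter property forces such a pair to be well-separated. Invoking condition (C3), which gives $N_{k-1} \ll N_k$, the small thickenings on which the events $\{Q^{(i)}_k \text{ is }(\omega,\epsilon)\text{-bad}\}$ depend (through the recursive definition of goodness) will remain disjoint for such a pair, so by the i.i.d.\ nature of $\mathbb P$ these two events are independent. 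Coupled with a union bound over the at most $C(d)\alpha_k^{C'(d)}$ such pairs of $k$-boxes inside $Q_{k+1}$, this yields the recursion
$$
p_{k+1} \le C(d)\alpha_k^{C'(d)} p_k^2, \qquad \text{where } p_k := \mathbb P(\{\omega \in \Omega : Q_k \text{ is }(\omega,\epsilon)\text{-bad}\}).
$$

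Writing $p_k = e^{-a_k}$, this becomes $a_{k+1} \ge 2a_k - C'(d)\log\alpha_k - \log C(d)$, and iterating it gives
$$
a_k \ge 2^k\left(a_0 - \sum_{j=1}^{k} 2^{-j}\bigl(C'(d)\log\alpha_{j-1} + \log C(d)\bigr)\right).
$$
For the explicit choice $\alpha_j=(j+1+K)^5$ with $K=22[\epsilon^{-6}]$, the bound $\log(j+K)\le \log K + \log(j+1)$ yields that the series $\sum_{j=1}^\infty 2^{-j}\log\alpha_{j-1}$ is of order $O(\log K) = O(\log\epsilon^{-1})$. Since $a_0 \ge c_{23}N_0^{\delta/4} \sim \epsilon^{-\delta}$ dominates $\log\epsilon^{-1}$ once $\epsilon$ is sufficiently small, one concludes $a_k \ge (a_0/2)2^k$, giving the desired bound with $c_{24} := a_0/2$.

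The main obstacle will be rigorously establishing the independence claim. One needs to verify that the dependence region of the event $\{Q_k \text{ is }(\omega,\epsilon)\text{-bad}\}$ is contained in a modest thickening of $Q_k$ of order $\sum_{j<k} N_j = O(N_{k-1})$, and that when $Q^{(1)}_k,Q^{(2)}_k$ are separated enough that no single $k$-box meets both, these thickenings remain disjoint. This amounts to a straightforward but somewhat tedious induction on $k$ tracking the spatial footprint of the recursive definition of goodness; the key point is that (C3) ensures the thickening at scale $k$ is negligible compared to the $k$-box itself.
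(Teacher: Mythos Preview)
Your approach is essentially the same as the paper's: induction on $k$ with a recursion of the form $p_{k+1}\le(\text{combinatorial factor})\,p_k^2$, and then control of the resulting series $\sum_j 2^{-j}\log(\text{factor}_j)$ against the seed $a_0\sim\epsilon^{-\delta}$.

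One correction: your combinatorial factor $C(d)\alpha_k^{C'(d)}$ is too small. A $k$-box can be centred at \emph{any} lattice point, so the number of $k$-boxes intersecting $Q_{k+1}$ is of order $N_{k+1}^{3d-2}$ (the volume of $Q_{k+1}$), not a power of $\alpha_k$. The paper accordingly uses the bound $(2N_k)^{6d}$ on the number of pairs and tracks $m_k:=c_{23}N_0^{\delta/4}-12d\sum_{j\le k}2^{-j}\log N_j$. This does not damage your argument: writing $\log N_j=\log N_0+\sum_{i\le j}\log\alpha_{i-1}$ and invoking (C6) still gives $\sum_j 2^{-j}\log N_j=O(\log\epsilon^{-1})$, which is dominated by $a_0$ for small $\epsilon$, exactly as you compute.

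On the other hand, your treatment of independence is more careful than the paper's. The paper simply asserts that two \emph{disjoint} bad $(k-1)$-boxes give independent events ``by the product structure of $\mathbb P$''; but badness at level $k-1$ depends on a thickening of the box by $O(N_{k-2})$, so mere disjointness does not suffice. Your observation that $Q_{k+1}$ bad forces two bad $k$-boxes that \emph{no single $k$-box can cover} (a Helly-type statement for axis-aligned boxes) yields a separation of order $N_k$, comfortably absorbing the $O(N_{k-1})$ thickening via (C3). This is the right way to close the gap.
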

\begin{proof} For each $k \in \N_0$ and $\epsilon > 0$ define
$$
q_k(\epsilon):=\P(\{ \omega: Q_k \text{ is }(\omega,\epsilon)\text{-bad}\})
$$ Notice that $q_k$ does not depend on the particular choice of $Q_k$ due to the translation \mbox{invariance of $\P$.} We will show by induction on $k \in \N_0$ that 
\begin{equation}\label{eq:cotapbad}
q_{k} \leq e^{-m_k 2^{k}}
\end{equation} for $m_k$ given by 
$$
m_k:= c_{23}N_0^{\frac{\delta}{4}} - 12d \sum_{j=1}^{k} \frac{\log N_j}{2^j}
$$ with the convention that $\sum_{j=1}^0 := 0$. From \eqref{eq:cotapbad}, the result will follow once we show that $\inf_{k} m_{k} > 0$.

First, observe that \eqref{eq:cotapbad} holds for $k=0$ by Lemma \eqref{bad0}. Therefore, let us assume that $k \geq 1$ and \eqref{eq:cotapbad} holds for $k-1$. Notice that if $Q_k$ is $(\omega,\epsilon)$-bad then necessarily there must be at least two $(\omega,\epsilon)$-bad $(k-1)$-boxes which intersect $Q_k$ but not each other. Since the number of $(k-1)$-boxes which can intersect $Q_k$ is at most 
$$
\frac{3}{2}N_k \cdot \left( 50 N_k^3 \right)^{d-1} \leq (2N_k)^{6d},
$$ then by the union bound and the product structure of $\P$ we conclude that
$$
q_k \leq (2N_k)^{6d} q_{k-1}^2 \leq \exp\left\{ 6d \log 2N_{k} - m_{k-1}2^{k}\right\} \leq e^{-m_k 2^k}.
$$ Thus, it only remains to check that 
\begin{equation}\label{eq:mk0}
\inf_k m_k = c_{23}N_0^{\frac{\delta}{4}} - 12d \sum_{j=1}^{\infty} \frac{\log N_j}{2^j}> 0. 
\end{equation} But notice that by (C6) we have that
$$
\sum_{j=1}^{\infty} \frac{\log N_j}{2^j} \leq \log N_0 + \sum_{j=1}^{\infty} \left(\frac{1}{2^j}\sum_{i=1}^j \log \alpha_{i-1}\right) \leq c \log \epsilon^{-1}
$$ for some constant $c > 0$, from where \eqref{eq:mk0} follows if $\epsilon$ sufficiently small (depending on $\delta$ and $\theta$). 
\end{proof}

Let us now see how to deduce Proposition \ref{time-limit} from
Lemmas \ref{lema:18} and \ref{poly2}. For each $k \in \N_0$ consider the $k$-box given by
$$
Q_k:=\left(-\frac{3}{2}N_k +N'_k, N'_k\right) \times \left(-25N_k^3,25N_k^3\right)^{d-1}.
$$ Using the probability estimate on Lemma \ref{poly2}, the Borel-Cantelli lemma then implies that if $\epsilon,\theta$ are chosen appropriately small then for $\P$-almost every $\omega$ the boxes $Q_k$ are all $(\omega,\epsilon)$-good except for a finite amount of them. In particular, by Lemma \ref{lema:18} we have that for $\P$-almost every $\omega$
$$
\liminf_{k \rightarrow +\infty} \frac{E_{0,\omega}(T_{N'_k})}{N'_k} \geq \left(\frac{1}{\lambda} - \frac{c_4}{\lambda} \epsilon^{\alpha(d)-\delta}\right) \left[\prod_{j=1}^{\infty}\left( 1 - 8\frac{a_{j-1}}{b_{j-1}}\right)\right]^2
$$ By Fatou's lemma, the former implies that 
$$
\liminf_{k \rightarrow +\infty} \frac{E_{0}(T_{N'_k})}{N'_k} \geq \left(\frac{1}{\lambda} - \frac{c_4}{\lambda} \epsilon^{\alpha(d)-\delta}\right) \left[\prod_{j=1}^{\infty}\left( 1 - 8\frac{a_{j-1}}{b_{j-1}}\right)\right]^2
$$ which in turn, since $\lim_{n \rightarrow +\infty} \frac{E_0(T_n)}{n}$ exists by Proposition \ref{prop-time}, yields that 
$$
\liminf_{n \rightarrow +\infty} \frac{E_0(T_n)}{n} \geq \left(\frac{1}{\lambda} - \frac{c_4}{\lambda} \epsilon^{\alpha(d)-\delta}\right) \left[\prod_{j=1}^{\infty}\left( 1 - 8\frac{a_{j-1}}{b_{j-1}}\right)\right]^2.
$$ Recalling now that by (C7) we have
$$
\prod_{j=1}^{\infty}\left( 1 - 8\frac{a_{j-1}}{b_{j-1}}\right) = 1 + O(\epsilon^3),
$$ we conclude the result.

\end{document}